\newcounter{braid}
\newcounter{strands}
\DeclareMathAlphabet{\bsf}{OT1}{cmss}{bx}{n}
\def\cross{%
  \@ifnextchar^{\message{Got sup}\cross@sup}{\cross@sub}}
\def\cross@sup^#1_#2{\render@cross{#2}{#1}}
\def\cross@sub_#1{\@ifnextchar^{\cross@@sub{#1}}{\render@cross{#1}{1}}}
\def\cross@@sub#1^#2{\render@cross{#1}{#2}}
\def\render@cross#1#2{
  \def\strand{#1}
  \def\crossing{#2}
  \pgfmathsetmacro{\cross@y}{-\value{braid}*\braid@h}
  \pgfmathtruncatemacro{\nextstrand}{#1+1}
  \foreach \thread in {1,...,\value{strands}}
  {
    \pgfmathsetmacro{\strand@x}{\thread * \braid@w}
    \ifnum\thread=\strand
    \pgfmathsetmacro{\over@x}{\strand * \braid@w + .5*(1 - \crossing) * \braid@w}
    \pgfmathsetmacro{\under@x}{\strand * \braid@w + .5*(1 + \crossing) * \braid@w}
    \draw[braid] \pgfkeysvalueof{/tikz/braid start} +(\under@x pt,\cross@y pt) to[out=-90,in=90] +(\over@x pt,\cross@y pt -\braid@h);
    \draw[braid] \pgfkeysvalueof{/tikz/braid start} +(\over@x pt,\cross@y pt) to[out=-90,in=90] +(\under@x pt,\cross@y pt -\braid@h);
    \else
    \ifnum\thread=\nextstrand
    \else
     \draw[braid] \pgfkeysvalueof{/tikz/braid start} ++(\strand@x pt,\cross@y pt) -- ++(0,-\braid@h);
    \fi
   \fi
  }
  \stepcounter{braid}
}
\tikzset{braid/.style={double=\pgfkeysvalueof{/tikz/braid colour},double distance=1pt,line width=2pt,white}}
\newcommand{\braid}[2][]{%
  \begingroup
  \pgfkeys{/tikz/strands=2}
  \tikzset{#1}
  \pgfkeysgetvalue{/tikz/braid width}{\braid@w}
  \pgfkeysgetvalue{/tikz/braid height}{\braid@h}
  \setcounter{braid}{0}
  \let\sigma=\cross
  #2
  \endgroup
}
\newtheorem{theorem}{Theorem}[subsection]
\newtheorem{SectionTheorem}{Theorem}
\newtheorem{proposition}[theorem]{Proposition}
\newtheorem{SectionCorollary}{Corollary}
\newtheorem{lemma}[theorem]{Lemma}
\newtheorem{corollary}{Corollary}
\newtheorem{definition}[theorem]{Definition}
\def\Z{\mathbb{Z}}
\def\C{\mathbb{C}}
\def\C{\mathbb{C}}
\def\N{\mathbb{N}}
\def\F{\mathbb{F}}
\def\md{\mathcal{D}}
\def\Zpk{\mathbb{Z}/p^{k}}
\def\Zpk1{\mathbb{Z}/p^{k-1}}
\newcommand{\rref}[1]{(\ref{#1})}
\newcommand{\beg}[2]{\begin{equation}\label{#1}#2\end{equation}}
\def\F{\mathbb{F}}
\def\sl2{\widetilde{SL_{2}(\Z)}}
\def\md
\def\rank{\operatorname{rank}}
\title[]{Howe Duality over Finite Fields III: Full computation and the Gurevich-Howe conjectures}
\author{Sophie Kriz}
\thanks{The author was supported by a 2023 National Science Foundation
Graduate Research Fellowship, no. 2023350430}
\begin{document}

\maketitle

\vspace{-5mm}

\begin{abstract}

In this third paper in a series on type I Howe duality for finite fields,
we give a complete description of the restriction of the oscillator representation
over a finite field to products of dual pairs of symplectic and orthogonal groups in all cases
that occur. 
We also provide a dictionary with the notation of S.-Y. Pan, who identified
which tensor products of irreducible representations occur with non-zero multiplicity.
As an application, we give a recursive construction of all irreducible
complex representations of finite symplectic and orthogonal groups
and a recursive formula for the characters of unipotent cuspidal representations. We also give a proof
of the Gurevich-Howe rank and exhaustion conjectures for type C groups.

\end{abstract}

\vspace{-5mm}

\tableofcontents

\vspace{-7mm}

\section{Introduction}

The purpose of this paper is to complete the explicit description,
started in the previous papers \cite{TotalHoweI, TotalHoweII}, of the decomposition
of the oscillator representation of a symplectic group over a finite field 
of characteristic not equal to $2$ into
irreducible representations of a product of a dual pair of type I,
which consists of a symplectic and an orthogonal group.

This problem was suggested by R. Howe in \cite{HoweFiniteFields}.
The first approaches this question considered the behavior of {\em unipotent} representations appearing in a restricted oscillator representation. J. Adams and A. Moy \cite{AdamsMoy} proved unipotent cuspidal representations paired with each other at their first occurrence in some oscillator representation.
In combination with their calculation of the decomposition of the unipotent part
of an oscillator representation restricted to type II dual pairs (consisting of general linear groups
or unitary groups), A.-M. Aubert, J. Michel, and R. Rouquier \cite{AubertMichelRouquier} formed a conjecture
on the decomposition of the unipotent part in the case of type I dual pairs.

Later, S. Gurevich and R. Howe \cite{HoweGurevich, HoweGurevichBook} 
approched the problem of finite field Howe duality from a different perspective,
by considering whole oscillator representations, restricted to dual pairs in a certain {\em stable range}.
For each such restriction, they constructed one-to-one correspondences sending each representation
of the orthogonal subgroup in the pair to a newly occurring irreducible representation of the symplectic
subgroup. This construction, which they called the {\em eta correspondence},
is related to a concept of {\em rank} which also carries significance for certain questions
about the group dynamics of finite groups of Lie type.

Returning to the study of the unipotent part of a restricted oscillator representation,
in \cite{Pan1}, S.-Y. Pan proved the type I conjecture of Aubert, Michel, and Rouquier by computing
the projection of its character to the class functions spanned by virtual Deligne-Lusztig characters.
In \cite{LiuWang}, D. Liu and Z. Wang also used this calculation to extend the results of Adams and Moy.
Finally, in \cite{Pan2}, Pan proved a compatibility of the results of \cite{Pan1} with Lusztig's
parametrization of irreducible characters to conclude a classification of the irreducible
tensor products appearing with non-zero multiplicity in the restriction of an oscillator representation
to a type I dual pair.

\vspace{2mm}

In the present paper and \cite{TotalHoweI, TotalHoweII},
we approach the question of finite field Howe duality
using a different method, which directly computes the decomposition of
the restricted oscillator representation and verifies that each distinct
tensor product appears with multiplicity one. In this paper,
we use the structure of endomorphism
algebras and interpolated category theory to extend results for stable ranges
to every possible case of type I dual pair.

As a direct application of the methods used in this paper, we also give an inductive construction of all
irreducible representations of symplectic and orthogonal groups over finite fields,
which in particular gives a recursive formula for the characters of the unipotent cuspidal
representations. Finally, we give a dictionary of our notation with the notation
of S.-Y. Pan \cite{Pan2}, who identified which tensor products of irreducible representations
of type I dual subgroups occur with non-zero multiplicity in the restricted oscillator representation,
and we prove that the multiplicity is always $1$.
As another application, we prove the Gurevich-Howe rank conjecture
\cite{HoweGurevich}, Conjecture 0.3.8 on the coincidence of tensor rank and $U$-rank,
and the exhaustion conjecture \cite{HoweGurevich}, Conjecture 0.4.12, which follows.

The part of this program completed in \cite{TotalHoweI, TotalHoweII}
was to treat the so-called {\em stable ranges}, where the rank of one of the
groups in the pair is much greater than the other.
In \cite{TotalHoweI}, the general form of that case of the decomposition was established, in terms
of certain correspondences between the sets of irreducible
representations of symplectic and orthogonal groups,
and, in \cite{TotalHoweII}, it was completely described in terms of the classification of irreducible
representations of finite groups of Lie type obtained from Lusztig's paramterization
of the irreducible characters.

The strategy of this paper is to break up the remaining cases into two {\em metastable ranges}.
In the metastable ranges, the stable picture breaks down in two ways, both of which
are related to the occurence of {\em generalized Lusztig symbols},
which relax some of the defining conditions of a Lusztig symbol.
One type of generalized symbol predicts a $0$-dimensional representation -
those terms are simply omitted. However, one can also encounter
alternating sums of
induction terms coupled with generalized Lusztig symbols of the same dimension.
These are first shown to be genuine (as opposed to virtual) representations.
This is done in Section \ref{InterpolationSection}.
The other step is to compute them completely, which is done in Section \ref{AlternatingSumSection}.

\vspace{3mm}

To describe our results more concretely, we need some notation.
Consider a finite field $\F_q$ of characteristic not equal to $2$, a symplectic
$\F_q$-vector space $V$, and an $\F_q$-vector space $W$ with a non-degenerate
symmetric bilinear form $B$. In this finite field context,
S. Gurevich and R. Howe (see e.g. \cite{HoweGurevich})
proposed the problem of describing explicitly the restriction of the oscillator representation
of $\text{Sp}(V\otimes W)$ to the subgroup $\text{Sp}(V) \times \text{O}(W,B)$.
The previous two papers in this series \cite{TotalHoweI, TotalHoweII}
described this decomposition explicitly in the {\em symplectic stable range}
(i.e. where $dim (V) \geq 2 dim (W)$) and the {\em orthogonal stable range}
(where $dim (V)$ is less than or equal to the dimension of the maximal isotropic
subspace of $W$).

Denote by $\widehat{G}$ the set of isomorphism classes of irreducible complex
representations of a finite group $G$. Then in the symplectic resp. orthogonal
stable range, there are correspondences
\beg{EtaSympStIntro}{\eta^V_{W,B}:\widehat{\text{O}(W,B)} \hookrightarrow \widehat{\text{Sp}(V)}}
\beg{ZetaOrthoStIntro}{\zeta^{W,B}_V: \widehat{\text{Sp}(V)} \hookrightarrow \widehat{\text{O}(W,B)}}
(explicitly constructed in \cite{TotalHoweI} and described in \cite{TotalHoweII} in terms of 
the classification of irreducible representations
obtained from the Lusztig parametrization of the irreducible characters of finite groups of Lie type, see e.g. \cite{Lusztig})
so that the Gurevich-Howe decomposition is a direct sum of terms of the form
\beg{EtaTopBits}{\rho \otimes \eta^V_{W,B} (\rho)}
resp.
\beg{ZetaTopBits}{\zeta^{W,B}_V (\pi) \otimes \pi}
and ``degenerate terms," which can be explicitly descried as tensor
products analogous to \rref{EtaTopBits}, \rref{ZetaTopBits} involving
Harish-Chandra induced modules (i.e. parabolic Verma modules)
obtained from smaller choices of $V$, $W$.

The purpose of the present paper is to describe the Gurevich-Howe decomposition in
the remaining cases, broken up into two {\em metastable ranges}
which, together with the stable ranges, cover all the cases of $V$, $W$. The precise
definition of the metastable ranges is technical and will be given in Subsection \ref{MetastableSubSect}
below (Definition \ref{MetaStablDefn}).

Our main result can now be stated in broad terms as follows:

\begin{SectionTheorem}\label{IntroductionTheorem}
Consider a type I reductive dual pair $(\text{Sp}(V), \text{O}(W,B))$.
Then in the symplectic resp. orthogonal metastable ranges, there are
correspondences
\beg{EtaMetaThmIntro}{\eta^V_{W,B}: \widehat{\text{O}(W,B)} \rightarrow \widehat{\text{Sp}(V)} \cup \{0\}}
\beg{ZetaMetaThmIntro}{\zeta^{W,B}_V: \widehat{\text{Sp}(V)} \rightarrow \widehat{\text{O}(W,B)} \cup \{0\},}
explicitly described in terms of the classification of the irreducible representations of
$\text{O}(W,B)$ and $\text{Sp} (V)$, such that the restriction of the
oscillator representation of $\text{Sp}(V\otimes W)$ to $\text{Sp}(V) \times \text{O}(W,B)$ is a direct
sum of terms of the form \rref{EtaTopBits} resp. \rref{ZetaTopBits} and explicitly described
as tensor products analogous to \rref{EtaTopBits} resp. \rref{ZetaTopBits} involving alternating
sums of Harish-Chandra induced modules for smaller choices of $V, W$, each of which adds
up to a linear combination of irreducible representations with positive integral coefficients.
The alternating sums are explicitly resolved as
sums of irreducible representations in terms of Lusztig's paramterization of irreducible characters.
\end{SectionTheorem}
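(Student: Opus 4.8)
The plan is to deduce Theorem~\ref{IntroductionTheorem} by an induction that uses the stable-range results of \cite{TotalHoweI, TotalHoweII} as the base case and propagates them one step at a time through the metastable ranges. Concretely, I would induct on the distance of the pair $(Sp(V), O(W,B))$ from the nearest stable range, and at each step reduce to smaller pairs by peeling a hyperbolic plane off $V$ or off $W$. The mechanism is the standard see-saw/Mackey argument: restricting the oscillator representation $\omega_{V\otimes W}$ of $Sp(V\otimes W)$ to the subgroup preserving an isotropic line (or hyperbolic plane) in the factor being reduced, and sorting the resulting double cosets, one expresses the restriction of $\omega_{V\otimes W}$ to $Sp(V)\times O(W,B)$ in terms of Harish--Chandra inductions -- the ``parabolic Verma modules'' of the statement -- of restrictions of oscillator representations $\omega_{V'\otimes W'}$ for proper subspaces $V'\subseteq V$, $W'\subseteq W$. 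Those smaller pairs lie either in a stable range or at an earlier stage of the induction, so the inductive hypothesis describes them in Lusztig's terms.

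Running this recursion, the terms produced are Harish--Chandra inductions of the already-known irreducible constituents \rref{EtaTopBits}, \rref{ZetaTopBits}. By Lusztig's theory, such an induction decomposes with multiplicities governed by the combinatorics of Lusztig symbols: enlarging the relevant group corresponds to an explicit operation on symbols, and the constituents are labeled by the symbols obtained from a given one by adding entries. Two features distinguish the metastable ranges. First, some of the symbols produced this way violate the size constraint defining an honest Lusztig symbol and correspond to the zero representation; these are precisely the values where $\eta^V_{W,B}$ or $\zeta^{W,B}_V$ must be $0$, and I would match them up directly, which pins down the maps \rref{EtaMetaThmIntro}, \rref{ZetaMetaThmIntro}. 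Second, other families of generalized symbols with the same $a$-invariant and degree occur with signs, producing a formal alternating sum of induction terms -- this is the candidate degenerate part of the decomposition.

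What then remains are exactly the contents of the two technical sections. First, one must show that each formal alternating sum $\sum_i (-1)^i[\,\mathrm{Ind}_i\,]$, a priori only a virtual character, is in fact a genuine representation -- indeed a non-negative integral combination of irreducibles. Following Section~\ref{InterpolationSection}, I would establish this by interpolation: the combinatorial data defining the alternating sum varies uniformly across a family of pairs, and at the edge of the family (or for an auxiliary range of parameters) it coincides with a manifestly effective restriction of an honest oscillator representation, whence positivity propagates across the family. Second, one must resolve the alternating sum explicitly; following Section~\ref{AlternatingSumSection}, the key point is that a generalized symbol whose defining inequality fails can be rewritten, via the standard ``pulling'' and row-exchange moves, as $\pm$ an honest symbol (or as $0$). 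Substituting these identities into the alternating sum makes it telescope, leaving an explicit sum of irreducibles labeled by honest Lusztig symbols. Assembling the base case, the recursion, and these two computations yields all assertions of the theorem.

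I expect the main obstacle to be the first of these, the genuineness and positivity of the alternating sums: the recursion only ever produces virtual characters, and there is no formal reason an alternating sum of Harish--Chandra induced modules should be effective, so one must genuinely exploit the structure -- the origin of the sum as a piece of an honest representation, or its stability in families -- together with careful bookkeeping of which symbols degenerate and with what sign. By comparison, once positivity is in hand, the explicit resolution of the alternating sums is a substantial but essentially combinatorial manipulation of symbols via Lusztig's degree formula.
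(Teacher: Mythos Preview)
Your plan diverges from the paper's in a fundamental way. The paper does \emph{not} proceed by induction on the distance from the stable range via see-saw or Mackey decomposition. Instead, interpolation is the central engine, not an auxiliary positivity check: one constructs the semisimple tensor category $\overline{\mathfrak{Rep}}(Sp_{2t}(\F_q))$ for a formal parameter $t$, in which the stable-range decomposition of \cite{TotalHoweI, TotalHoweII} holds verbatim (Theorem~\ref{InterpolatedSemiSimpEtaThm}), and then specializes to $t=N$. At that point the simple objects of the relevant subcategory are labeled by ``formal Lusztig symbols'' in which the appended coordinate $N'_\rho$ may fail to exceed the existing entries; Lusztig's dimension polynomial assigns such an object dimension equal to $(-1)^{a-i}$ times that of the genuine symbol obtained by inserting $N'_\rho$ in its correct position. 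A map $\Psi$ on Grothendieck groups implements this signed ``true permutation,'' and applying $\Psi$ to the interpolated decomposition and collecting terms with the same genuine target symbol is exactly what produces the alternating sums $\mathcal{A}_k(\rho,N'_\rho)$. Positivity is then automatic, because the total expression is the class of the honest restricted oscillator representation.

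Your recursive scheme has a gap precisely at the point where signs should appear. A Mackey or Kudla-filtration decomposition of $\omega[V\otimes W]$ along a parabolic of $Sp(V)$ or $O(W,B)$ yields only honest filtration pieces and inductions thereof---everything is a genuine representation with nonnegative multiplicities---so no alternating sum of Harish--Chandra inductions can emerge from that mechanism alone. In the paper the signs do not come from any decomposition of $\omega$; they come from reinterpreting a non-monotone formal symbol as $\pm$ a genuine one, which is a feature of the interpolated category and its dimension function, not of any see-saw identity. Your sketch asserts that ``families of generalized symbols \dots\ occur with signs'' but does not say what in your induction produces a virtual expression in the first place; absent that, the recursion can only reproduce the stable-range shape \rref{TotalHoweIStSymp}, \rref{TotalHoweIStOrtho} with ordinary inductions, which is false in the metastable range. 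Your description of Section~\ref{AlternatingSumSection} is closer to the mark: the paper does resolve the alternating sums by the symbol Pieri rule and a telescoping argument on the associated Young diagrams, though the ``pulling and row-exchange'' picture you describe is again the interpolated-symbol sign rule rather than a separate combinatorial identity.
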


\vspace{1mm}

\noindent We restate this more precisely after we have introduced the necessary notation, in
Theorems \ref{ExplicitTheoremSymp} and \ref{ExplicitTheoremOrtho} below. The
description of the alternating sum coefficients appearing with the eta and zeta correspondence
is given in Theorem \ref{AlternatingSumIntroPropStatement}.

\vspace{1mm}

\noindent {\bf Remark:}
This result extends the results of
S.-Y. Pan \cite{Pan1, Pan2}. We discuss this in Appendix \ref{PanAppendix} where
we see how our decompositions of the restricted oscillator representation recover Pan's identification
of the sumands which occur with non-zero multiplicity. In fact, our proof explicitly
shows that they all must appear with multiplicity $1$ and that the sum of the products
of the occuring dimensions does really add up to
$q^{dim (V) \cdot dim (W)/2} = dim (\omega [ V\otimes W])$, using a somewhat
subtle combinatorial
argument. (The cases of rank $\leq 6$ were verified on a computer using
Maple and the GAP package CHEVIE.)

\vspace{3mm}

Further, our organization of these summands in terms of one-to-one correspondences between sets of irreducible representations of symplectic groups and those of orthogonal groups reconciles
Pan's result with the original program of Howe 
\cite{HoweGurevich, HoweGurevichBook, HoweFiniteFields, HoweKobayashi}
(see Appendix A). 

Every irreducible representation of a fixed symplectic group
is obtained by applying the eta correspondence to an irreducible representation
of an orthogonal group forming a symplectic metastable reductive dual pair.
Similarly, every irreducible representation of an orthogonal group is obtained from a signed
zeta correspondence. In particular, recursively applying the eta and zeta correspondences
gives an inductive
construction of every irreducible representation from certain ``terminal representations"
(see Subsection \ref{InductiveConstruction}).

Even more concretely, by organizing the decomposition of the restricted oscillator representation
into the eta and zeta correspondence, we can isolate 
describe the ``top part" consisting of tensor products of irreducible representations
of orthogonal or symplectic groups
with their images under the eta or zeta correpsondence, respectively.
The character of this top part is approximable using the characters of the oscillator representations
(which are computable from the Schr\"{o}dinger model), and orthogonality of characters
can then be used to obtain a formula for the character of the eta or zeta correspondence
applied to a representation, in terms of the characters of the original input
representation and the characters of the oscillator representation
(see Subsection \ref{CharaAlgorithmSubSect}). The unipotent cuspidal representations
are obtainable by recursively applying the eta and zeta correspondences to trivial
and sign representations, meaning this process gives a concrete construction
their characters.


\vspace{3mm}

One important application of our description of the eta correspondence
is that it can be used to answer questions related the
character theory of finite symplectic groups
(see \cite{HoweGurevich, HoweGurevichBook}).
In particular, in Section \ref{GurevichHoweSection},
we prove the Gurevich-Howe {\em rank conjecture}, which predict the equality
of a two kinds of ``ranks" for certain representations.
The first notion of rank defined by Gurevich and Howe
is called {\em $U$-rank} and is defined as the maximal
``rank" of a character in the restriction of a representation to the Siegel unipotent subgroup of a
symplectic group. We denote it by $rk_U$.
The second notion of rank is called {\em tensor rank}
and is defined to be the minimal natural number $k$ such that every irreducible
summand
of the input representation is contained in a tensor product of less than
or equal to $k$ oscillator representations. We denote it by $rk_\otimes$. The main result of Section
\ref{GurevichHoweSection} is
that Conjecture 0.3.8 of \cite{HoweGurevich} holds:

\begin{SectionTheorem}[The Gurevich-Howe Rank Conjecture, \cite{HoweGurevich}, Conjecture 0.3.8] \label{MainMatchRes}
For an irreducible representation $\rho$ of a finite symplectic group $\text{Sp}_{2N} (\F_q)$ (for $q$ an odd prime power), if the $U$-rank of $\rho$ is strictly less than $N$, then it agrees with the tensor rank of $\rho$:
\beg{NeededMatchingThm}{rk_U (\rho ) = rk_\otimes (\rho).}
\end{SectionTheorem}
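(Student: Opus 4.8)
The plan is to prove the two inequalities $rk_U(\rho) \le rk_\otimes(\rho)$ and $rk_\otimes(\rho) \le rk_U(\rho)$ separately, using the explicit eta correspondence from Theorem \ref{IntroductionTheorem} (restated as Theorems \ref{ExplicitTheoremSymp} and \ref{ExplicitTheoremOrtho}) to control which irreducible representations occur in tensor products of oscillator representations. First I would recall that a tensor product of $k$ oscillator representations of $Sp_{2N}(\F_q)$ is the restriction, along the diagonal $Sp(V) \hookrightarrow Sp(V) \times O(W,B)$, of the oscillator representation of $Sp(V \otimes W)$ for $W$ a suitable orthogonal space of dimension $k$ (taking both split and non-split forms, and accounting for the $\pm$ characters, so that one really considers all orthogonal spaces of dimension $\le k$). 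Hence $rk_\otimes(\rho) \le k$ if and only if $\rho$ occurs in the Gurevich-Howe restriction for some $O(W,B)$ with $\dim W \le k$, i.e. $\rho$ lies in the image of some $\eta^V_{W,B}$ (including the degenerate and alternating-sum terms) with $\dim W \le k$.

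For the inequality $rk_U(\rho) \le rk_\otimes(\rho)$: if $\rho = \eta^V_{W,B}(\sigma) \otimes (\text{something})$-type summand, then restricting to the Siegel unipotent radical and using the explicit formula for $\eta$ — which, by Theorems \ref{ExplicitTheoremSymp}, \ref{ExplicitTheoremOrtho}, is built from parabolic (Verma-type) induction from a Levi of the form $GL_a \times Sp_{2(N-a)}$ with $a \le \dim W$ together with data coming from $O(W,B)$ — one sees directly that every character of the Siegel unipotent occurring in $\rho$ has rank at most $\dim W$. This is essentially a bookkeeping argument on the unipotent-radical characters of parabolically induced representations, combined with the fact established in Sections \ref{InterpolationSection} and \ref{AlternatingSumSection} that the alternating sums resolve into genuine irreducibles without increasing the relevant Levi size. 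For the reverse inequality $rk_\otimes(\rho) \le rk_U(\rho)$: if $rk_U(\rho) = r < N$, I would show that $\rho$ must occur in the image of $\eta^V_{W,B}$ for some orthogonal space with $\dim W = r$. The point is that a representation with all Siegel-unipotent characters of rank $\le r$ is, by Gurevich-Howe's own analysis (or by an argument using the decomposition of $L^2$ of the appropriate space of rank-$\le r$ matrices), a summand of the Weil representation attached to $V \otimes W$ for some $\dim W = r$; the new input is that Theorem \ref{IntroductionTheorem} now describes that Weil representation \emph{completely}, so every such $\rho$ is accounted for and genuinely has tensor rank $\le r$, while a dimension/rank count shows it cannot have tensor rank $< r$ (as that would force $rk_U(\rho) < r$ by the first inequality). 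The hypothesis $r < N$ is what guarantees we stay within the stable and metastable ranges where the explicit description applies, so no case falls outside Theorem \ref{IntroductionTheorem}.

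The main obstacle I expect is the reverse inequality, specifically showing that a representation with $U$-rank exactly $r$ actually appears in the Howe restriction for \emph{some} orthogonal space of dimension exactly $r$ (rather than only being dominated in $U$-rank by such). This requires knowing that the collection of eta-images, as $(W,B)$ ranges over all orthogonal spaces of dimension $r$ (both types, both sign characters) together with the degenerate and alternating-sum contributions, exhausts all irreducibles of $Sp_{2N}(\F_q)$ of $U$-rank $\le r$ — which is precisely the content of the exhaustion statement, and is where the completeness of the metastable-range computation (Sections \ref{InterpolationSection}, \ref{AlternatingSumSection}) is essential. Once exhaustion in this form is in hand, matching the two rank functions is a comparison of two monotone filtrations of $\widehat{Sp_{2N}(\F_q)}$ that agree on each graded piece, and the theorem follows.
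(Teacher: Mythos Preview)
Your proposal has a genuine gap in the reverse inequality $rk_\otimes(\rho) \le rk_U(\rho)$. You write that a representation with $U$-rank $r$ ``is, by Gurevich--Howe's own analysis \dots, a summand of the Weil representation attached to $V \otimes W$ for some $\dim W = r$,'' and then invoke Theorem \ref{IntroductionTheorem} to conclude. But this claim \emph{is} the exhaustion conjecture, which in the paper is a \emph{corollary} of the rank conjecture, not an input to it. Knowing the full decomposition of $\omega[V\otimes W]$ tells you which representations have a given tensor rank; it does not by itself tell you which have a given $U$-rank, because $U$-rank is defined via restriction to the Siegel unipotent and is not read off directly from Lusztig data. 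So your argument is circular as written.

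The paper's route avoids this entirely. It does not attempt to characterize representations of a given $U$-rank. Instead it proves the contrapositive: every $\rho$ with $rk_\otimes(\rho) > N$ already has $rk_U(\rho) = N$ (Proposition \ref{AllUnstableTopProp}), which combined with the known stable-range equality (Corollary \ref{StableTrivCase}) gives the theorem. The key mechanism is Proposition \ref{IndEtaProp}: for a fixed $(W,B)$ and two symplectic spaces $V \subset V'$, the eta images satisfy
\[
\eta^{V'}(\pi) \subseteq \mathrm{Ind}^{P_{M-N}^{V'}}\bigl(\eta^V(\pi)^\pm\bigr),
\]
proved via the Pieri rule on the single symbol coordinate that the eta correspondence adds. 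One then takes $V'$ large enough to be in the stable range with $(W,B)$, where $rk_U(\eta^{V'}(\pi)) = \dim W$ is already known, and uses the elementary fact that parabolic induction from a Levi $Sp(V) \times GL_{M-N}$ can raise $U$-rank by at most $M-N$. This forces $rk_U(\rho) \ge N$. Your proposal does not contain this induction relation or any substitute for it; that is the missing idea.
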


\vspace{3mm}

\noindent We then also have:

\begin{SectionCorollary}[The Gurevich-Howe Exhaustion Conjecture, \cite{HoweGurevich}, Conjecture 0.4.12]
For every choice of $0< n<N$, every irreducible $\text{Sp}_{2N} (\F_q)$-representation
of $U$-rank $n$ is produced in the image of an eta correspondence $\eta^{\F_q^{2N}}_{W,B}$
for one of the two choices of orthogonal spaces $(W,B)$ of dimension $n$.
\end{SectionCorollary}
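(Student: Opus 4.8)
The plan is to extract this from the Rank Conjecture (Theorem~\ref{MainMatchRes}) together with the explicit symplectic stable-range decomposition of \cite{TotalHoweI}, recalled as the stable case of Theorem~\ref{IntroductionTheorem} and spelled out in Theorem~\ref{ExplicitTheoremSymp}. Fix an irreducible $\rho\in\widehat{Sp_{2N}(\F_q)}$ with $rk_U(\rho)=n$ and $0<n<N$. First, Theorem~\ref{MainMatchRes} gives $rk_\otimes(\rho)=n$; since $rk_\otimes$ is the \emph{least} number of oscillator factors needed, this says precisely that $\rho$ occurs in a tensor product $\omega_{\psi_1}\otimes\cdots\otimes\omega_{\psi_n}$ of exactly $n$ oscillator representations of $Sp_{2N}(\F_q)$, and in no shorter one. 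Writing $\psi_i=\psi(a_i\,\cdot)$ for a fixed nontrivial additive character $\psi$ and $V=\F_q^{2N}$, this tensor product is exactly the restriction to $Sp(V)$ of the oscillator representation $\omega$ of $Sp(V\otimes W)$, equipped with its natural $O(W,B)$-equivariance, where $(W,B)$ is the $n$-dimensional $\F_q$-quadratic space with an orthogonal basis of lengths $a_1,\dots,a_n$. Up to isomorphism $(W,B)$ depends only on its discriminant in $\F_q^\times/(\F_q^\times)^2$, hence is one of the two isomorphism classes of nondegenerate $n$-dimensional quadratic spaces over $\F_q$.

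Next I would invoke the stable-range decomposition. Since $n<N$ we have $\dim V=2N>2n=2\dim W$, so $(Sp(V),O(W,B))$ lies strictly inside the symplectic stable range and Theorems~\ref{IntroductionTheorem} and \ref{ExplicitTheoremSymp} apply: as an $Sp(V)\times O(W,B)$-representation,
\[
\omega\big|_{Sp(V)\times O(W,B)}\;\cong\;\bigoplus_{\sigma\in\widehat{O(W,B)}}\sigma\otimes\eta^V_{W,B}(\sigma)\;\;\oplus\;\;(\text{degenerate terms}),
\]
where in this range $\eta^V_{W,B}$---which is the map $\eta^{\F_q^{2N}}_{W,B}$ of the statement---is an honest injection $\widehat{O(W,B)}\hookrightarrow\widehat{Sp(V)}$. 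The representation $\rho$ occurs on the $Sp(V)$-side of this decomposition, so it is either one of the $\eta^V_{W,B}(\sigma)$---in which case we are done, because $\dim W=n$---or an irreducible constituent of a degenerate term. Thus the statement reduces to the single claim: \emph{no degenerate term has a constituent of $U$-rank $\ge n$}.

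To establish that claim I would use the rank filtration underlying the construction of the eta correspondence in \cite{TotalHoweI}. In the Schr\"odinger realization of $\omega$ on functions on $\mathrm{Hom}(L^{*},W)$ for a Lagrangian $L\subset V$, the Siegel unipotent radical $U_L$ of $Sp(V)$ acts on the delta-function $\delta_f$ through a character whose rank---in the sense entering the definition of $rk_U$---equals $\mathrm{rank}(f)-\dim\mathrm{rad}\big(B|_{\mathrm{im}(f)}\big)$. Since $\dim W=n<N=\dim L$, this quantity is $\le n$, with equality exactly on the generic locus $\{f:\mathrm{im}(f)=W\}$. The $U$-rank filtration of $\omega|_{Sp(V)}$ is precisely the filtration by this invariant, so its top ($U$-rank $n$) layer is accounted for entirely by the generic orbit and has constituents exactly the $\eta^V_{W,B}(\sigma)$, while every degenerate term---supported on the non-generic orbits---has all irreducible constituents of $U$-rank $\le n-1$. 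Together with $rk_U(\rho)=n$ this rules out the degenerate case and forces $\rho\cong\eta^V_{W,B}(\sigma)$ for some $\sigma\in\widehat{O(W,B)}$, which (as $\dim W=n$) is what we wanted.

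The main obstacle is precisely this last step: matching the ``degenerate terms'' of Theorem~\ref{ExplicitTheoremSymp} with the non-generic orbits in the mixed model and confirming that they carry only characters of rank $<n$. Alternatively, one can read the $U$-rank of each term off its explicit position in Lusztig's classification as supplied by Theorem~\ref{ExplicitTheoremSymp}, and check that the degenerate ones all sit at rank $\le n-1$; either way this is the one genuinely structural input. Everything else is formal once the Rank Conjecture and the stable-range decomposition are available---and, since $n<N$ forces $\dim V>2\dim W$, this corollary uses only the stable-range results of \cite{TotalHoweI, TotalHoweII} and Theorem~\ref{MainMatchRes}, not the metastable-range computations of the present paper.
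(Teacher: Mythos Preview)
Your argument is correct, but it takes a longer route than the paper's proof. The paper's proof is two lines: it cites Theorem~0.4.13 of \cite{HoweGurevich}, which already states that every irreducible of \emph{tensor} rank $n$ lies in the image of some $\eta^{\F_q^{2N}}_{W,B}$ with $\dim W=n$, and then combines this with Theorem~\ref{MainMatchRes} to convert the hypothesis on $U$-rank into one on tensor rank.

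You reprove the tensor-rank-to-eta-image implication by hand, via the Schr\"odinger model and the rank filtration. What you call the ``main obstacle''---that the degenerate terms in the stable decomposition all have $U$-rank strictly below $n$---is exactly the content of Theorem~4.3.3 of \cite{HoweGurevichBook}, which the paper recalls in Section~\ref{BackgroundSect} (see \rref{EtaAttainsURank} and the sentence following it). So your sketch is sound, but you are re-deriving a result already available as a citation. There is in fact an even shorter way to close your argument without any $U$-rank analysis of the degenerate terms: in the stable decomposition \rref{TotalHoweIStSymp}, the degenerate $Sp(V)$-constituents are precisely the $\eta^V_{W[-k],B[-k]}(\sigma)$ for $k>0$, hence already occur in the restricted oscillator for an orthogonal space of dimension $n-2k<n$, contradicting $rk_\otimes(\rho)=n$. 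This is essentially how Theorem~0.4.13 of \cite{HoweGurevich} is proved, and it bypasses your Schr\"odinger-model step entirely. Your closing remark that only the stable-range inputs and Theorem~\ref{MainMatchRes} are needed here is correct and matches the paper.
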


\begin{proof}
In \cite{HoweGurevich}, Theorem 0.4.13, it is proved that
every irreducible representation of $\text{Sp}_{2N} (\F_q)$ with tensor rank $n$ is produced
in the image of such an eta correspondence. Thus,
since Theorem \ref{MainMatchRes} states that, in this range, tensor rank
is always equal to $U$-rank, the exhaustion conjecture follows immediately.
\end{proof}

\vspace{1mm}

\noindent {\bf Remark:} Similar results for groups of type A were proved by
R. M. Guralnick, M. Larsen, and P. H. Tiep \cite{PiGuralLarsTiep} and for groups
of type B and D by M. Larsen and P. H. Tiep \cite{LarsenTiep}.

\vspace{1mm}

The correspondences \rref{EtaMetaThmIntro}, \rref{ZetaMetaThmIntro}
are not formal extensions of the correspondences \rref{EtaSympStIntro}, \rref{ZetaOrthoStIntro}
to the metastable range. If we extended the correspondences \rref{EtaSympStIntro},
\rref{ZetaOrthoStIntro} formally, we would obtain ``generalized Lusztig symbols" which could
either contain repeated terms (these translate to the term $\{0\}$ in
\rref{EtaMetaThmIntro}, \rref{ZetaMetaThmIntro}) or non-monotone terms;
those are replaced by different genuine Lusztig symbols.
The reason for working in the two metastable ranges is to avoid the appearance
of illegal Lusztig symbols with negative terms, which are more complicated to resolve.

Our method for proving Theorem \ref{IntroductionTheorem} is interpolation
of semisimple pre-Tannakian categories \cite{DeligneSymetrique, DeligneTensor, HarmanSnowden, VectorDelannoy, OscillatorRepsFull, InterpolatedSchemes}.
There are other possible methods for approaching this
problem. Notably, analogously as the general linear group is embedded into the
multiplicative semigroup of all square matrices, the symplectic group
is embedded into the {\em oscillaor semigroup} discovered by Howe
\cite{HoweFiniteFields, HoweGurevich, HoweGurevichBook, HoweKobayashi}.
Using this semigroup, one can also obtain results on the Gurevich-Howe decomposition
beyond the stable ranges (see e.g. Proposition 8.2.1 of \cite{HoweGurevich}).

Since \rref{EtaMetaThmIntro} and \rref{ZetaMetaThmIntro} cover all the cases
of $V$ and $W$, it is possible to use Theorem \ref{IntroductionTheorem}
for an inductive construction of all irreducible representations of $\text{Sp}(V)$, $\text{O}(W,B)$
(see Subsection \ref{InductiveConstruction}).

\vspace{3mm}

This paper is organized as follows: In Section \ref{BackgroundSection}, we
discuss some background of the oscillator representations and the 
classification of irreducible representations of symplectic and orthogonal groups
obtained from Lusztig's paramterization of irreducible characters.
In Section \ref{StatementSection},
we describe the metastable ranges, construct an eta or zeta correspondence
for every case of type I reductive dual pair, and discuss the alternating sums replacing parabolic
induction in the metastable range. This establishes the necessary notation to restate
Theorem \ref{IntroductionTheorem} in concrete terms.
In Section \ref{InterpolationSection}, we discuss interpolated representation categories and
the analogues of the results of \cite{TotalHoweI, TotalHoweII}, prove
Theorem \ref{IntroductionTheorem}, and discuss the representation-theoretical implications. 
In Section \ref{GurevichHoweSection}, we discuss the Gurevich-Howe
rank conjecture, proving Theorem \ref{MainMatchRes}.
In Section \ref{AlternatingSumSection}, we resolve the alternating sums
appearing as coefficients
in the decomposition of the restricted oscillator representation.

In Appendix \ref{PanAppendix}, we discuss a dictionary between the notation and results
of this series of papers with S.-Y. Pan's identification of the pairs
of irreducible representations whose tensor products appear with non-zero multiplicity
in the restricted oscillator representation.

\section{Background}\label{BackgroundSection}

We begin by recalling the results of \cite{TotalHoweI, TotalHoweII} more precisely.
First, we fix notation: For a symplectic group $\text{Sp}(\mathbf{V})$, we write $\omega_a [ \mathbf{V}]$ to denote the {\em oscillator representation} arising from the Weil-Shale representation of the Heisenberg group on $\mathbf{V}$ with central (non-trivial, additive) character in $\mathbb{F}_q$ corresponding to $a \in \mathbb{F}_q^\times$ under a fixed identification of $\mathbb{F}_q$ with its Pontrjagin dual. In the case of $a= 1$, we omit the subscript and write $\omega [ \mathbf{V}] = \omega_1 [ \mathbf{V}]$.

Now let us consider a type I reductive dual pair of subgroups of $\text{Sp}(\mathbf{V})$, which must be of the form
\beg{GeneralIntroTypeIRedDualPair}{(\text{Sp}(V), \text{O}(W,B)) \subseteq \text{Sp} (\mathbf{V}),}
for $\mathbb{F}_q$-spaces $V$ and $W$, say with symplectic and symmetric bilinear forms $S$ and $B$ respectively, so that $\mathbf{V} = V \otimes W$, and we consider its symplectic form to be $S\otimes B$(see, for example, \cite{HoweFiniteFields}). Tensoring matrices gives an inclusion of the product $\text{Sp}(V) \times \text{O}(W,B)$ into $\text{Sp}(\mathbf{V})$. Note that if there exists a $k$-dimensional isotropic subspace
of $W$ (resp. $V$), then $B$ (resp. $S$) can be expressed as a direct sum of $k$ copies of a hyperbolic
$2$-dimensional symmetric bilinear (resp. symplectic) form with a form $B[-k]$ (resp. $S[-k]$)
on a $dim (W) - 2k$-dimensional $W[-k]$ (resp.
$dim (V) -2k$-dimensional space $V[-k]$). If $k$ is a dimension of an isotropic subspace,
we write $P_k^B$ (resp. $P_k^V$) for the maximal parabolic subgroup
of $\text{O}(W,B)$ (resp. $\text{Sp}(V)$) with Levi factor $\text{GL}_k (\F_q) \times \text{O}(W[-k], B[-k])$ (resp.
$\text{GL}_k (\F_q) \times \text{Sp}(V[-k])$).

We will again write, for a
subgroup $H\subseteq G$, $\text{Ind}_H^G (\rho)$
for the induction of an $H$-representation $\rho$ to $G$.

In \cite{TotalHoweI}, we defined two stable ranges of such type I reductive dual pairs: We say a
pair \rref{GeneralIntroTypeIRedDualPair} is in the {\em symplectic stable range} if $dim (W) \leq dim (V)$, and similarly, we say it is in the {\em orthogonal stable range} if the dimension of $V$ is less than or equal to the dimension of a maximal isotropic subspace of $W$ with respect $B$. Let us denote
by $h_W$ the dimension of a maximal isotropic subspace of $W$.
We proved that, for $(\text{Sp}(V), \text{O}(W,B))$ in the symplectic stable range, the restriction of
$\omega [V\otimes W]$ to a $\text{Sp}(V) \times O(W,B)$-representation is
\beg{TotalHoweIStSymp}{ \bigoplus_{k=0}^{h_W}
\bigoplus_{\rho \in \widehat{O(W[-k], B[-k])}} \eta^V (\rho) \otimes
\text{Ind}_{P_k^B} (\rho \otimes \epsilon (\text{det})) }
for a system of mutually disjoint injections
$$\eta^V_{W,B} : \widehat{\text{O}(W,B)} \hookrightarrow \widehat{\text{Sp}(V)}$$
called the {\em eta correspondence} (ommiting the subscript when the source is
determined). See also, for this case, the original
papers of S. Gurevich and R. Howe
finding the eta correspondence \cite{HoweGurevich, HoweGurevichBook}
and an approach.
Similarly, for $(\text{Sp}(V), \text{O}(W,B))$ in the orthogonal stable range, writing $dim (V) = 2N$,
the restriction of $\omega [ V\otimes W]$ to $\text{Sp}(V) \times \text{O}(W,B)$ decomposes as
\beg{TotalHoweIStOrtho}{\bigoplus_{k=0}^{N}
\bigoplus_{\rho \in \widehat{\text{Sp}(V[-k])}} 
\text{Ind}_{P_k^V} (\rho \otimes \epsilon (\text{det})) \otimes \zeta^{W,B} (\rho)}
for a system of mutually disjoint injections
$$\zeta^{W,B}_V: \widehat{\text{Sp}(V)} \hookrightarrow \widehat{\text{O}(W,B)}$$
called the {\em zeta correspondence} (again, ommiting the subscript when the source
is determined).

\vspace{3mm}

Therefore, in either stable range, the problem of Howe duality reduces to explicitly
computing the eta and zeta correspondences, which we was the main result \cite{TotalHoweII},
using the classification of irreducible representations obtained from Lusztig's parametrization
of irreducible characters.
Recall that, broadly, an irreducible representation of a finite group of Lie type
is classified by data consisting of a conjugacy class of a semisimple element $(s)$ in the
dual group $G^*$ (the ``semisimple part"), a unipotent representation of (the dual of) the 
identity component of the centralizer
of $s$ in $G^*$,
and possible ``central sign
data" when $Z(G)$ is disconnected; we discuss this in more detail in Section \ref{BackgroundSection} below.
For $(\text{Sp}(V), \text{O}(W,B))$ in the symplectic stable range, our computation
of the eta correspondence
$$\eta^V_{W,B}: \widehat{\text{O}(W,B)} \rightarrow \widehat{\text{Sp}(V)} $$
can be summarized as transforming the Lusztig data of an irreducible representation of
$\text{O}(W,B)$ into Lusztig data specifying an irreducible representation of $\text{Sp}(V)$ by
\begin{itemize}
\item Adding an appropriate number of $-1$ eigenvalues (and a single $1$ eigenvalue, with position
depending on the action of $\Z/2 \subseteq O(W,B)$) to
the semisimple part if $dim (W)$ is odd,
and $1$ eigenvalues if $dim (W)$ is even.

\vspace{1mm}

\item Altering the unipotent part by considering the single changed
factor of the the 
identity component of centralizer of the new semisimple part
(corresponding to $-1$ eigenvalues
if $dim (W)$ is odd and $1$ eigenvalues of the semisimple part if $dim (W)$) and adding a single coordinate to the Lusztig symbol to get the appropriate new rank and defect.

\vspace{1mm}

\item Central sign data is determined by the quadratic character applied to the semisimple part (as a torus element) multiplied by the discriminant of $B$ when $dim (W)$ is odd, and the central sign data of
$SO(W,B)$ when $dim (W)$ is even.
\end{itemize}

Similarly,
for $(\text{Sp} (V), \text{O}(W,B))$ in the orthogonal stable range, our construction of the zeta correspondence
$$\zeta^{W,B}_V: \widehat{\text{Sp}(V)} \rightarrow \widehat{\text{O}(W,B)} $$
can be summarized by altering the Lusztig data of an irreducible representation of $\widehat{\text{Sp}(V)}$
by adding $-1$ eigenvalues to the semisimple part if $dim (W)$ is odd, $1$ eigenvalues if $dim (W)$ is even,
altering the affected factor of the unipotent part by adding a single appropriate coordinate, and assigning
central sign data determined by the quadratic character of the original semisimple part and $disc(B)$
or the original central sign data.

To be even more precise, we need to also discuss
the classification of irreducible representations. Let us consider a reductive group $G$ over $\F_q$, specifically with a focus on the
symplectic and special and full orthogonal group cases.
We recall that the set of irreducible representations of $G$ can be partitioned into disjoint
subsets corresponding to
geometric conjugacy classes of the data of a maximal torus $T$ in $G$ and an
irreducible character $\theta$ of $T(\F_q)$ (this partition is the one in
whose Deligne-Lusztig induction $R_T (\theta)$ a given irreducible $G$-representation
appears in, see \cite{DeligneLusztig, LDLDisconn, DeligneLusztigDisconnected, DigneMichel,  Lusztig}).
By considering the dual group $G^*$, the data of $(T, \theta)$ defines a
conjugacy class of a semisimple element $(s) \in G^* (\F_q)$, allowing one
to consider a partition of $\widehat{G}$ into subsets called the {\em Lusztig series} corresponding
to $(s)$. The irreducible $G$-representations in the Lusztig series corresponding to $(1) \in G^*(\F_q)$
are the {\em unipotent} irreducible representations.
To consider the case of $G= \text{Sp}_{2r}$ or $\text{SO}_{2r}^\pm$ with disconnected
center $Z(G) = \mu_2$, the series corresponding to $s$ can be further
partioned according to the elements $Z_{G^* (\F_q)} (s)/ Z_{G^* (\F_q)} (s)^\circ$
when this quotient is non-trivial (see \cite{LDLDisconn, DeligneLusztigDisconnected,DigneMichel}). Note that in the cases we consider here,
it may only be trivial or $\mu_2 = \{\pm 1\}$.
We call this extra sign the {\em central sign data} corresponding to a representation
when it corresponds to such an $(s)$.

The Lusztig series corresponding to a conjugacy class $(s)$ of a semisimple
element in $G^*(\F_q)$ (and a choice of central sign data in the case when $Z_{G^* (\F_q)} (s)/ Z_{G^* (\F_q)} (s)^\circ \cong \mu_2$)
is in bijective correspondence with the set of isomorphism classes of
irreducible unipotent
representations $u$ of the dual of the 
identity component of $s$'s centralizer
$$Z_{G^*(\F_q)} (s)^\circ .$$

This data of $(s) \in G^* (\F_q)$, possible central sign data $\alpha$,
and a unipotent irreducible representation $u$ of
the (dual of the) identity component of $s$'s centralizer
determines a $G(\F_q)$-representation which we denote by $r^G [(s), u , \alpha]$
or $r^G [ (s), u]$ depending on whether central sign data is present or not.
The dimension of this representation is
$$\frac{|G(\F_q)|_{q'}}{|(Z_{G^*(\F_q)} (s))^*|_{q'}} \text{dim} (u),$$
(where $|?|_{q'}$ denotes the prime to $q$ part of a group order).
This can also be expressed in our cases as
$$\begin{array}{c}
\text{dim} (r^G [(s), u] ) = \frac{|G|_{q'}}{|Z_{G^*(\F_q)} (s)^\circ|_{q'}} \text{dim}(u), \\[1ex]
 \text{dim} (r^G [(s), u, \alpha] ) = \frac{|G(\F_q)|_{q'}}{2 |Z_{G^*(\F_q)} (s)^\circ|_{q'}} \text{dim}(u).
\end{array}$$
(Recall that the order of a reductive group over a finite field is equal to the order of its dual.)

For a choice of data $(s)$ where the quotient of its centralizer by its identity
component is $\mu_2$, and for an irreducible unipotent representation $u$ of
(the dual) of this identity component, we may also consider the representation
\beg{SplittingWithPM1s}{r^{\text{Sp}_{2N} (\F_q)}[(s),u] = r^{\text{Sp}_{2N} (\F_q)}[(s), u, +1] \oplus r^{\text{Sp}_{2N} (\F_q)}[(s), u, -1],}
giving
\beg{SplittingWithPM1sHalfDim}{ dim (r^{\text{Sp}_{2N} (\F_q)}[(s), u, \pm 1]) = \frac{dim (r^{\text{Sp}_{2N} (\F_q)}[(s), u])}{2}.}
For the symplectic group, it turns out that
central sign data occurs precisely when $s$ has $-1$ as an eigenvalue
and the tensor factor of $u$ corresponding to the factor of
the identity component of $s$'s centralizer contributed
by $-1$ eigenvalues is non-degenerate.

We also consider the case of disconnected $G$, particularly the orthogonal
groups $\text{O}_{2m+1} (\F_q)$ and $\text{O}_{2m}^\pm (\F_q)$. In these cases,
we classify the irreducible representation according to data defining an irreducible representation
of the corresponding special orthogonal group and certain {\em extension data}
specifying a certain summand of the induction when it splits. In the case of odd orthogonal groups,
the extension data always consists of a single sign specifying the
action of the $\mu_2$ center on an irreducible representation, since we have a splitting
$\text{O}_{2m+1} (\F_q)=\mu_2 \times \text{SO}_{2m+1} (\F_q)$. In the case
of even orthogonal groups, the extension data is more complex and less canonical.

\subsection{The semisimple data}

The data of a conjugacy class of a semisimple element in a finite group of Lie type
is equivalent to the data
of its eigenvalues (under the action of the Weyl group).
In any symplectic or special orthogonal group, every maximal torus is isomorphic
to a product of $SO_2^\pm$ factors, possibly on field extensions of $\mathbb{F}_q$
\beg{TorusMadeOfSO2s}{T \cong \prod_{i=1}^k SO_2^\pm (\F_{q^{n_i}})}
such that $n_1+ \dots + n_k$ adds up to the total rank, and in the case of tori in 
even special orthogonal groups, the sign $\pm$ denoting whether or not the full group is equal to the
product of signs appearing in each $SO_2^\pm $ factor. Note that each factor is cyclic
$SO_2^\pm (\F_{q^n}) \cong \mu_{q^n \mp 1}$,
giving an identification of $T$ with its Pontrjagin
dual. 

Given a choice of semisimple part of the classification data
consisting of a conjugacy class $(s) \in G^* (\F_q)$ before proceeding further
with describing the rest of the classification data, we must write
down the 
identity component of its centralizer $Z_{G^* (\F_q)} (s)$.
To compute this, consider $s$ as an element of some maximal
torus $T$ as in \rref{TorusMadeOfSO2s}, minimizing $n_i$ where possible.
Coordinates $\pm 1 \neq \lambda \in SO_2^\pm (\F_{q^n})$ of multiplicity
$j$ give a factor of the form $U_j^\pm (\F_{q^n})$ (where we use the notation
$U^+_j= GL_j$).
Eigenvalues $\pm 1 \in SO_2^\pm (\F_q)$ give factors depending
on the specific choice of $G^*$ and $G$.

If we consider $G^* (\F_q)= \text{Sp}_{2r} (\F_q)$ (to describe representations of
$G(\F_q)= SO_{2r+1} (\F_q)$), the centralizer
of a semisimple $s\in G$ is always connected and of the form
\beg{ZSp2rFqs}{Z_{\text{Sp}_{2r} (\F_q)} (s) = \prod_{i=1}^k U^\pm_{j_i} (\F_{q^{n_i}}) \times \text{Sp}_{2p} (\F_q)
\times \text{Sp}_{2\ell} (\F_q)}
where $s$ has $1$ as an eigenvalue of multiplicity $2p$ and $-1$
as an eigenvalue of multiplicity $2\ell$.

If $G^*= SO_{2r+1} (\F_q)$ (to describe representations of
$G= \text{Sp}_{2r} (\F_q)$), the identity component of the
cenralizer of a semisimple element $s \in G$ is of the form
\beg{ZSO2r+1Fqs}{Z_{SO_{2r+1} (\F_q)} (s)^\circ = \prod_{i=1}^k U^\pm_{j_i} (\F_{q^{n_i}}) \times SO_{2p+1} (\F_q)
\times SO_{2\ell}^\pm (\F_q)}
if $s$ has $1$ as an eigenvalue of multiplicity $2p+1$ and $-1$ as an eigenvalue of multiplicity
$2\ell$ (recall that a single $1$ eigenvalue is automatic in this case, since it must
be added to embed $T \subset SO_{2r+1} (\F_q)$).
The full centralizer is obtained by replacing $\text{SO}_{2\ell}^\pm (\F_q)$ by
$\text{O}_{2\ell}^\pm (\F_q)$, meaning that the quotient of the centralizer
of $s$ by its identity component is $\mu_2$ precisely when $\ell>0$ and it trivial when
$\ell = 0$.
The sign of the final factor $SO_{2\ell}^\pm (\F_q)$ corresponds to whether
we pick $T\subset SO_{2r}^+ (\F_q)$ or $SO_{2r}^- (\F_q)$.
We specifically will later consider the semisimple conjugacy classes $(\sigma_{r}^+)$ and
$(\sigma_r^-)$ which both have $-1$ as an eigenvalue of multiplicity $2r$ and a
single (automatic) $1$ eigenvalue, placed so that $\sigma_r^\pm$ is in a torus of the form
\rref{TorusMadeOfSO2s} such that the product of all involved signs of $SO_2$ is equal to $\pm$.
These elements satisfy $Z_{\text{SO}_{2r+1} (\F_q)} (\sigma_r^\pm) = \text{O}_{2r}^\pm (\F_q)$, giving
$$Z_{\text{SO}_{2r+1} (\F_q)} (\sigma_r^\pm )^\circ = \text{SO}_{2r}^\pm (\F_q).$$

If $G^* = SO_{2r}^\pm (\F_q)$ (to describe representations of
$G= SO_{2r}^\pm (\F_q)$), the identity component
of the centralizer of a semisimple element $s\in G$ is of the form
\beg{ZSO2rpmFqs}{Z_{SO_{2r}^\pm (\F_q)} (s)^\circ = \prod_{i=1}^k U^\pm_{j_i} (\F_{q^{n_i}}) \times SO_{2p}^\pm (\F_q)
\times SO_{2\ell}^\pm (\F_q)}
with signs chosen so that their product is the total sign of $G$,
where $s$ has $1$ as an eigenvalue of multiplicity $2p$ and $-1$
as an eigenvalue of multiplicity $2\ell$. For more details, see \cite{Carter}.

\subsection{The unipotent data}
Given a choice of $(s) \in G^*$, the next part of the classification data is a
unipotent representation $u$ of the dual of the 
identity component of the centralizer of $s$
$(Z_{G^*} (s)^\circ )^*$, which by \rref{ZSp2rFqs}, \rref{ZSO2r+1Fqs}, \rref{ZSO2rpmFqs},
we can write down as a product of unitary groups and a pair of factors of
$B$, $C$, $D$ or ${}^2D$-type. We may consider $u$ as a tensor product
of irreducible unipotent representations of each factor.
It will turn out (recalling the constructions
in \cite{TotalHoweII}) that only the tensor factor of $u$ corresponding
to one of these final two factors will be ``altered" in the description of $\eta$ or $\zeta$.
Therefore, we describe in this subsection the classification of unipotent representations of
$\text{Sp}_{2r} (\F_q)$, $SO_{2r+1} (\F_q)$, and $SO_{2r}^\pm (\F_q)$, using symbols.

The classification of irreducible unipotent representations of $\text{Sp}_{2r} (\F_q)$ and $SO_{2r+1}( \F_q)$
are the same, since they are dual groups. A {\em symbol of $B$- or $C$-type}
and rank $r$ is defined to be a pair of increasing sequences
$${\lambda_1< \dots < \lambda_a \choose \mu_1< \dots < \mu_b}$$
for $\lambda_i, \mu_i \in \Z_{\geq 0}$ such that $(\lambda_1, \mu_1) \neq (0,0)$,
$a-b$ is odd (the ``defect condition"), and
$$\sum_{i=1}^a \lambda_i + \sum_{i=1}^b \mu_i = r+ \frac{(a+b-1)^2}{4}$$
(the ``rank condition"). We take switching rows to give the same symbol.
The irreducible unipotents are in bijective correspondence with this
combinatorial data (and we denote the representation corresponding to a symbol by the
symbol itself).

Similarly, for the case of $SO_{2r}^+ (\F_q)$ (resp. $SO_{2r}^- (\F_q)$), irreducible
unipotent representations correspond to symbols of $D$- (resp. $^2 D$-type) and rank $r$,
which are defined to consist of pairs of increasing sequences ${\lambda_1< \dots < \lambda_a
\choose \mu_1< \dots < \mu_b}$
for $\lambda_i, \mu_i \in \Z_{\geq 0 }$ such that $(\lambda_1, \mu_1) \neq (0,0)$,
the ``defect condition" $a-b \equiv 0$ mod $4$ (resp. $\equiv 2$ mod $4$, for $SO_{2r}^- (\F_q)$)
and the ``rank condition"
$$\sum_{i=1}^a \lambda_i + \sum_{i=1}^b \mu_i = r + \frac{ (a+b) (a+b-2)}{4}$$
(the same rank condition is used for $SO_{2r}^- (\F_q)$). Again, we denote
a unipotent representation the same as its corresponding symbol.

Further, the dimension of a unipotent representation ${\lambda_1< \dots < \lambda_a
\choose \mu_1< \dots < \mu_b}$ of a symplectic
or special orthogonal group $G$ can be calculated as the following formula
$$\frac{\displaystyle\prod_{1\leq i< j \leq a} (q^{\lambda_j} - q^{\lambda_i}) \cdot \prod_{1\leq i< j \leq b} (q^{\mu_j} - q^{\mu_i}) \cdot \prod_{1\leq i \leq a, 1\leq j \leq b} (q^{\lambda_i} + q^{\mu_j}) \cdot
|G|_{q'}
}{\displaystyle 2^{c(a,b)} \cdot \prod_{1 \leq i \leq a} \prod_{j=1}^{\lambda_i}  (q^{2j}-1)
\cdot \prod_{1 \leq i \leq b} \prod_{j=1}^{\mu_i}  (q^{2j}-1) \cdot q^{d(a,b)}}$$
where $c(a,b)=\lfloor (a+b-1)/2\rfloor$, and we write
$d (a, b) = \sum_{i=1}^{\lfloor (a+b)/2 \rfloor}  {a+b-2i \choose 2}$.

\subsection{Central and extension sign data}

As described in the beginning of the section, since the
center of $\text{Sp}_{2r} (\F_q)$ is $\mu_2$, 
the representations are indexed by choices of 
semisimple $(s) \in \text{SO}_{2r+1} (\F_q)$, possible central sign
data consisting of an element $\pm 1$ of the quotient
of $s$'s centralizer by its identity component if it is non-trivial,
and unipotent representation $u$ of (the dual of) 
$Z_{\text{SO}_{2r+1} (\F_q)} (s)^\circ$. 

Next, consider the odd orthogonal groups $\text{O}_{2r+1} (\F_q)$. 
In this case, 
the center splits $\text{O}_{2r+1} (\F_q) = \mu_2 \times \text{SO}_{2r+1} (\F_q)$.
For $\text{SO}_{2r+1} (\F_q)$ the center is trivial, so the classification data
of a semisimple conjugacy class $(s) \in \text{Sp}_{2r} (\F_q)$ and an irreducible
unipotent representation of the 
connected centralizer $Z_{\text{Sp}_{2r} (\F_q)} (s)$ determines
all the irreducible representations $r^{\text{SO}_{2r+1} (\F_q)} [(s),u]$.
The induction of such a representation always splits
$$\begin{array}{c}
\text{Ind}_{\text{SO}_{2r+1} (\F_q)}^{\text{O}_{2r+1} (\F_q)} (r^{\text{SO}_{2r+1} (\F_q)} [(s),u])=\\[1ex]
(+1) \otimes r^{\text{SO}_{2r+1} (\F_q)} [(s),u] \oplus (-1) \otimes r^{\text{SO}_{2r+1} (\F_q)} [(s),u].
\end{array}$$
where $(\pm 1)$ denote the trivial or sign representation of the $\mu_2$ central factor of
$\text{O}_{2r+1} (\F_q)$. Call this sign the $\text{O}_{2m+1} (\F_q)$-extension data.
Therefore, writing
$$r^{\text{O}_{2r+1} (\F_q)} [(s),u]^{\pm 1} := (\pm 1) \otimes r^{\text{SO}_{2r+1} (\F_q)} [(s),u],$$
we see that the irreducible representations of $\text{O}_{2r+1} (\F_q)$ are classified by
the extended $\text{O}_{2r+1} (\F_q)$-classification data of a semisimple
conjugacy class $(s) \in \text{Sp}_{2r} (\F_q)$, an irreducible unipotent representation $u$
of the dual of the 
identity component of its centralizer, and extension sign data $(\pm 1)$.

Let us now consider $O_{2r}^\pm (\F_q)$-representations.
Suppose $\pi \in \widehat{O_{2r}^\pm (\F_q)}$. Then $\rho$ 
can be enumerated according to following two effects. Suppose the representation $\pi$ appears as the summand of an induction 
$\text{Ind}_{\text{SO}_{2r}^\pm
(\F_q)}^{\text{O}_{2r}^\pm (\F_q)} (r^{\text{SO}_{2r}^\pm (\F_q)} [(s),u])$.
\begin{enumerate}
\item 
If $s$ has eigenvalues not equal to $\pm 1$, there will be other choices of semisimple data
$(s')\in \text{SO}_{2r}^\pm (\F_q)$ such that
$$\text{Ind}_{\text{SO}_{2r}^\pm (\F_q)}^{\text{O}_{2r}^\pm (\F_q)} ( r^{\text{SO}_{2r}^\pm (\F_q)} [(s), u])
\cong \text{Ind}_{\text{SO}_{2r}^\pm (\F_q)}^{\text{O}_{2r}^\pm (\F_q)} ( r^{\text{SO}_{2r}^\pm (\F_q)} [(s'), u]).
$$
These $s'$ are precisely those where $s$ and $s'$ are not conjugate in $\text{SO}_{2r}^\pm (\F_q)$
but are conjugate in $\text{O}_{2r}^\pm (\F_q)$ due to the larger Weyl group. To avoid overcounting,
we consider semisimple data describing $\pi$ to be the $\text{O}_{2r}^\pm (\F_q)$
conjugacy class of an semisimple element $s\in \text{SO}_{2r}^\pm (\F_q)$.


%
\item The induction of an irreducible unipotent $\text{SO}_{2r}^\pm (\F_q)$-representation
corresponding to a symbol ${\lambda_1< \dots <\lambda_a\choose \mu_1< \dots < \mu_b}$
to $\text{O}_{2r}^\pm (\F_q)$ splits into two non-isomorphic equidimensional pieces
${\lambda_1< \dots <\lambda_a\choose \mu_1< \dots < \mu_b}^+ \oplus {\lambda_1< \dots <\lambda_a\choose \mu_1< \dots < \mu_b}^-$.
(In the split case of $\text{SO}_{2r}^+ (\F_q)$, for both irreducible summands
of corresponding to a degenerate symbol ${\lambda_1< \dots < \lambda_a \choose
\lambda_1< \dots < \lambda_a}$, their inductions to the split orthogonal group
are irreducible and isomorphic.)
\end{enumerate}

In summary, then, 
an irreducible representation of $O_{2r}^\pm (\F_q)$ corresponds to the data of
the $\text{O}_{2r}^\pm (\F_q)$-conjugacy class
$(s)$ of a semisimple element in $\text{SO}_{2r}^\pm (\F_q)$,
the unipotent representation $u$ of $(Z_{\text{SO}_{2r}^\pm} (s)^\circ)^*$,
and the choice of $\text{O}_{2r}^\pm (\F_q)$-extension data consiting of 
a choice of sign $\pm 1$ if $s$ has $1$ eigenvalues and the corresponding
factor of $u$ is non-degenerate and an
(indepedently chosen) sign $\pm 1$ if $s$ has $-1$ eigenvalues and the corresponding factor of $u$
is non-degenerate. 
Call this the classification data of an irreducible representation of $O_{2r}^\pm (\F_q)$.
Denote the corresponding irreducible representation by
$$r^{\text{O}_{2r}^\pm (\F_q)}[(s), u, (\pm 1, \pm1)],$$
writing in $(\pm 1, \pm 1)$ the sign arising from $+1$ eigenvalues of $s$ first
and the sign arising from $-1$ eigenvalues of $s$ second, and removing either
if $s$ has no such eigenvalues.

\section{The general statements}\label{StatementSection}

Finally, in this section we precisely state the constructions of the 
extended eta and zeta correspondences,
define the alternating sums of parabolic inductions, and state Theorem \ref{IntroductionTheorem}.

In Subsection \ref{MetastableSubSect}, we precisely define the symplectic and orthogonal
metastable ranges.
In Subsections \ref{ExtendedEtaSubsect}
and \ref{ExtendedZetaSubsect}, we describe the constructions of the extended eta and zeta correspondences
in terms of classification data. The constructions are entirely the same as the constructions
of the eta and zeta correspondences in the symplectic and orthogonal stable ranges,
when they can be applied. For input represenations where the construction cannot be applied
(specifically, where the step of concatenating a new coordinate
to a symbol factor in the unipotent part of the classification fails
to give a legal symbol), we set the extended eta
and zeta correspondences to output $0$.
In Subsection \ref{AlternatingSumSubsect}, we define the alternating sums of Harish-Chandra
induced modules that play the role of coefficients for the eta and zeta correspondence terms
in the decomposition of the restricted oscillator representation in Theorem
\ref{IntroductionTheorem}.
Finally, having prepared all the necessary notation, we restate Theorem \ref{IntroductionTheorem} precisely in
Subsection \ref{ExplicitTheoremSubsect} as
Theorems \ref{ExplicitTheoremSymp} and \ref{ExplicitTheoremOrtho}.

\subsection{The metastable ranges}\label{MetastableSubSect}
First, for a general unstable choice of symplectic and orthogonal spaces $V$ and $(W,B)$,
we must still choose whether it is ``closer" to the symplectic or orthogonal stable range.

We separate the pairs $(V, (W,B))$ which do not lie in the symplectic stable range or
the orthogonal stable range
into ``metastable ranges" to indicate whether we intend
to approach the decomposition of the restriction of $\omega [ V\otimes W]$
by extending the eta or zeta correspondence.
We consider the different cases of $O(W,B)$ individually.

\begin{definition}\label{MetaStablDefn}
Consider a choice of symplectic and orthogonal spaces $V$ and $(W,B)$. Write $dim (V) = 2N$.

\begin{itemize}

\item If $W$ is of odd dimension $dim (W) = 2m+1$, then we say $(V, (W,B))$ is in the {\em
symplectic metastable range} if
$$m< N<2m+1.$$
Say $(V,(W,B))$ is in the {\em orthogonal metastable range} if
$$m<2N \leq 2m$$

\vspace{1mm}

\item If $W$ is of even dimension $dim (W) = 2m$ and $B$ is not completely split, then we say
$(V,(W,B))$ is in the {\em symplectic metastable range} if
$$m \leq N < 2m.$$
Say $(V,(W,B))$ is in the {\em orthogonal metastable range} if
$$m-1<2N < 2m.$$

\vspace{1mm}

\item If $W$ is of even dimension $dim (W) = 2m$ and $B$ is completely split, then we say
$(V,(W,B))$ is in the {\em symplectic metastable range} if
$$m \leq N < 2m.$$
Say $(V,(W,B))$ is in the {\em orthogonal metastable range} if
$$m<2N < 2m.$$

\end{itemize}
\end{definition}

We see that under this definition, 
every unstable choice of symplectic and orthogonal
spaces $V$ and $(W,B)$ is contained in precisely one metastable range.
More specifically, the disjoint union of the symplectic stable and metastable ranges consists
of all choices of symplectic spaces $V$ and orthogonal spaces $(W,B)$ such that
\beg{EtaCorrStMetaStRange}{\frac{dim (V)}{2} \geq \lfloor \frac{dim (W)}{2} \rfloor ,}
while the disjoint union of the orthogonal stable and metastable ranges consist
of $V$ and $(W,B)$ satisfying the complimentary condition
\beg{ZetaCorrStMetaStRange}{\frac{dim (V)}{2} < \lfloor\frac{dim(W)}{2} \rfloor .}

Broadly, the conditions \rref{EtaCorrStMetaStRange} and \rref{ZetaCorrStMetaStRange} should be thought
of as detecting whether it is more computationally viable to decompose the oscillator
representation in terms of the eta correspondence (i.e. as a sum of distinct irreducible
$\text{Sp}(V)$-representations with potentially non-irreducible $O(W,B)$-coefficients), or in terms of
the zeta correspondence (i.e. as a sum of distinct irreducible $O(W,B)$-representations
with potentially non-irreducible $\text{Sp}(V)$-coefficients).
Concretely,
in our constructions of the eta and zeta correspondence, the conditions \rref{EtaCorrStMetaStRange}
and \rref{ZetaCorrStMetaStRange} ensure that, in either case, we never
attempt to concatenate a negative coordinate to a Lusztig symbol.
It is possible to further extend the eta and zeta correspondences to all ranges
by interpreting them to output
$0$ when this occurs (Lusztig's dimension formula for symbols indeed
suggests that symbols with negative
coordinates are $0$-dimensional). However, approaching the decomposition
of the oscillator representation from the ``wrong side"
is in general less computationally efficient, and we do not consider it for the purposes
of this paper.

\subsection{The extended eta correspondence}\label{ExtendedEtaSubsect}

For a choice of symplectic and orthogonal spaces $V$, $(W,B)$ which are in the disjoint union of
the symplectic stable and metastable ranges, we define a map
$$\eta^V_{W,B} : \widehat{O(W,B)} \rightarrow \widehat{\text{Sp}(V)} \cup \{0\}$$
selecting the ``top" irreducible representation of $\text{Sp}(V)$ whose tensor product with the input
irreducible representation of $O(W,B)$ is a summand of the restricted oscillator representation
(or outputting $0$ when no such summand appears).
More explicitly, for every $\rho \in \widehat{O(W,B)}$,
$$\eta^V_{W,B} (\rho)\otimes \rho \subset Res_{\text{Sp} (V) \times O(W,B)} ( \omega
[V \otimes W])$$
and there exists some $\pi \in \widehat{\text{Sp} (V)}$ with $\pi \otimes \rho$ appearing as a
summand of the restricted oscillator
representation if and only if $\eta^V_{W,B} (\rho) \neq 0$. 

With this forumlation, we may treat $\eta^V_{W,B}$ for $(V, (W,B))$ in the symplectic
stable and metastable range at the same time.
In the case of symplectic stable choices of $V$ and $(W,B)$, this is precisely a review
of the construction of the eta correspondence given in \cite{TotalHoweII}.
To define the
map $\eta^V_{W,B}$ we treat the different cases of $(W,B)$ separately.
\vspace{3mm}

\noindent{\bf Case 1:} Suppose $W$ is of odd dimension $2m+1$,
and suppose $(V, (W,B))$ is either in the symplectic stable or metastable range. In this
case, writing $dim (V) = 2N$, this means $m < N$.

For an irreducible representation $\pi \in \widehat{O(W,B)}$, we can then write it as
\beg{SplitOfZ2SympInputOWBRep}{\pi \cong r^{\text{O}_{2m+1} (\F_q)}[(s), u]^{\pm 1}}
where $(s)$ is a conjugacy class of a semisimple element
$s \in \text{Sp}_{2m} (\F_q) = (SO_{2m+1} (\F_q))^*$, and $u$ is an irreducible unipotent
representation of $(Z_{\text{Sp}_{2m} (\F_q)} (s)^\circ)^*$. Then to define
$\eta^V_{W,B} (\pi)$,
we must either specify the classification data for $\text{Sp}_{2N} (\F_q) = \text{Sp}(V)$,
or put it to be $0$.

The first part of the classification data for $\text{Sp}_{2N} (\F_q)$ is a conjugacy class of a semisimple
element in $(\text{Sp}_{2N} (\F_q))^* = \text{SO}_{2N+1} (\F_q)$. 
Let us consider the semisimple part $s$ of the input representation's classification data
as an element of a maximal torus
\beg{sInT}{s \in T = \prod_{i=1}^k SO_{2}^\pm (\mathbb{F}_{q^{n_i}})\subseteq (SO_{2m+1} (\F_q))^* = \text{Sp}_{2m} (\F_q)}
(such that $n_1 + \dots + n_k = m$). Identifying each
$SO_2^\pm (\F_{q^{n_i}})$ with the cyclic group $\mu_{q^{n_i}\mp 1}$,
define $\epsilon (s)$ to be the product of
the quadratic character $\epsilon: \mu_{q^{n_i}\mp 1} \rightarrow \{\pm 1\}$
applied to each coordinate of $s$ in \rref{sInT}.
Taking a product with
$\sigma_{N-m}^\pm \in SO_{2(N-m)+1} (\F_q)$
(recalling that $N>m$ by the range conditions) gives two options of a conjugacy class
of a semisimple element in $SO_{2N+1} (\F_q)$.
Write
$$\phi^\pm (s) = s \oplus \sigma_{N-m}^\pm \in SO_{2N +1} (\F_q).$$
We choose the sign of $\pm $ according to the sign of the $(\pm 1)$ factor
indicating the $\mu_2$-action in
\rref{SplitOfZ2SympInputOWBRep}.
Suppose $s$ has $-1$ as an eigenvalue of multiplicity $2\ell$, and write
the 
identity component of its centralizer as
\beg{s'soriginalcentralizeretaoddinput}{Z_{\text{Sp}_{2m} (\F_q)} (s)^\circ = H \times \text{Sp}_{2\ell} (\F_q).}
Then $\phi^\pm (s)$ has $-1$ as an eigenvalue of multiplicity $2(N-m+\ell)$ and 
the 
identity component of its centralizer
can be factored  
$Z_{2O_{2N+1} (\F_q)} (\phi^\pm (s))^\circ= H^* \times SO_{2(N-m+\ell)}^\pm (\F_q)$.

Next, writing \rref{s'soriginalcentralizeretaoddinput}, and then
$(Z_{\text{Sp}_{2m} (\F_q)} (s)^\circ)^* = H^* \times SO_{2\ell+1} (\F_q)$, we may consider
the unipotent part $u$ of the classification data of the input representation as a tensor product
$ u= u_{H^*} \otimes {\lambda_1< \dots < \lambda_a \choose \mu_1< \dots < \mu_b}$
where, as the notation suggest, $u_{H^*}$ is an irreducible unipotent representation of $H^*$,
and ${\lambda_1< \dots < \lambda_a \choose \mu_1< \dots < \mu_b}$ is a symbol 
specifying a unipotent representation of $SO_{2\ell+1} (\F_q)$. Since the defect $a-b$ of the symbol
is odd, we may therefore permute the rows so that we may assume $a-b$ is $1$ mod $4$.
Let us write
\beg{N'rhoOddEtaCase}{N'_{\rho} := N-m + \frac{a+b-1}{2}.}

Now if $N'_\rho < \lambda_a$, then
${\lambda_1< \dots < \lambda_a < N'_{\rho} \choose \mu_1< \dots < \mu_b}$
describes a symbol of defect $2$ mod $4$ and rank precisely equal to $N-m+\ell$, and hence
taking
$$\phi^- (u) = \widetilde{u_{H^*}} \otimes {\lambda_1< \dots < \lambda_a < N'_{\rho} \choose
\mu_1< \dots < \mu_b}$$
gives a unipotent representation of the dual of the 
identity component of
the centralizer of the new semisimple data
$(Z_{SO_{2N+1} (\F_q)} (\phi^- (s))^\circ)^* = H \times SO_{2(N-m+\ell)}^- (\F_q)$.
In this case, say $\phi^- (u)$ is constructible.
Say $\phi^- (u)$ is {\em inconstructible} if $N'_\rho \leq \lambda_a$.

Similarly, if $N'_\rho < \mu_b$, then
${\lambda_1< \dots < \lambda_a \choose \mu_1< \dots < \mu_b< N'_\rho}$
describes a symbol of defect $0$ mod $4$ and rank precisely equal to $N-m+\ell$, and hence
taking
$$\phi^+ (u) = \widetilde{u^{H^*}} \otimes {\lambda_1< \dots < \lambda_a \choose
\mu_1< \dots < \mu_b < N'_\rho}$$
gives a unipotent representation of the dual of the 
identity component of the centralizer of the new semisimple data
$(Z_{SO_{2N+1} (\F_q)} (\phi^+ (s))^\circ)^* = H \times SO_{2(N-m+\ell)}^+ (\F_q)$.
In this case, say $\phi^+ (u)$ is constructible.
Say $\phi^+ (u)$ is {\em inconstructible} if $N'_\rho \leq \mu_b$.

\begin{definition}\label{EtaOddCaseDefn}
Assume the above notation. We define $\eta^V_{W,B} (\pi )$ to be the irreducible
$\text{Sp}(V)$-representation with classification data $[(\phi^\pm (s)), \phi^\pm (u), \epsilon (s) \cdot disc (B)]$
\beg{EtaSympOddConstructCase}{\eta^V_{W,B} (\pi) := r^{\text{Sp} (V)}[(\phi^\pm (s)), \phi^\pm (u),\epsilon (s) \cdot disc (B)],}
if $\phi^\pm (u)$ is constructible. We put
$\eta^V_{W,B} (\rho) := 0$
if $\phi^\pm (u)$ is inconstrucible.
\end{definition}

\vspace{3mm}

\noindent{\bf Case 2:} Suppose $W$ is of even dimension $2m$, and write $\alpha$ for the sign
so that $O (W,B) = O^\pm_{2m} (\F_q)$. Suppose also that $(V, (W,B))$ is either in the symplectic
stable or metastable range, meaning that if we write $dim (V) = 2N$, we have
$m \leq N$.

Let us consider an input irreducible $\text{O}^\pm_{2m} (\F_q)$-representation
$\pi$. Let us suppose that its 
$\text{O}^\pm_{2m} (\F_q)$-classification data consists of
a $\text{O}^\pm_{2m} (\F_q)$-conjugacy class $(s)$ of a semisimple element of $\text{SO}^\pm_{2m} (\F_q)$, an irreducible unipotent
representation $u$ of, say, $(Z_{\text{SO}^\pm_{2m} (\F_q)} (s)^\circ)^*$, and
possible central sign data depending on which eigenvalues appear in $s$.

As in the previous case, consider $s$ as an element of a torus
$s\in T \cong \prod_{i=1}^k SO_2^{\pm} (\F_{q^{n_i}})$.
We take a direct sum with the identity matrix $I_{2(N-m)+1}$ to obtain a semisimple element
$$\phi (s) = s \oplus I_{2(N-m)+1} \in SO_{2N+1} (\F_q) ,$$
adding only $1$ eigenvalues to $s$.
If $s$ originally does not have any $1$ eigenvalues, then the 
identity component of its centralizer is altered by
taking a product with a special orthogonal group factor
$$Z_{SO_{2N+1} (\F_q)} (\phi (s))^\circ= (Z_{SO_{2m}^\alpha (\F_q)} (s)^\circ)^* \times SO_{2(N-m)+1} (\F_q).$$
We consider the irreducible unipotent representation
$$\phi (u):= \widetilde{u} \otimes 1$$
of the dual of this group, tensoring $\widetilde{u}$ with the trivial representation of
the new factor $(SO_{2(N-m)+1} (\F_q))^*$.
On the other hand, if $s$ has $1$ as an eigenvalue of multiplicity $2p$ for $p>0$, then, writing
$$Z_{SO_{2m}^\alpha (\F_q)} (s)^\circ = H \times SO_{2p}^\pm (\F_q),$$
we then have
$$Z_{SO_{2N+1} (\F_q)} (\phi(s))^\circ = H^* \times SO_{2(N-m+p)+1} (\F_q)$$
(note that in this case $H = H^*$).
Write $(Z_{SO_{2m}^\alpha (\F_q)} (s)^\circ)^* = H^* \times SO_{2p}^\pm (\F_q)$,
and $u = u_{H^*} \otimes {\lambda_1< \dots < \lambda_a \choose \mu_1< \dots < \mu_b}$
for an irreducible unipotent representation $u_{H^*}$ of $H^*$, and a symbol
${\lambda_1< \dots < \lambda_a \choose \mu_1< \dots < \mu_b}$ of $SO_{2p}^\pm (\F_q)$.
Suppose first that ${\lambda_1< \dots < \lambda_a \choose \mu_1< \dots < \mu_b}$ is non-degenerate.
In the case where the $1$ eigenvalues correspond to a non-split factor $SO_{2p}^- (\F_q)$ of
the 
identity component of $s$'s centralizer,
then switch rows so that $a-b+1$ is $1$ mod $4$.
In the case where the $1$ eigenvalues correspond to a split factor $SO_{2p}^+ (\F_q)$ of the 
identity component of
$s$'s centralizer,
then switch rows so that either $a>b$ or, if $a=b$, for the maximal $i$ such that
$\lambda_i \neq \mu_i$, we have $\lambda_i> \mu_i$.
Write
$$N_\rho' = N-m +\frac{a+b}{2}.$$
Therefore, if $\lambda_a < N'_\rho$ or $\mu_b< N'_\rho$,
$$\phi^- (u) = \widetilde{u_{H^*}} \otimes {\lambda_1< \dots < \lambda_a < N'_\rho \choose
\mu_1< \dots < \mu_b}, \text{ or }$$
$$\phi^+ (u) = \widetilde{u_{H^*}} \otimes {\lambda_1< \dots < \lambda_a \choose
\mu_1< \dots < \mu_b < N'_\rho}$$
respectively define irreducible unipotent representations of the dual group
$(Z_{SO_{2N+1} (\F_q)}(\phi (s))^\circ)^* = H
\times \text{Sp}_{2(N-m+p)} (\F_q)$.
Say $\phi^+ (u)$, $\phi^- (u)$ are {\em inconstructible} if $\lambda_a\geq N'_\rho$, or
$\mu_b \geq N'_\rho$, respectively.
We choose the sign of $\phi^\pm (u)$ according to the central sign data $(\pm 1)$ chosen
from $s$ having $1$ eigenvalues.

Finally, to define an output irreducible $\text{Sp}_{2N} (\F_q)$-representation, we 
need to also choose output
central sign data if $\phi (s)$ has $-1$ eigenvalues. By definition, $\phi (s)$ has the same number
of $-1$ eigenvalues as $s$. Therefore, in this case, the original $s$ has $-1$ eigenvalues also,
so the $O(W,B)$-classification data supplies us with the data of one more central sign
$\pm 1$, which we use as the output central sign data.

\begin{definition}\label{EtaEvenCaseDefn}
Assume the above notation. We define $\eta^V_{W,B} (\pi)$ to be the irreducible
$\text{Sp}(V)$-representation with classification data
$[ \phi (s), \phi^\alpha (u), \beta ]$ where the sign $\alpha$ denotes
the central sign data from $s$'s $1$ eigenvalues (and is omitted if no
such data is given), the sign in $\beta$ is the central sign data
from $s$'s $-1$ eigenvalues (and is omitted if no
such data is given):
\beg{EtaDefnSympMetastEven}{\eta^V_{W,B} (\pi) := r^{\text{Sp} (V)}[(\phi (s)), \phi^\alpha (u), \beta],}
if $\phi^\alpha (u)$ is constructible. We put
$\eta^V_{W,B} (\rho) := 0$
if $\phi^\pm (u)$ is inconstructible.
\end{definition}

\subsection{The extended zeta correspondence}\label{ExtendedZetaSubsect}

Similarly as in the previous subsection,
for a choice of symplectic and orthogonal spaces $V$, $(W,B)$ which are in the disjoint union of
the orthogonal stable and metastable ranges, we define a map
$$\zeta_V^{W,B} : \widehat{\text{Sp}(V)} \rightarrow \widehat{\text{O}(W,B)} \cup \{0\}$$
analogously selecting the ``top" irreducible representation of $O(W,B)$ whose tensor
product with the input irreducible representation of $\text{Sp}(V)$ is a summand of the restricted
oscillator representation. This construction proceeds entirely similarly, still considering
the cases of $O(W,B)$ separately:

\vspace{3mm}

\noindent{\bf Case 1:} Suppose $W$ is of odd dimension $2m+1$, and
suppose $(V,(W,B))$ is either in the orthogonal stable or metastable range. In
this case, writing $dim (V) = 2N$, this means $m\geq N$.

Now we fix an irreducible representation $\rho$ of $\text{Sp}(V)=\text{Sp}_{2N} (\F_q)$. Our goal
is to construct an irreducible representation of $O(W,B)$, which 
is equivalent to specifying a
semisimple conjugacy class $(s) \in (\text{Sp}_{2N} (\F_q))^* = SO_{2N+1} (\F_q)$,
an irreducible unipotent representation of the dual of the 
identity component of its centralizer, and an extension sign.
Say that $s$ has $-1$ as an eigenvalue of multiplicity $2\ell$, for $0\leq \ell \leq N$, and
let us write
$$Z_{SO_{2N+1} (\F_q)} (s)^\circ = H \times SO_{2\ell}^\pm (\F_q).$$
Recall, as a semisimple element of $SO_{2N+1} (\F_q)$, $s$ must have at least one 
$1$ eigenvalue, arising from the embedding of any maximal torus $T$ of
the form \rref{TorusMadeOfSO2s} into
$SO_{2N+1} (\F_q)$. Therefore, considering $s$ as an element of the torus,
let us write $\widetilde{s}\in T$ (giving a $2N$ by $2N$ matrix), by removing the single ``forced"
eigenvalue $1$ from $s$. Taking a direct sum with $-I_{2(m-N)}$,
$$\psi (s) := \widetilde{s} \oplus (-I)_{2(m-N)} $$
we obtain a semisimple element of $\text{Sp}_{2m} (\F_q) = (SO_{2m+1} (\F_q))^*$,
which has $-1$ as an eigenvalue of multiplicity $2(m-N+\ell )$. The 
identity component of its centralizer is then
$$Z_{\text{Sp}_{2m}(\F_q)} (\psi (s))^\circ = H^* \times \text{Sp}_{2(m-N+\ell)} (\F_q).$$

For the unipotent part of the classification data of $\rho$,
again write 
$u= u^{H^*} \otimes {\lambda_1< \dots < \lambda_a \choose
\mu_1< \dots < \mu_b}$ for $u^{H^*} \in \widehat{(H^*)}_u$
and a symbol ${\lambda_1< \dots < \lambda_a \choose \mu_1< \dots < \mu_b}$
specificying an irreducible unipotent representation of $SO_{2\ell}^\pm (\F_q)$.
In the case where the $1$ eigenvalues correspond to a non-split factor $SO_{2p}^- (\F_q)$ of 
the 
identity component of $s$'s centralizer,
then switch rows so that $a-b+1$ is $1$ mod $4$.
In the case where the $1$ eigenvalues correspond to a split factor $SO_{2p}^+ (\F_q)$ of
the 
identity component of $s$'s centralizer,
then switch rows so that either $a>b$ or, if $a=b$, for the maximal $i$ such that
$\lambda_i \neq \mu_i$, we have $\lambda_i> \mu_i$.
Let us write
$$m'_\rho = m-N + \frac{a+b}{2}.$$
Then if $\lambda_a< m'_\rho$, $\mu_b< m'_\rho$, respectively, the symbols
\beg{mrho'addsymbol}{{\lambda_1< \dots < \lambda_a < m'_\rho\choose
\mu_1< \dots < \mu_b}, {\lambda_1< \dots < \lambda_a \choose
\mu_1< \dots < \mu_b < m'_\rho}}
have odd defect and rank precisely equal to $m-N+\ell$, and therefore specify
irreducible unipotent representations of $SO_{2(m-N+\ell)+1} (\F_q)
= (\text{Sp}_{2(m-N+\ell)} (\F_q))^*$.

In the case when $\ell = 0$, both of the symbols \rref{mrho'addsymbol} specify the
trivial representation, so let us put
$$\psi (u) = \widetilde{u_{H^*}} \otimes 1.$$
On the other hand, in the case when $\ell>0$, we are given
sign data $\pm 1$ in the 
classification data for $\rho$, which we can use to select which of the symbols \rref{mrho'addsymbol}
we attempt to use for the unipotent part of the classification data of $\zeta^{W,B}_V (\rho)$.
Specifically, if $\lambda_a< m'_\rho$, $\mu_b < m'_\rho$, respectively, we put
$$\begin{array}{c}
\displaystyle \psi^+ (u) = \widetilde{u_{H^*}} \otimes {\lambda_1< \dots < \lambda_a < m'_\rho\choose
\mu_1< \dots < \mu_b}\\
\\
\displaystyle 
\psi^- (u) = \widetilde{u_{H^*}} \otimes {\lambda_1< \dots < \lambda_a \choose
\mu_1< \dots < \mu_b < m'_\rho}.
\end{array}$$
If $\lambda_a \geq m'_\rho$, resp. $\mu_b\geq m'_\rho$, we say
$\psi^+ (u)$, resp. $\psi^- (u)$ is inconstructible.

\begin{definition}
Assume the above notation, writing $\rho = r^{\text{Sp} (V)}[(s),u]$.
In the case where the semisimple part $s$ of $\rho$'s classification data
has no $-1$ eigenvalues i.e. $\ell = 0$, 
we take $\zeta^{W,B}_V (\rho)$ to be the tensor product
of the irreducible $SO(W,B)$-representation corresponding
to classification data $[(\psi (s)), \psi(u)]$
with the sign $\epsilon (s) \cdot disc (B)$:
\beg{ell=0orthoodd}{\zeta^{W,B}_V (\rho) = r^{\text{O} (W,B)} [(\psi (s)), \psi (u)]^{\epsilon (s) \cdot disc (B)} }
In the case where $s$ has $-1$ eigenvalues i.e.,
$\ell > 0$, writing $\rho = r^{\text{Sp} (V)}[(s), u, \pm 1]$,
we take $\zeta^{W,B}_V(\rho)$ to be the tensor product of the irreducible
$SO(W,B)$-representation corresponding to classification data
$[(\psi(s)), \psi^\pm (u)]$ with the sign $\epsilon (s) \cdot disc(B)$
\beg{ell>0orthoodd}{\zeta^{W,B}_V (\rho) =  r^{\text{O} (W,B)} [(\psi (s)), \psi^\pm (u)]^{\epsilon (s) \cdot disc (B)},}
if the involved $\psi^\pm (u)$ is constructible.
Put
$\zeta^{W,B}_V (\rho) = 0$ if $\psi^\pm (u)$ is inconstructible.
\end{definition}

\vspace{3mm}

\noindent{\bf Case 2:} Suppose $W$ is of even dimension $2m$,
and write $\alpha$ for the sign so that $O(W,B) =O^\pm_{2m} (\F_q)$.
Suppose also that $(V, (W,B))$ is either in the orthogonal stable or metastable range,
meaning that

Consider an irreducible representation $\rho$ of $\text{Sp}(V) = \text{Sp}_{2N} (\F_q)$.
We want to produce $O_{2m}^\pm (\F_q)$-classification data, which we recall
consists of a semisimple conjugacy class $(\psi (s)) \in O_{2m}^\pm (\F_q)$, a unipotent representation
$\psi (u)$ of the (dual) of the 
identity component of
the centralizer of $s$ in $SO_{2m}^\alpha (\F_q)$, and central sign data depending on the eigenvalues
of $\psi (s)$.
We note that since $Res_{O(W,B)} (\omega[ V\otimes W])$ is the
permutation representation $\C W$ tensored with the representation $\epsilon (det)$
(corresponding to the sign representation of $O(W,B)/SO(W,B)$),
part of the central sign data is already
forced. Specifically, as in the case of the symplectic group, we will only need to choose
central sign data for the output representation corresponding to $-1$-eigenvalues.

Write $(s)$ with $s \in SO_{2N+1} (\F_q) = (\text{Sp}_{2N} (\F_q))^*$
for the semisimple part of the classification data for the input 
representation $\rho$ of $\text{Sp}_{2N} (\F_q)$.
Say $s$ has $1$ as an eigenvalue of multiplcity $2p+1$, and write
$Z_{SO_{2N+1} (\F_q)} (s)^\circ = H \times SO_{2p+1} (\F_q)$.
Again, we may remove a single ``forced" $1$ eigenvalue from $s$
to view it as a $2N$ by $2N$
element of the maximal torus $\widetilde{s} \in T$.
Then consider the direct sum with the $2(m-N)$ by $2(m-N)$ identity matrix
$$\psi (s) = \widetilde{s} \oplus I_{2(m-N)},$$
configured to give
a $2m$ by $2m$ matrix that can be considered as an element of $SO(W,B) \subseteq O(W,B)$.
As in Case 2 of the construction of the eta correspondence, each distinct
$SO_{2N+1}(\F_q)$-conjugacy class
$(s)$ gives a distinct $O_{2m}^\alpha (\F_q)$-conjugacy class $\psi (s)$.
We have
\beg{Beta}{Z_{SO_{2m}^\pm (\F_q)} (\psi (s))^\circ = H^* \times SO_{2(N-m+p)}^\sigma (\F_q),}
for a single determined choice of sign $\sigma$
(so that its product with the other signs appearing in $H$ agrees
with the fixed sign $\pm$ of the orthogonal group $\text{O} (W,B)$).

For the unipotent part of the $O_{2m}^\alpha (\F_q)$-classification data we want to produce,
write the unipotent representation $u$ in $\rho$'s classification
data as $u = u_{H^*} \otimes  {\lambda_1 < \dots < \lambda_a \choose
\mu_1< \dots < \mu_b}$ for $u_{H^*} \in \widehat{(H^*)}_u$ and
a symbol ${\lambda_1< \dots < \lambda_a \choose \mu_1< \dots < \mu_b}$
specifying an irreducible unipotent representation of $SO_{2p+1} (\F_q)$. 
Switch rows so that the defect $a-b$ is $1$ mod $4$ (which is
possible since this symbol has odd defect). Let us write
$$m'_\rho = m-N +\frac{a+b-1}{2}.$$
Then, if $\sigma = +$, if $\mu_b < m'_\rho$, putting
$$\psi^+ (u) = \widetilde{u_{H^*}} \otimes {\lambda_1< \dots < \lambda_a \choose
\mu_1< \dots < \mu_b < m'_\rho}$$
gives a unipotent representation of the group $H \times SO_{2(N-m+p)}^+ (\F_q) = 
(Z_{SO_{2m}^\alpha (\F_q)} (\psi (s))^\circ)^*$.
In this case, we put a sign
$\alpha (\sigma)$ to be $+1$ unless $a=b+1$, $m_\rho' \leq \lambda_a$ and
for the maximal $i$ such that $\lambda_{i+1} \neq \mu_i$, we have $\mu_i> \lambda_{i+1}$,
in which case we put $\alpha (\sigma) = -1$.
If $\sigma = +$, $\mu_b \geq m'_\rho$, then say $\psi (u)$ is inconstructible.

Similarly, if $\sigma = -$, if $\lambda_a < m'_\rho$, putting
$$\psi^- (u) = \widetilde{u_{H^*}} \otimes {\lambda_1< \dots < \lambda_a < \mu_\rho'\choose
\mu_1< \dots < \mu_b }$$
gives a unipotent representation of the group $H \times SO_{2(N-m+p)}^- (\F_q) = 
(Z_{SO_{2m}^\alpha (\F_q)} (\psi (s))^\circ)^*$.
In this case, we put a sign
$\alpha (\sigma)$ to always be $+1$.
If $\sigma = -$, $\lambda_a \geq m'_\rho$, then say $\psi (u)$ is inconstructible.

Now, as in Case 2 in the symplectic case, $(s)$ and $(\psi (s))$ have the same multiplicity of
$-1$ eigenvalues. When it does, we take the sign
of the new $\text{O}_{2m}^\pm (\F_q)$-extension
data corresponding to the $-1$ eigenvalues of $\psi (s)$
to be the same as the input representation $\rho$'s corresponding
$\text{Sp}_{2N} (\F_q)$-central sign data.

\begin{definition}
Suppose we are given the above notation.
If $s$ has no $-1$ eigenvalues, define $\zeta^{W,B}_V (\rho)$ to be the irreducible $O(W,B)$-representation
with $O(W,B)$-classification data $[(\psi (s)), \psi (u)]$ with the extension data
$(\alpha^\sigma)$ corresponding to the $1$ eigenvalues of $\psi (s)$
\beg{ZetaDefnOrthoEven}{\zeta^{W,B}_V (\rho) := r^{\text{O}(W,B)}[(\psi (s)), \psi^\sigma (u)]^{(\alpha ( \sigma))},}
for $\sigma$ denoting the sign determined by \rref{Beta}.
If $s$ has $-1$ eigenvalues, writing $\pm 1$ for the central $\text{Sp} (V)$-data of the input representation
$\rho$
\beg{SignZetaDefnOrthoEven}{\zeta^{W,B}_V (\rho) := r^{\text{O}(W,B)}[(\psi (s)), \psi^\sigma (u)]^{(\alpha ( \sigma), \pm 1)},}
again for $\sigma$ denoting the sign determined by \rref{Beta}.
If $\psi (u)$ is inconstructible, we put
$\zeta^{W,B}_{V} (\rho) = 0$.

\end{definition}

Recalling the results of \cite{TotalHoweII},
we found that in the symplectic (resp. orthogonal)
stable range, every irreducible representation $\rho$ of $O(W,B)$ (resp. $\text{Sp}(V)$)
appears in the restriction
$$Res_{O(W,B)} (Res_{\text{Sp}(V) \times O(W,B)} ( \omega [V\otimes W]))$$
(resp. $Res_{\text{Sp}(V)} (Res_{\text{Sp}(V) \times O(W,B)} ( \omega [V\otimes W]))$).
In the present paper's notation,
for $(V,(W,B))$ in the symplectic stable range, for every $\rho \in \widehat{O(W,B)}$,
$$\eta^V_{W,B} (\rho) \neq 0.$$
Similarly, for $(V,(W,B))$ in the orthogonal stable range, for every $\rho \in \widehat{\text{Sp}(V)}$,
$$\zeta_V^{W,B} (\rho) \neq 0.$$

Even in the metastable range, however, the range conditions ensure that we never
need to add a negative coordinate to a Lusztig symbol:
For $(V,(W,B))$ in the symplectic (stable or) metastable range, for every $\rho \in \widehat{O(W,B)}$,
we have
$$N_\rho' >0.$$
For $(V,(W,B))$ in the orthogonal (stable or) metastable range, for every $\rho \in \widehat{\text{Sp}(V)}$,
we have
$$m_\rho' >0.$$

\subsection{Alternating sums of parabolic inductions}\label{AlternatingSumSubsect}
Now we define the alternating sums of parabolic inductions (only needed in the metastable ranges)
which form the coefficients of
the extended eta or zeta correspondences in the restricted oscillator representation.
The description given in this subsection is somewhat technical, 
but in each case,
the principle is to preserve all of the classification data, except for the
symbol factor of the unipotent part that is altered in the construction of the extended eta
or zeta correspondence. We take the sum of the representations
obtained by replacing that symbol by those appearing in the
alternating sum of parabolic inductions of the symbols
obtained by conctenating a final coordinate
to one of the rows as in the construction of $\eta^V_{W,B}$ or $\zeta^{W,B}_V$,
and removing another coordinate in the same row to recover the original symbol's row lengths
(see \rref{TopSymbolCycle} and \rref{BottomSymbolCycle} below).

Suppose we want to consider the decomposition of the restricted oscillator representation
$Res_{\text{Sp}(V) \times O(W,B)} (\omega [ V\otimes W])$. For this subsection,
we now fix our choice of $(V, (W,B))$,
specifically fixing a case of the parity of $dim (W)$ and whether
we consider the decomposition in terms of
an extended eta correspondence (i.e. condition \rref{EtaCorrStMetaStRange} is satisfied)
or an extended zeta correspondence (i.e. condition
\rref{ZetaCorrStMetaStRange} is satisfied).

\vspace{3mm}

Now consider a symbol
\beg{GeneralSymbol}{\theta = {\lambda_1< \dots < \lambda_a \choose \mu_1< \dots < \mu_b}}
of a group $K[-k]$, where $K$ denotes a possible factor of 
the 
identity component of a centralizer of a semisimple element 
$s$ in the dual of the domain of whichever correspondence we have chosen to consider
($O(W,B)$ in the extended eta correspondence or $\text{Sp}(V)$ for the extended
zeta correspondence), which would be altered in the fixed correspondence's construction
(corresponding to $-1$ eigenvalues if $dim (W)$ is fixed to be odd,
and $1$ eigenvalues if $dim (W)$ is fixed to be even).
We use the notation $[-k]$ to refer to the group of 
the same type and subtracting $k$ from the rank (e.g.
for $K= SO(W,B)$, we write $K[-k] = SO(W[-k], B[-k])$).
Let us switch rows in \rref{GeneralSymbol} so that in the construction of
the extended eta or zeta correspondence, if $\theta$ appears as the factor of a unipotent
part $u$ of the classification data for an input representation,
$\phi^+$ or $\psi^+$ concatenates a new coordinate to the top row $\lambda_1< \dots < \lambda_a$
of \rref{GeneralSymbol}  
(this corresponds to a condition on the row lengths $a$, $b$,
which varies depending on the case of the extended eta or zeta correspondence we consider).
We do this in order to consolidate the notation and treat every case at once.

\vspace{2mm}

Fix $k$ and fix some $N' \in \N_0$ such that
\beg{lambdasalpha}{\lambda_1< \lambda_2< \dots < \lambda_a < N' .}
Consider the $1 \leq c \leq a$ such that
$$N' - \lambda_c \leq k < N' - \lambda_{c+1}.$$
Then, for $c \leq q \leq a+1$, let us consider the symbol $\theta^+ (N')_q$
with top row given by \rref{lambdasalpha}
with the $q$th coordinate removed, and unmodified bottom row. For $c\leq q \leq a$ this gives
\beg{TopSymbolCycle}{\theta^+ (N')_q:= {\lambda_1< \dots < \widehat{\lambda_q} < \dots < \lambda_a < N'
\choose
\mu_1 < \dots < \mu_b}.}
In the case of $q=a+1$, we the $(a+1)$th coordinate of \rref{lambdasalpha}
is $N'$ and thus we have $\theta (N' )_q = \theta$.
Each of these symbol has the same row lengths as $\theta$,
and $\theta (N')_q$ therefore describes a unipotent representation of $K[-(k-(N'-\lambda_q))]$.
We may hence consider the alternating sum
\beg{AlternateSymbolInd}{\begin{array}{c}
A_k^+ (\theta, N') :=\\[1ex]  
\displaystyle \bigoplus_{q=c}^{a+1} (-1)^{a+1-q} \cdot \text{Ind}_{P_{k-(N' - \lambda_q)}} (\theta^+ (N')_q)
\end{array}}
where here $P_j$ denote the maximal parabolics in the full group $K$ with Levi factors
$K[-j] \times GL_j (\F_q)$, and we take trivial $GL_j (\F_q)$-action in each induction term.

Similarly, for $N' \in \N_0$ such that $\mu_1< \dots <\mu_b < N'$, considering $1 \leq c \leq b$ such that
$$N' - \mu_c \leq k < N' - \mu_{c+1},$$
for $c \leq q \leq b$, we consider the symbol
\beg{BottomSymbolCycle}{\theta^- (N')_q:= {\lambda_1< \dots < \lambda_a
\choose \mu_1< \dots < \widehat{\mu_q} < \dots < \mu_b < N'}.}
We also put $\theta^- (N')_{b+1} = \theta$. We can then define the alternating sum
\beg{AlternateSymbolIndBottom}{\begin{array}{c}
A_k^- (\theta, N') :=\\[1ex]  
\displaystyle \bigoplus_{q=c}^{b+1} (-1)^{b+1-q} \cdot \text{Ind}_{P_{k-(N' - \mu_q)}} (\theta^- (N')_q).
\end{array}}

We find that \rref{AlternateSymbolInd} and
\rref{AlternateSymbolIndBottom}, in every case we consider, define genuine representations.
In fact, we give a concrete description of which symbols appear in their decompositions
in the Appendix.

Again, we approach the case of the extended eta correspondence first.

\begin{definition}\label{SympCaseAltSumsDefn}
Consider a choice of $V$ and $(W,B)$ in the symplectic (stable or) metastable range.
Suppose we are given $0 \leq k \leq h_W$ and $m'> 0$.
Consider an irreducible representation $\rho \in \widehat{O(W[-k],B[-k])}$. 
\begin{enumerate}
\item Suppose $dim (W) = 2m+1$ is odd. Write the
extended classification data of an irreducible $\text{O} (W[-k], B[-k])$-representation $\rho$ as
$\rho = r^{\text{O}(W[-k],B[-k])}[(s), u]^\alpha$,
and, writing $2\ell$ for the multiplicity of $-1$ as an eigenvalue of $s$, write
$Z_{\text{Sp}_{2(m-k)} (\F_q)} (s)^\circ = H \times \text{Sp}_{2\ell} (\F_q)$, and $u = u_{H^*} \otimes \theta$.
As in the construction of the eta correspondence, we can interpret $s \oplus (-I_{2k})$
as a semisimple element of $\text{Sp}_{2m} (\F_q)$ with the 
identity component of its centralizer of the form $H \times \text{Sp}_{2(\ell + k)} (\F_q)$.
Then define $\mathcal{A}_k (\rho, N')$ to be the $O(W,B)$-representation 
$$\mathcal{A}_k (\rho, N') = \bigoplus_{\chi \subset A_k^\pm (\theta, N')}
r^{\text{O}(W,B)}[(s \oplus (-I_{2k}) ), u_{H^*} \otimes \chi]^\alpha,$$
with the sum running over every distinct irreducible unipotent $\theta$ appearing in $A_k^\pm (\theta, N')$
where the superscript sign is chosen to agree with the sign of $\phi^\pm (u)$ we take when
construction $\eta^V_{W,B} (\rho)$.

\item Suppose $dim (W) = 2m$ is even. Write the
extended classification data of an irreducible $\text{O} (W[-k], B[-k])$-representation $\rho$ as
$\rho = r^{SO(W[-k],B[-k])}[(s), u]^\gamma$ (writing $\gamma$ for the extension sign data
depending on the eigenvalues of $s$).
Say $s$ has $1$ as an eigenvalue of multiplicity $2p$ and write
$Z_{\text{Sp}_{2(m-k)} (\F_q)} (s)^\circ = H \times \text{Sp}_{2p} (\F_q)$, and $u = u_{H^*} \otimes \theta$.
As in the construction of the eta correspondence, we can interpret $s \oplus I_{2k}$
as a semisimple element of $\text{Sp}_{2m} (\F_q)$ with the 
identity component of its centralizer of the form $H \times \text{Sp}_{2(p + k)} (\F_q)$.
Then define $\mathcal{A}_k (\rho, N')$ to be the $O(W,B)$-representation such that
$$\mathcal{A}_k (\rho, N') = \bigoplus_{\chi \subset A_k^\pm (\theta, N')}
r^{\text{O} (W,B)} [(s \oplus I_{2k}), u_{H^*} \otimes \chi]^{ \gamma} $$
with the sum running over every distinct irreducible unipotent $\theta$ appearing in $A_k^\pm (\theta, N')$
where the superscript sign is chosen to agree with the sign of $\phi^\pm (u)$ we take when
construction $\eta^V_{W,B} (\rho)$.

\end{enumerate}

\end{definition}

Similarly, consider the orthogonal metastable range. We note that we still need the separate
the cases of the parity of the dimension of the orthogonal space $W$, though the role of
$W$ is somewhat hidden in the notation.

\begin{definition}\label{OrthoCaseAltSumsDefn}
Consider a choice of $V$ and $(W,B)$ in the orthogonal (stable or) metastable range,
and write $dim (V) =2N$.
Suppose we are given $0 \leq k \leq N$ and $m'> 0$.
Consider an irreducible representation $\rho \in \widehat{\text{Sp}(V[-k])}$. 
\begin{itemize}
\item Suppose $dim (W) = 2m+1$ is odd. Write the
extended classification data of an irreducible $\text{Sp} (V[-k])$-representation $\rho$ as
$\rho = r^{\text{Sp} (V[-k])}[(s), u, \pm 1]$,
(omitting the central sign data $\pm 1$ if $s$ has no $-1$ eigenvalues).
Writing $2\ell$ for the multiplicity of $-1$ as an eigenvalue of $s$, say
$Z_{SO_{2N+1} (\F_q)} (s)^\circ = H \times SO_{2\ell}^\pm (\F_q)$, and $u = u_{H^*} \otimes \theta$.
Then define $\mathcal{A}_k (\rho, m')$ to be the $\text{Sp}(V)$-representation
$$\mathcal{A}_k (\rho, m') = \bigoplus_{\chi \subset A_k^\pm (\theta, m')}
r^{\text{Sp} (V)}[(s\oplus (-I)_{2k}), u_{H^*} \otimes \chi, \pm 1]$$
with the sum running over every distinct irreducible unipotent $\theta$ appearing in $A_k^\pm (\theta, m')$,
where the superscript sign is chosen to agree with the sign of $\psi^\pm (u)$ we take when
construction $\zeta^V_{W,B} (\rho)$, and
where the central sign in the classification data of each summand is the same as $\rho$'s.

\vspace{3mm}

\item Suppose $dim (W) = 2m$ is even. rite the
extended classification data of an irreducible $\text{Sp} (V[-k])$-representation $\rho$ as
$\rho = r^{\text{Sp} (V[-k])}[(s), u, \pm 1]$,
(omitting the central sign data $\pm 1$ if $s$ has no $-1$ eigenvalues).
Writing $2 p$ for the multiplicity of $1$ as an eigenvalue of $s$, write
$Z_{SO_{2N+1} (\F_q)} (s)^\circ = H \times SO_{2p+1} (\F_q)$, and $u = u_{H^*} \otimes \theta$.
Then define $\mathcal{A}_k (\rho, m')$ to be the $\text{Sp}(V)$-representation
$$\mathcal{A}_k (\rho, m') = \bigoplus_{\chi \subset A_k^\pm (\theta, m')}
r^{\text{Sp} (V)} [(s\oplus I_{2k}), u_{H^*} \otimes \chi, \pm 1] $$
with the sum running over every distinct irreducible unipotent $\theta$ appearing in $A_k^\pm
(\theta, m')$, where the superscript sign is chosen to agree with the sign of $\psi^\pm (u)$ we take when
construction $\zeta^V_{W,B} (\rho)$, 
where the central sign in the classification data of each summand is the same as $\rho$'s.

\end{itemize}

\end{definition}

\subsection{The main statement}\label{ExplicitTheoremSubsect}

Now that we have established the necessary notation to describe the terms of the decomposition
of a restricted oscillator representation claimed in Theorem \ref{IntroductionTheorem}, we
may restate it concretely.

The first part of our main result Theorem \ref{IntroductionTheorem},
extending the eta correspondence, can be explicitly restated as:

\begin{theorem}\label{ExplicitTheoremSymp}
Suppose $(V, (W,B))$ is in the symplectic metastable range
(which we recall means $\lceil \text{dim} (W)/2 \rceil \leq \text{dim} (V)/2 < \text{dim} (W)$). 
Then
\beg{EtaThmDecompFull}{\begin{array}{c}
Res_{\text{Sp}(V) \times \text{O}(W,B)} ( \omega[ V \otimes W]) = \\[1ex]
\displaystyle
\bigoplus_{k=0}^{h_{B}} \bigoplus_{\rho \in \widehat{O(W[-k], B[-k])} }
\mathcal{A}_k ( \rho, N'_\rho  )  \otimes \eta^V_{W,B} (\rho)
\end{array}
}
where $\mathcal{A} (\rho, N'_\rho)$ is considered as a representation of $O(W,B) $.
\end{theorem}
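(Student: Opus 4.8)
\textbf{Proof plan for Theorem \ref{ExplicitTheoremSymp}.} The plan is to reduce the statement to the stable-range decomposition \rref{TotalHoweIStSymp} via interpolation in the semisimple pre-Tannakian category setting, and then to use the interpolation functor to transport coefficients. First I would set up the interpolated family: fixing $(W,B)$ and letting $\dim(V)=2N$ vary, the restriction $Res_{Sp(V)\times O(W,B)}(\omega[V\otimes W])$ as a $O(W,B)$-representation with $Sp(V)$-coefficients should be controlled by a single object living over the interpolated category (of the type constructed in \cite{OscillatorRepsFull, InterpolatedSchemes}). In the symplectic stable range (\rref{EtaCorrStMetaStRange} with strict inequality large), \cite{TotalHoweI} gives the clean decomposition \rref{TotalHoweIStSymp} into terms $\eta^V(\rho)\otimes \text{Ind}^{P_k^B}(\rho\otimes\epsilon(\det))$. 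The key observation is that the combinatorial recipe defining $\eta^V_{W,B}$ (Definitions \ref{EtaOddCaseDefn}, \ref{EtaEvenCaseDefn}) is literally the same recipe that works stably; the only failure in the metastable range is that the concatenated Lusztig symbol coordinate $N'_\rho$ may collide with the last existing coordinate $\lambda_a$ or $\mu_b$, producing a symbol that is either illegal (output $0$) or non-monotone (resolved to a different genuine symbol).

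\medskip
The main steps, in order: (1) Establish that the interpolation functor $\mathcal{I}$ from the pre-Tannakian interpolated category to genuine $Sp(V)\times O(W,B)$-representations, specialized at each relevant rank, carries the ``universal'' stable decomposition to the actual restricted oscillator representation; this is the analogue for the metastable range of the results of \cite{TotalHoweI, TotalHoweII} and should be proved in Section \ref{InterpolationSection} as asserted. (2) Track what happens to a coefficient $\text{Ind}^{P_k^B}(\rho\otimes\epsilon(\det))$ under specialization: the parabolic induction term, which is a genuine summand stably, becomes — when $N$ drops into the metastable range — not a single induction but the alternating sum appearing in \rref{AlternateSymbolInd}/\rref{AlternateSymbolIndBottom}, because the interpolated object's character, when evaluated at a non-generic rank, expands into exactly the alternating sum $A_k^\pm(\theta,N')$ dictated by the Euler-characteristic / resolution behavior of parabolic induction at boundary ranks. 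This is precisely why $\mathcal{A}_k(\rho,N'_\rho)$ is defined by replacing the altered symbol factor $\theta$ by those symbols occurring in $A_k^\pm(\theta,N'_\rho)$. (3) Match signs: the superscript $\pm$ on $A_k^\pm$ must be the same $\pm$ as in $\phi^\pm(u)$, and the $\Z/2 = O(W,B)/SO(W,B)$-action on $\mathcal{A}_k(\rho,N'_\rho)$ must match $\rho$'s, which follows by keeping track of the $\epsilon(\det)$ twist through the interpolation. (4) Finally, invoke the positivity statement (stated in Theorem \ref{IntroductionTheorem} and to be proved via the genuineness result of Section \ref{InterpolationSection}): each $\mathcal{A}_k(\rho,N'_\rho)$ is an honest representation, so \rref{EtaThmDecompFull} is an equality of genuine representations, not merely of virtual characters.

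\medskip
I expect the main obstacle to be step (2): showing that the correct replacement for the stable coefficient $\text{Ind}^{P_k^B}(\rho\otimes\epsilon(\det))$ in the metastable range is exactly the alternating sum $A_k^\pm(\theta,N'_\rho)$ of parabolic inductions, with the precise truncation index $c$ determined by $N'_\rho-\lambda_c\le k<N'_\rho-\lambda_{c+1}$. This requires a careful analysis of how induction products of unipotent representations (indexed by symbols) behave when the rank parameter is pushed below the stable threshold — essentially a Pieri-type / Littlewood-Richardson computation for the symbol combinatorics, combined with the dimension vanishing suggested by Lusztig's formula for symbols with would-be negative coordinates. The range conditions of Definition \ref{MetaStablDefn} are exactly what guarantee $N'_\rho>0$ (as noted after the definition), so no genuinely negative coordinate arises and the alternating sum stays within legal territory; verifying that the alternating sum then collapses to the positive-coefficient combination claimed is the content of the Appendix and of Section \ref{AlternatingSumSection}, which I would cite rather than reprove here. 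The rest — steps (1), (3), (4) — is bookkeeping once the interpolation machinery of Section \ref{InterpolationSection} is in place.
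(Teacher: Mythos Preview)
Your overall strategy---interpolate the stable decomposition \rref{TotalHoweIStSymp} to a pre-Tannakian category, specialize at $t=N$, and extract the genuine decomposition---is exactly the paper's approach in Section~\ref{InterpolationSection}. But your step~(2) misidentifies the mechanism, and this matters for actually carrying out the proof.

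The $O(W,B)$-coefficient $\text{Ind}^{P_k^B}(\rho\otimes\epsilon(\det))$ does not depend on $N$ at all; it is a fixed genuine $O(W,B)$-representation and does not ``become'' an alternating sum under specialization. What degenerates is the $Sp(V)$-side: the interpolated object $\eta^t_{W[-k],B[-k]}(\rho)$ at $t=N$ may correspond to an \emph{illegal} (non-monotone) symbol, whose dimension equals $(-1)^{a-i}$ times that of the genuine representation obtained by reordering the coordinates (the ``true permutation''). The paper makes this precise by constructing maps $\Phi:\mathscr{C}^{gen}_\rho(N)\to\widetilde{\mathscr{C}}^{int}_\rho(N)$ (lifting idempotents) and $\Psi:K(\widetilde{\mathscr{C}}^{int}_\rho(N))\to K(\mathscr{C}^{gen}_\rho(N))$ (signed true permutation), and proving $\Psi\circ K(\Phi)=\text{Id}$. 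Applying $\Psi$ to the interpolated decomposition \rref{InterpolatedEtaCorrStatement} then yields a signed sum in $K(Rep(Sp(V)\times O(W,B)))$; the alternating sum $\mathcal{A}_k(\rho,N'_\rho)$ arises by \emph{regrouping} this sum according to which genuine $Sp(V)$-irreducible each term contributes. Concretely, a single genuine $\eta^V(\rho')$ receives contributions from several different pairs $(k^{(q)},\rho^{(q)})$ in the interpolated decomposition, with alternating signs, and the $O(W,B)$-coefficients $\text{Ind}^{P_{k^{(q)}}}(\rho^{(q)})$ of those contributions assemble into \rref{AlternateSymbolInd}. This regrouping, not any degeneration of parabolic induction itself, is what you need to track; your ``Euler-characteristic/resolution'' intuition and the Pieri-type analysis you mention are relevant only later, in Section~\ref{AlternatingSumSection}, for resolving the alternating sum into irreducibles, not for proving \rref{EtaThmDecompFull} itself.
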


Similarly, the second part extending the zeta correspondence can be restated as
\begin{theorem}\label{ExplicitTheoremOrtho}
Suppose $(V, (W,B))$ is in the orthogonal metastable range
(which we recall means $h_W \leq \text{dim} (V) < \text{dim} (W)$
where $h_W$ denotes the dimension of a maximal isotropic subspace of $W$ with respect to $B$). 
Then
\beg{ZetaThmDecompFull}{\begin{array}{c}
Res_{\text{Sp}(V) \times \text{O}(W,B)} ( \omega[ V \otimes W]) = \\[1ex]
\displaystyle
\bigoplus_{k=0}^N \bigoplus_{\rho \in \widehat{\text{Sp}(V[-k])} }
\zeta_V^{W,B} (\rho)  \otimes
\mathcal{A}_k ( \rho, m'_\rho )
\end{array}
}
where $\mathcal{A} (\rho, m'_\rho)$ is considered as a representation of $\text{Sp}(V)$.
\end{theorem}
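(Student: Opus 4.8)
The plan is to deduce Theorem~\ref{ExplicitTheoremOrtho} from Theorem~\ref{ExplicitTheoremSymp} by the same interpolation-of-categories argument used for the stable ranges in \cite{TotalHoweI, TotalHoweII}, together with the combinatorial resolution of the alternating sums $A_k^\pm(\theta, m')$ postponed to the Appendix. Concretely, I would work in the interpolated semisimple pre-Tannakian category attached to the orthogonal side, where the dual pair $(Sp(V), O(W,B))$ is replaced by a family parametrized by $\dim(W)$, and the restriction $Res_{Sp(V)\times O(W,B)}(\omega[V\otimes W])$ becomes an object whose decomposition can be computed uniformly. The orthogonal stable range result \rref{TotalHoweIStOrtho} gives the decomposition as a sum of $\zeta_V^{W,B}(\rho)\otimes \text{Ind}^{P_k^V}(\rho\otimes\epsilon(\det))$; the point is that in the metastable range the parabolic induction terms $\text{Ind}^{P_k^V}$ specialize to objects that are no longer irreducible on the nose, but rather decompose according to the branching rules encoded in $A_k^\pm$. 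So the first step is to set up the interpolation functor and identify the image of the stable-range decomposition under specialization at the relevant value of $\dim(W)$.

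The second step is to track what happens to each summand. For the $\zeta$ factor, the construction in Subsection~\ref{ExtendedZetaSubsect} was designed precisely so that the Lusztig data manipulations (removing a forced $1$-eigenvalue, adjoining $\pm I_{2(m-N)}$, concatenating $m'_\rho$ to a symbol row) carry over verbatim from the stable to the metastable range, with the sole new phenomenon being inconstructibility, which is exactly the case where the interpolated object vanishes — matching the convention $\zeta_V^{W,B}(\rho) = 0$. For the ``coefficient'' factor, I would show that the specialization of $\text{Ind}^{P_k^V}(\rho\otimes\epsilon(\det))$ is naturally identified with $\mathcal{A}_k(\rho, m'_\rho)$ as defined in Definition~\ref{OrthoCaseAltSumsDefn}: the parabolic induction in the interpolated category, when evaluated, produces the alternating sum of honest parabolic inductions of the symbols $\theta^\pm(m')_q$ obtained by the concatenate-and-delete operations \rref{TopSymbolCycle}, \rref{BottomSymbolCycle}. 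This is where one invokes that \rref{AlternateSymbolInd}, \rref{AlternateSymbolIndBottom} define genuine (non-virtual) representations with positive integral coefficients, and that their explicit resolution into irreducibles is known (the Appendix). The range condition $m'_\rho > 0$, already noted at the end of Subsection~\ref{ExtendedZetaSubsect}, guarantees we never attempt an illegal negative concatenation.

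The main obstacle is the middle step: proving that the interpolated parabolic induction genuinely specializes to the alternating sum $A_k^\pm$ rather than to some other correction term, and in particular that the signs and the truncation index $c$ (where $m' - \mu_c \leq k < m' - \mu_{c+1}$) come out correctly. This requires a careful analysis of how a maximal parabolic $P_k^V$ in the interpolated setting degenerates — effectively, one must understand the ``defect'' between the formal dimension computed by Lusztig's symbol formula and the actual dimension when a symbol would become non-monotone, and show that the Euler-characteristic-style alternating sum exactly accounts for this defect. I expect this to follow from the dimension formula for symbols quoted in Subsection on unipotent data combined with an inclusion-exclusion over which coordinate of the lengthened row ``absorbs'' the inducing $GL$-factor, but verifying that the combinatorics matches Definition~\ref{OrthoCaseAltSumsDefn} term-by-term is the delicate part. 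Once that identification is in hand, assembling \rref{ZetaThmDecompFull} is just collecting terms over $k$ and $\rho$, exactly paralleling the proof of Theorem~\ref{ExplicitTheoremSymp}, with the roles of $Sp(V)$ and $O(W,B)$ (and of the eta/zeta correspondences) interchanged.
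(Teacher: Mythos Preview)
Your overall strategy (interpolate the target-group side, use the stable-range decomposition in the interpolated category, then specialize and clean up) is the paper's strategy, and the parallelism with Theorem~\ref{ExplicitTheoremSymp} is exactly right. But you have misidentified \emph{which} objects degenerate under specialization, and consequently how the alternating sum $\mathcal{A}_k(\rho, m'_\rho)$ actually arises.

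In the interpolated decomposition analogous to \rref{InterpolatedEtaCorrStatement}, the terms are $\text{Ind}^{P_j^V}(\rho\otimes\epsilon(\det))\boxtimes \zeta^t(\rho)$ with $\rho\in\widehat{Sp(V[-j])}$. Here $V$ is fixed, so every $\text{Ind}^{P_j^V}(\rho\otimes\epsilon(\det))$ is already a genuine $Sp(V)$-representation; nothing on that side is being interpolated or specialized. What lives in the interpolated orthogonal category is $\zeta^t(\rho)$, described by formal Lusztig data in which the relevant symbol has $m'_\rho$ concatenated to one row and may be non-monotone at the metastable value. The passage to genuine $O(W,B)$-representations is via the map $\Psi$ (signed true permutation of the symbol coordinates), and the key point is that \emph{distinct} pairs $(j,\rho)$ can have $\Psi(\zeta^t(\rho))$ equal to the same $\pm\pi$ for a single genuine irreducible $\pi$. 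Concretely, the $(j,\rho)$ that collide onto $\pi=\zeta^{W,B}_V(\rho')$ are exactly those where $\rho$ differs from $\rho'$ by the concatenate-and-delete operations of \rref{TopSymbolCycle}, \rref{BottomSymbolCycle}; summing their (genuine) induction coefficients with the signs supplied by $\Psi$ is what \emph{produces} the alternating sum $\mathcal{A}_k(\rho', m'_{\rho'})$. So your sentence ``the specialization of $\text{Ind}^{P_k^V}(\rho\otimes\epsilon(\det))$ is naturally identified with $\mathcal{A}_k(\rho, m'_\rho)$'' is not a step one can prove, because it is not what happens: $\mathcal{A}_k$ is not the specialization of one induction but the regrouping of several genuine inductions whose $\zeta$-partners have collapsed onto the same irreducible. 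The inclusion-exclusion you anticipate in your ``main obstacle'' paragraph is real, but it takes place on the $\zeta$-side (permuting symbol coordinates and tracking sign changes in the dimension formula), not inside a single parabolic induction. Once you relocate the degeneration to the correct side, the argument goes through exactly as in Subsection~\ref{IntEtaProofSubsect} with the roles swapped.
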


\section{Interpolation and proof of the metastable correspondences}\label{InterpolationSection}

In this section, we describe how the results of \cite{TotalHoweI, TotalHoweII}
for stable range reductive dual pairs
can be {\em interpolated}, which we can use to conclude Theorem \ref{IntroductionTheorem}.
We focus on the case of the eta correspondence (interpolating the symplectic stable range result).
The case of the zeta correspondence
is similar.

In Subsection \ref{InterpolatedBasicCats}, we define 
interpolated representation categories $\mathfrak{Rep} (\text{Sp}_{2t} (\F_q)$ and
$\overline{\mathfrak{Rep}} (\text{Sp}_{2t} (\F_q))$,
modelling symplectic groups of non-integer rank $t$.
$\mathfrak{Rep} (\text{Sp}_{2t} (\F_q))$ is defined to be
tensor generated by a basic object corresponding to the standard permutation representation, and
$\overline{\mathfrak{Rep}} (\text{Sp}_{2t} (\F_q))$ is taken to be
generated by basic objects corresponding to the oscillator representations, respectively. While
$\mathfrak{Rep} (\text{Sp}_{2t} (\F_q))$ is a more classical construction that
can be studied, for example, using oligomorphic groups (see \cite{HarmanSnowden}),
they do not actually contain objects corresponding to oscillator representations.
For our purposes, the
``finer" cateory $\overline{\mathfrak{Rep}} (\text{Sp}_{2t} (\F_q))$ is necessary.
These categories form semisimple abelian pre-Tannakian tensor categories for generic complex
values of $t$. However, for certain values, including $t=N \in \mathbb{N}_0$, they are not themselves semisimple, though they are ``semisimplifiable," and quotienting
out a certain class of ``negligible" morphisms does give semisimple pre-Tannakian categories.

In Subsection \ref{InterpolatedIsotyp}, we define subcategories
of $\mathscr{C}_B^{int} (t)\subseteq\overline{\mathfrak{Rep}} (\text{Sp}_{2t} (\F_q))$ isolating the ranges
of objects appearing in a certain tensor power of oscillator representations
(a ``partial pseudo-abelian envelope") corresponding to a specific choice of orthogonal
space $(W,B)$. For a fixed irreducible representation $\rho \in \widehat{O(W,B)}$, we 
further consider a subcategory $\mathscr{C}_\rho^{int} (t)$ detecting objects that
interpolate representations of $\text{Sp}_{2N} (\F_q)$ symplectic stable with $O(W,B)$
whose tensor product with $\rho$ appears in the restriction of $\omega [ \F_q^{2N} \otimes W]$.
We denote their images under semisimplification
by $\widetilde{\mathscr{C}}_B^{int} (t)$, $\widetilde{\mathscr{C}}_\rho^{int} (t)$.
In Subsection \ref{IntEtaSubsect}, we enumerate the objects of this category
in terms of ``formal Lusztig symbols" and write down their dimensions.
An interpolated decomposition of the restricted oscillator representation holds in these
categories (Theorem \ref{InterpolatedSemiSimpEtaThm} below).

In Subsection \ref{IntEtaProofSubsect}, we find that, at $t=N$ 
with $\text{Sp}_{2N} (\F_q)$ in the symplectic metastable
range with $O(W,B)$, the relationship
between an interpolated category $\widetilde{\mathscr{C}}_\rho^{int} (N)$ and
the subcategory of genuine $\text{Sp}_{2N} (\F_q)$-representations $\pi$ such that
$\rho \otimes \pi$ appears in the restricted oscillator representation, gives
that the decomposition of the restricted oscillator representation as a genuine representation
can be derived from the interpolated statement after ``cancelling terms."
Then we check that simplifying the cancelled terms gives the claimed decomposition
as a genuine representation of the symplectic group.

In Subsection \ref{InductiveConstruction}, we describe an application
of the statement to give an inductive procedure to explicitly construct
representations corresponding to classification data.

\subsection{Interpolated representation categories}\label{InterpolatedBasicCats}
First, to begin discussing interpolated representation theory,
we briefly recall P. Deligne's cosntruction of the category of representations of a general linear group $GL_c$
for $c \notin \Z$ (see \cite{DeligneSymetrique, DeligneTensor, DeligneMilne}):

This interpolation is based on the fact that in $Rep (GL_N)$, denoting the permutation representation
``basic object" of dimension $N$ by $X$, we have a stable structure of the endomorphism algebras
\beg{EndGLNIsSigman}{End_{GL_N} (X^{\otimes n}) \cong \C \Sigma_n}
when $N>> n$. 
However, in the genuine category $Rep (GL_N)$ of representations of $GL_N (\C)$, when the tensor
power degree becomes large compared to $N$, certain
Schur functors predicted to occur in $X^{\otimes n}$ have dimension $0$,
causing \rref{EndGLNIsSigman} to fail.
However, consider a formal basic generating object $X$ of dimension $c\in \C\smallsetminus \Z$,
we may construct a diagrammatic
category where the endomorphism algebra of $X^{\otimes n}$ is always the group algebra
$\C\Sigma_n$ (since the polynomial dimensions of the ``Schur functors" is never $0$
when applied to $dim (X) = c$). Formally adding direct sums and taking a pseudo-abelian envelope
gives a category we denote here by $\mathfrak{Rep} (GL_c)$, which is semisimple for $c\in \C\smallsetminus\Z$.

Attempting to apply this construction at $c= N$ gives a non-semisimple category $\mathfrak{Rep} (GL_N)$, which
for example, has simple objects of dimension $0$. However by applying a procedure
of {\em semisimplification} (see \cite{EGNOBook}), which is designed to eliminate these simple objects,
outputs a semisimple category which is precisely the genuine category of representation $Rep (GL_N)$.

One may also attempt to define an interolated category $\mathfrak{Rep} (\text{Sp}_{2t} (\F_q))$
with the standard permutation representation $\C V_t$ of dimension $q^{2t}$ as a basic
tensor-generating object, defined so that
\beg{IntHomRepSp}{\begin{array}{c}
Hom_{\mathfrak{Rep} (\text{Sp}_{2t} (\F_q)} ((\C V_t)^{\otimes m}, (\C V_t)^{\otimes n}) =\\[1ex]
Hom_{Rep (\text{Sp}_{2N} (\F_q))} ((\C V_N)^{\otimes m}, (\C V_N)^{\otimes n}),
\end{array}}
for a large enough $N>> m,n$ writing $V_N$ for the $q^{2N}$-dimensional 
underlying symplectic space
$\text{Sp}(V_N) = \text{Sp}_{2N} (\F_q)$.
The semisimplification of this category at $t=N$ is known to give
a semisimple pre-Tannakian category. However, it is not equivalent to the genuine representation
category $Rep (\text{Sp}_{2N} (\F_q))$. For example, there is no object of dimension $q^t$ corresponding
to the oscillator representation.

Another commonly considered interpolated category $\mathfrak{Rep} (GL_t (\F_q))$
is defined similarly to be tensor-generated by the standard representation of dimension $q^t$
written as $\C \mathbb{F}_q^t$, with
\beg{IntHomRepOrtho}{\begin{array}{c}
Hom_{\mathfrak{Rep} (GL_{t} (\F_q)} ((\C \F_q^t)^{\otimes m}, (\C \F_q^t)^{\otimes n}) =\\[1ex]
Hom_{Rep (GL_{N} (\F_q))} ((\C \F_q^N)^{\otimes m}, (\C \F_q^N)^{\otimes n}),
\end{array}}
for a large enough $N>> m,n$.
Similarly, we may define an interpolated category $\mathfrak{Rep} (O_t (\F_q))$
with basic object corresponding to the signed $\C W \otimes \epsilon (det)$, 
writing $\epsilon (det)$ for the sign representation on the center $\mu_2$ corresponding to $det$.
Writing the basic object
$\C W_t \otimes \epsilon (det)$ of $\mathfrak{Rep} (O_t (\F_q))$, we set
$Hom$-spaces between its tensor powers to be equal
to what they would be in an orthogonal group of high enough rank
(which may be chosen to be even or odd),
as in \rref{IntHomRepSp}, \rref{IntHomRepOrtho}.
Unlike in the symplectic case, modelling the (twisted) permutation representation
as the basic generating object in $\overline{\mathfrak{Rep}} (O_t (\F_q))$ will be enough for our purposes,
recalling that, in the case of $2$-dimensional $V$ with the standard symplectic form, we have
$$Res_{O(W,B)} (\omega [ \F_q^2 \otimes W]) = \C W^-$$
denoting the permutation representation $\C W$ tensored with $\epsilon (det)$ i.e.
the sign representation of $O(W,B)/SO(W,B)$,
(which can be seen from restricting first to $GL(W)$).

Now, both $\mathfrak{Rep} (GL_t(\F_q))$ aand $\overline{\mathfrak{Rep}} (O_{t} (\F_q))$
are known to, after semisimplification when needed at natural number values of $t$,
give semisimple categories. This can both be proved directly
(see, for examples of this method, \cite{DeligneSymetrique, DeligneTensor, VectorDelannoy}),
or be approached using
the theory of oligomorphic groups of A. Snowden and N. Harman (see \cite{HarmanSnowden}).

However, we need to define a different interpolated tensor category $\overline{\mathfrak{Rep}} (\text{Sp}_{2t} (\F_q))$
with the oscillator representations as basic objects (since the structure of endomorphism algebras
and homomorphism modules between tensor products of oscillator representation is stable
when the dimension of the underlying symplectic space is large enough compared to the tensor product
degrees).
For more details, see \cite{OscillatorRepsFull}.
When attempting to put $t = N \in \mathbb{N}$, as in the case of the interpolated
representation categories of the symmetric group, $\overline{\mathfrak{Rep}} (\text{Sp}_{2N} (\F_q))$
is not a semisimple category (since, again, it has simple objects
of dimension $0$).

Consider a category $\overline{\mathfrak{Rep}} (\text{Sp}_{2t} (\F_q))_0$
with objects $\boldsymbol \omega_a^{\otimes m} \otimes \boldsymbol \omega_b^{\otimes n}$, with $Hom$-spaces between objects  defined to be, as $\C$-vector spaces
$$\begin{array}{c}
Hom_{\overline{\mathfrak{Rep}} (\text{Sp}_{2t} (\F_q))_0} (\boldsymbol \omega_a^{\otimes  m} \otimes \boldsymbol \omega_b^{\otimes n} , \boldsymbol \omega_a^{\otimes  p} \otimes \boldsymbol \omega_b^{\otimes \ell}):=\\[1ex]
Hom_{\text{Sp}_{2N} (\F_q)} (\omega_a^{\otimes  m} \otimes \omega_b^{\otimes n} , \omega_a^{\otimes  p} \otimes \omega_b^{\otimes \ell})
\end{array}$$
for $N>> m,n,p,l$. Tensor products of morphisms and the actions of the bijections
on the coordinates of the different tensor factors are defined in the obvious way from
$Rep (\text{Sp}_{2N} (\F_q))$ for a large enough $N$.
To construct a category (and involve the constant $t$), we also need to define
a partial trace operation corresponding to matching factors of 
the generating objects $\boldsymbol \omega_a$. In this case, it suffices to define
a trace operation for endomorphisms of each $\boldsymbol \omega_a$. We do this by considering
$$\begin{array}{c}
End_{\overline{\mathfrak{Rep}} (\text{Sp}_{2t} (\F_q))_0} (\boldsymbol \omega_a) = End_{\text{Sp}_{2N} (\F_q)} (\omega_a) \\[1ex]
= Hom_{\text{Sp}_{2N} (\F_q)} (1, \C V_N) \cong (\C V_N)^{\text{Sp}_{2N}},
\end{array}$$
which has a basis consisting of $(0)$ and $\sum_{v\neq 0\in V_N} (v)$, where we put
$$tr ((0)) = q^t, \hspace{5mm} tr(\sum_{v\neq 0 \in V_N} (v)) = 0.$$
Composition can be defined with a combination of tensor product, permutation, and partial trace (for
more details, see \cite{OscillatorRepsFull}). 
This defines a $\C$-linear category $\overline{\mathfrak{Rep}} (\text{Sp}_{2t} (\F_q))_0$.

We then define the category $\overline{\mathfrak{Rep}} (\text{Sp}_{2t} (\F_q))$ by 
first formally adding direct
sums to $\overline{\mathfrak{Rep}} (\text{Sp}_{2t} (\F_q))_0$, and then applying
a pseudo-abelian envelope, adding new objects defined as the images of
idempotents in the endomorphism algebras of the objects
$\boldsymbol \omega_a^{\otimes m} \otimes \boldsymbol \omega_b^{\otimes n}$.

\vspace{3mm}

In each case, the interpolated category is constructed from the data of
a system of $Hom$-spaces between tensor powers of the basic object, with operations
of permutation action, tensor product, and partial trace. This data can in fact be
captured in the universal algebra structure of a {\em T-algebra}. To describe a
$\C$-linear additive category with strong duality and associative commutative unital tensor product
generated by a basic object $X$,
its corresponding T-algebra $\mathcal{T}$ consists of vector spaces for every pair of finite sets $S,T$
$$\mathcal{T}_{S,T} = Hom  (X^{\otimes S}, X^{\otimes T}),$$
with appropriate functoriality over the category of finite sets, 
linear partial trace operations corresponding to partial bijections
between $S$ and $T$ and tensor product operations corresponding the set disjoint union, with appropriate axioms.
Here, we denote the T-algebra corresponding to $\mathfrak{Rep} (GL_t (\F_q)$
with basic object corresponding to the standard representation
$X=\C \F_q^t$ by $\mathcal{T} [GL_{t} (\F_q)]$ and the T-algebra
corresponding to $\overline{\mathfrak{Rep}} (\text{Sp}_{2t} (\F_q)$
with basic object $X = \boldsymbol \omega_a \otimes \boldsymbol \omega_b$
by $\overline{\mathcal{T}}[\text{Sp}_{2t} (\F_q)]$.
For more details, see \cite{OscillatorRepsFull}.

\vspace{3mm}

Let us denote the {\em semisimplification} of a
$\C$-linear additive category with strong duality and associative commutative unital tensor product
$\mathscr{C}$ by $\mathscr{S} (\mathscr{C})$.
Recall that semisimplification refers to a construction
quotienting out {\em negligible morphisms}, such as simple idempotents
with trace $0$, \cite{EGNOBook}.

We recall a result of \cite{OscillatorRepsFull}.

\begin{proposition} 
In every case of $t$, the semisimplification of the category
$\overline{\mathfrak{Rep}} (\text{Sp}_{2t} (\F_q))$ is a semisimple (and, in particular,
abelian) pre-Tannakian category.
For values of $t\in \C$ such that $q^t \neq \pm q^n$ for $n \in \N_0$, the category
$\overline{\mathfrak{Rep}} (\text{Sp}_{2t} (\F_q))$ itself is semisimple.
\end{proposition}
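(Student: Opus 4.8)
The plan is to reduce the statement to two already-known facts about the interpolated category $\overline{\mathfrak{Rep}}(Sp_{2t}(\F_q))$: first, that it is a $\C$-linear additive symmetric monoidal category with strong duality (equivalently, that its defining $T$-algebra $\overline{\mathcal{T}}[Sp_{2t}(\F_q)]$ satisfies the required axioms), and second, that the trace of the identity of each generating object $\boldsymbol\omega_a$, and more generally of the objects in the pseudo-abelian envelope, is given by evaluating the relevant ``stable'' dimension polynomials at $q^t$. Granting these, for generic $t$ one wants to show no nonzero object is negligible, so that no quotient is needed; for all $t$ one invokes the general semisimplification machinery of \cite{EGNOBook} to conclude that $\mathscr{S}(\overline{\mathfrak{Rep}}(Sp_{2t}(\F_q)))$ is semisimple, and then one verifies the extra pre-Tannakian axioms (rigidity, $\mathrm{End}(\mathbf 1)=\C$, integer or at least well-behaved categorical dimensions).

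First I would recall from \cite{OscillatorRepsFull} that $\overline{\mathfrak{Rep}}(Sp_{2t}(\F_q))$ is obtained from $\overline{\mathfrak{Rep}}(Sp_{2t}(\F_q))_0$ by adjoining direct sums and passing to the pseudo-abelian (Karoubi) envelope, so it is automatically a Karoubian $\C$-linear symmetric monoidal category; duality is inherited because each generating object $\boldsymbol\omega_a$ is self-dual at every finite level $Sp_{2N}(\F_q)$ and the evaluation/coevaluation morphisms are compatible with the stabilization maps. The key computational input is that for an idempotent $e\in\mathrm{End}(\boldsymbol\omega_a^{\otimes m}\otimes\boldsymbol\omega_b^{\otimes n})$ cutting out an object $Y$, the categorical dimension $\dim(Y)=\mathrm{tr}(e)$ equals a fixed element of $\Z[q^t,q^{-t}]$ independent of $N$ once $N\gg m,n$ — this follows from the stability of the $\mathrm{Hom}$-spaces and of the partial-trace operation, together with the normalization $\mathrm{tr}((0))=q^t$, $\mathrm{tr}(\sum_{v\neq 0}(v))=0$. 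Then, for $t$ with $q^t\neq\pm q^n$, these dimension polynomials, being (up to sign) products of factors of the form $q^{2t}-q^{2j}$ and $q^t\pm q^j$ appearing in the classical Weil-representation dimension formulas, are nonzero; hence every simple object of $\overline{\mathfrak{Rep}}(Sp_{2t}(\F_q))$ has nonzero dimension, there are no negligible morphisms between simples, and a standard argument (simple objects with nonzero dimension generate a semisimple subcategory closed under the operations) upgrades this to semisimplicity of the whole category. Semisimplicity plus rigidity plus $\mathrm{End}(\mathbf1)=\C$ gives pre-Tannakian.

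For the general $t$ case, I would apply the semisimplification functor directly: by \cite{EGNOBook}, for any $\C$-linear Karoubian symmetric monoidal category with finite-dimensional $\mathrm{Hom}$-spaces in which $\mathrm{End}$ of every object is a finite-dimensional algebra, the quotient by the tensor ideal of negligible morphisms is semisimple, and it is abelian and rigid (hence pre-Tannakian) provided the original category is rigid with $\mathrm{End}(\mathbf1)=\C$ — conditions we have already established. One subtlety to address is that a priori $\mathrm{End}(\boldsymbol\omega_a^{\otimes m}\otimes\boldsymbol\omega_b^{\otimes n})$ must be finite-dimensional and that negligible morphisms form a genuine tensor ideal; both are inherited from the corresponding finite-level statements in $Rep(Sp_{2N}(\F_q))$ by stabilization. \emph{The main obstacle} I anticipate is not the semisimplification step itself, which is formal once the category is set up, but verifying that the resulting category is \emph{pre-Tannakian} in the strong sense used later — in particular checking the finiteness (objects of finite length, finite-dimensional $\mathrm{Hom}$'s) after semisimplification and confirming that the tensor unit remains simple. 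This requires tracking that the stabilized endomorphism algebras are semisimple (Artinian) and that no collapsing of the unit object occurs, which ultimately rests on the concrete description of $\overline{\mathcal{T}}[Sp_{2t}(\F_q)]$ from \cite{OscillatorRepsFull} and the explicit dimension formulas; I would cite those results rather than reprove them.
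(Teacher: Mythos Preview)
Your treatment of the generic-$t$ statement is fine and matches the paper: one checks that the interpolated dimension formulas, which are Laurent polynomials in $q^t$ with roots only at $q^t=\pm q^n$, do not vanish, so no indecomposable is negligible and the category is already semisimple.

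The gap is in the first claim. You invoke a general principle from \cite{EGNOBook} to the effect that for a $\C$-linear Karoubian rigid symmetric monoidal category with finite-dimensional $\mathrm{Hom}$-spaces, the quotient by negligibles is automatically semisimple. No such theorem exists. The Andr\'e--Kahn/EGNO machinery guarantees only that the trace pairing becomes non-degenerate on the quotient; it does not rule out an indecomposable $X$ whose endomorphism algebra has a non-negligible nilpotent (for instance $\mathrm{End}(X)\cong\C[\epsilon]/(\epsilon^2)$ with $\mathrm{tr}(\epsilon)\neq 0$), in which case the quotient is not semisimple. You yourself flag that one must ``track that the stabilized endomorphism algebras are semisimple,'' but you give no mechanism for doing so beyond deferring to \cite{OscillatorRepsFull}.

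The paper supplies exactly the missing mechanism, and it is a genuinely different idea from anything in your outline. First it proves a criterion (Lemma~\ref{SemiSimpLemma}): the semisimplification is semisimple if and only if every endomorphism $f$ with $\mathrm{tr}(f)\neq 0$ has $\mathrm{tr}(f^{\circ m})\neq 0$ for arbitrarily large $m$. This criterion is visibly inherited by sub-$T$-algebras. Second, it observes that $\mathrm{Res}_{GL_N(\F_q)}(\omega_a)\cong \C\F_q^N\otimes\epsilon(\det)$, which yields an inclusion of $T$-algebras $\overline{\mathcal{T}}[Sp_{2t}(\F_q)]\hookrightarrow\mathcal{T}[GL_t(\F_q)]$. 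Since $\mathfrak{Rep}(GL_t(\F_q))$ is already known (e.g.\ via oligomorphic methods) to have semisimple semisimplification, the criterion transfers the result. Without this embedding-and-transfer step, your argument for arbitrary $t$ does not close.
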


\begin{proof}

First, there is an inclusion of T-algebras
$$\overline{\mathcal{T}} [\text{Sp}_{2t} (\F_q)] \hookrightarrow \mathcal{T} [GL_t (\F_q)],$$
since 
for every $N$,
the restriction of an oscilaltor representation $\omega_a$
to $GL_N (\F_q) \subseteq \text{Sp}_{2N} (\F_q)$ is isomorphic to
$$Res_{GL_N (\F_q)} (\omega_a) \cong  (\C V_N) \otimes \epsilon (\text{det}).$$
In particular, then for finite sets $S,T$, restriction gives an inclusion from each $Hom$-space
$$\begin{array}{c}
Hom_{\overline{\mathfrak{Rep}} (\text{Sp}_{2t} (\F_q))}
(\boldsymbol \omega_a^{\otimes S_1} \otimes \boldsymbol \omega_b^{\otimes S_2},
\boldsymbol \omega_a^{\otimes T_1} \otimes \boldsymbol \omega_b^{\otimes T_2})=\\[1ex]
Hom_{\text{Sp}_{2N} (\F_q)}
( \omega_a^{\otimes S_1} \otimes \omega_b^{\otimes S_2},
 \omega_a^{\otimes T_1} \otimes  \omega_b^{\otimes S_2})
\end{array}
$$
making up $\overline{\mathcal{T}} [\text{Sp}_{2t} (\F_q)]_{S,T}$ for $S_1 \amalg S_2 = S, T_1 \amalg T_2 = T$,
into the $GL_N (\F_q)$-equivariant $Hom$-space on the restrictions
$$Hom_{GL_N (\F_q)} ( (\C V_N \otimes \epsilon (\text{det}))^{S_1 \amalg S_2}, (\C V_N \otimes
\epsilon (\text{det}))^{T_1 \amalg T_2}),$$
which is isomorphic to $\mathcal{T} [GL_t (\F_q)]_{S,T}$.
Partial trace, tensor product, and functoriality (and therefore composition) are all
compatible.

\vspace{3mm}

We then apply

\begin{lemma}\label{SemiSimpLemma}
The semisimplification of a $\C$-linear additive category with strong duality
and an associative, commutative, unital tensor product generated by a basic object $X$
is semisimple if and only if for every endomorphism $f \in End (X^{\otimes n})$,
if the trace of $f$ is non-zero, then for every $n$, there exists an $m>n$ such that
$$tr (f^{\circ m}) \neq 0.$$
\end{lemma}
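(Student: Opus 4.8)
The statement is an iff characterizing when semisimplification of a $\C$-linear tensor category (with strong duality, generated by a single object $X$) is itself semisimple, in terms of a ``trace-revival'' condition on endomorphisms. The plan is to recall the structure of the semisimplification $\mathscr{S}(\mathscr{C})$: its objects are the same as in the pseudo-abelian envelope of $\mathscr{C}$, and a morphism is negligible precisely when it pairs to zero against every morphism in the opposite direction under the trace form. Since everything is tensor-generated by $X$, it suffices to test semisimplicity on the endomorphism algebras $\overline{End}(X^{\otimes n}) = End(X^{\otimes n})/\mathcal{N}_n$, where $\mathcal{N}_n$ is the ideal of negligible endomorphisms; $\mathscr{S}(\mathscr{C})$ is semisimple iff each of these finite-dimensional algebras is semisimple.

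First I would set up the forward direction. Suppose $\mathscr{S}(\mathscr{C})$ is semisimple and let $f \in End(X^{\otimes n})$ have $tr(f)\neq 0$; I want, for every $n'$ (the statement writes ``for every $n$'' but means: for arbitrarily large $m$), an $m > n'$ with $tr(f^{\circ m})\neq 0$. The key point is that nonzero trace means the image of $f$ in the semisimple quotient $\overline{End}(X^{\otimes n})$ is a nonnegligible morphism; equivalently $f$ does not lie in the kernel of the trace pairing, so $f$ is not a negligible endomorphism. In a semisimple algebra, an element whose ``trace'' (here the categorical trace, which is a faithful symmetric form on $\overline{End}(X^{\otimes n})$ once we're in the semisimple quotient — this faithfulness is exactly what semisimplicity of $\mathscr{S}(\mathscr{C})$ buys us) is nonzero cannot be nilpotent: if $\bar f^{\circ m}=0$ for all large $m$ then $\bar f$ is nilpotent, hence $tr(\bar f^{\circ k})=0$ for all $k\geq 1$, contradiction once $k=1$. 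So infinitely many powers have nonzero trace. The only subtlety is bookkeeping between traces of powers of $f$ in $End(X^{\otimes n})$ versus in $End(X^{\otimes nm})$ (the $m$-fold tensor power lives in a bigger endomorphism algebra); here $f^{\circ m}$ means the $m$-fold composite $f\circ\cdots\circ f$ inside $End(X^{\otimes n})$, so no issue — $tr(f^{\circ m})$ is computed in the same algebra.

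For the converse — the one I expect to be the main obstacle — assume the trace-revival condition and show each $\overline{End}(X^{\otimes n})$ is semisimple, equivalently has trivial Jacobson radical, equivalently contains no nonzero nilpotent two-sided ideal, equivalently the trace form $(f,g)\mapsto tr(f\circ g)$ is nondegenerate on it. The contrapositive is the natural route: if the trace form is degenerate, pick $0\neq f\in End(X^{\otimes n})$ in the radical of the form, i.e.\ $tr(f\circ g)=0$ for all $g$; in particular $g$ ranging over powers of $f$ (and using that $f$ being in the radical of the symmetric form on the finite-dimensional algebra forces $f$ to be nilpotent, or at least forces $f^{\circ m}$ into that radical for all $m$) gives $tr(f^{\circ m})=0$ for all $m\geq 1$. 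If $tr(f)\neq 0$ this already contradicts the hypothesis with $m=1$; if $tr(f)=0$, I'd instead want to produce \emph{some} endomorphism (possibly of a tensor power, or $f$ composed with a suitable $g$) that has nonzero trace but all of whose high powers have zero trace, contradicting the hypothesis. The technical heart is translating ``negligible ideal in $End(X^{\otimes n})$'' into the existence of such a witness: the standard argument is that if $\mathcal{N}_n \neq \mathrm{rad}$-of-trace-form were to fail to be closed under the needed operations, or if some nonnegligible $f$ had all powers negligible, one gets a contradiction. I would cite the general semisimplification machinery of \cite{EGNOBook} for the fact that $\mathscr{S}(\mathscr{C})$ is semisimple iff the trace form on each $\overline{End}$ is nondegenerate, and reduce the lemma to the purely algebraic statement: a finite-dimensional algebra $A$ with a symmetric invariant bilinear form $tr$ is such that $\mathrm{rad}(A) = \{a : tr(aA)=0\}$, so $A$ is semisimple iff no nonzero $a$ kills the form, and then the revival condition is exactly what prevents a nonzero-trace element from generating a nilpotent-looking ideal. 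The delicate step is the case $tr(f)=0$ but $f$ nonnegligible: here one uses that nonnegligibility of $f$ means $tr(f\circ h)\neq 0$ for some $h$, replaces $f$ by an appropriate modification landing in a matrix block, and applies the hypothesis to a block-diagonal endomorphism whose trace is controlled. I would make this precise by first reducing to the case where $\mathscr{C}$'s endomorphism algebras are already semisimple on the $X^{\otimes n}$ for small $n$ and inducting, or alternatively by invoking that the negligible morphisms form a tensor ideal so it suffices to check nondegeneracy ``stably.''
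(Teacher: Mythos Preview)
Your framework is right --- reduce to showing each quotient $End(X^{\otimes n})/\mathcal{N}_n$ is semisimple --- but both directions have a genuine gap, and in each case the paper's proof supplies the missing idea in one line.

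For sufficiency (trace-revival $\Rightarrow$ semisimple), you correctly identify the obstacle: a nonnegligible $f$ might have $tr(f)=0$, so you cannot apply the hypothesis to $f$ directly. The fix is simpler than the workarounds you sketch: since $f$ is nonnegligible there is some $g$ with $tr(f\circ g)\neq 0$, and you apply the hypothesis to $f\circ g$ rather than to $f$. This gives $tr((f\circ g)^{\circ m})\neq 0$ for arbitrarily large $m$, so $\overline{f\circ g}$ is not nilpotent in the quotient; hence $\bar f$ cannot lie in the Jacobson radical (which is a nilpotent ideal in a finite-dimensional algebra). No induction, no block reduction, no ``stable'' argument is needed.

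For necessity (semisimple $\Rightarrow$ trace-revival), your argument slips: you write ``if $\bar f^{\circ m}=0$ for all large $m$ then $\bar f$ is nilpotent,'' but the negation of the conclusion only gives $tr(f^{\circ m})=0$ for all large $m$, not $f^{\circ m}=0$. The passage from vanishing traces of high powers back to $tr(f)=0$ is not automatic. The paper's argument is: in the semisimple quotient $\prod M_{d_i}(\C)$ the categorical trace is a linear combination of the matrix traces, so $tr(f^{\circ m})=\sum_k a_k \mu_k^m$ for the distinct eigenvalues $\mu_k$; if this vanishes for a block of consecutive large $m$, a Vandermonde determinant forces all $a_k$ with $\mu_k\neq 0$ to vanish, whence $tr(f^{\circ m})=0$ for \emph{all} $m\geq 1$, contradicting $tr(f)\neq 0$.
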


\begin{proof}[Proof of Lemma \ref{SemiSimpLemma}]

To prove sufficiency, consider an endomorphism $f$ of some tensor power $X^{\otimes n}$
is non-negligible, i.e. there exists some morphism $g\in End (X^{\otimes n})$ such that
the trace of $f \circ g$ is non-zero. The trace condition then gives that for every
$n$, there exists a $m> n$ such that
$$tr ((f \circ g)^{\circ m}) \neq 0,$$
and hence, $f$ is not an element of the Jacobian ideal of the endomorphism algebra $End (X^{\otimes n})$,
and in particular, is not nilpotent. Therefore, the semisimplification of the category is semisimple. 

Necessity follows from the general result that in a semisimple $\C$-algebra (e.g.
the endomorphism algebra of $X^{\otimes n}$ in the semisimplification),
if some general trace operation (i.e. a linear combination of trace on each factor, consider the endomorphism
algebra as a product of matrix algebras) is non-zero on an element $f$, then for every $n$, there exists an
$m>n$ such that the trace operation is non-zero on $f^{\circ m}$. (This follows, for example,
by considering the Vanermonde determinant.)
\end{proof}

In particular, if the semisimplification
of a category defined by a T-algebra $\mathcal{T}$ is semisimple,
then the semisimplifications of any category defined by a sub-T-algebra $\mathcal{T}' \subseteq \mathcal{T}$ is semisimple as well.
Therefore, since the semisimplification of $\mathfrak{Rep} (GL_t (\F_q))$
is semisimple, so is the semisimplification of $\overline{\mathfrak{Rep}} (\text{Sp}_{2t} (\F_q))$.

The second claim follows, for example, by examining the polynomial order of $\text{Sp}_{2N} (\F_q)$,
and replacing $N$ by $t$, to conclude that every indecomposable object is non-vanishing in the semisimplification.
\end{proof}

For our purposes here, we will want to consider the case of $t = N$, making
the first part of this statement more relevant. 
The second part of this statement for generic values of $t$
can be used to conclude an interpolated version of our decomposition
statement, for example, describing an interpolated eta correspondence
$$\eta^t_{W,B}: \widehat{O(W,B)} \hookrightarrow \overline{\mathfrak{Rep}} (\text{Sp}_{2t} (\F_q)),$$
sending the irreducible representations of $O(W,B)$ to simple objects of
$\overline{\mathfrak{Rep}} (\text{Sp}_{2t} (\F_q))$.

\subsection{Partial pseudo-abelian envelopes and ``isotypical" subcategories}
\label{InterpolatedIsotyp}
In this subsection, we give an argument using interpolation for
our full statement of Howe duality. From here on, we will focus on the case
of interpolating the eta correspondence with target in odd orthogonal group representations,
for simplicity. In this part of the argument, every other case is entirely similar.
Fix a choice of orthogonal space and bilinear form $(W,B)$.

Recall that in this case, for a choice of $V$ lying with $(W,B)$ in the 
symplectic stable range,
the eta correspondence is an injective map
$\eta^V_{W,B}$ sending the irreducible representations of $O(W,B)$
to irreduicble representations of $\text{Sp}(V)$.
The methods of \cite{TotalHoweI, TotalHoweII} ultimately only rely on information carried by the
endomorphism algebras of (tensor powers of) oscillator representations, whose structure is stable
under enlarging the dimension of $V$,
in the stable symplectic range, and therefore, by definition, they pass to the interpolated categories.

Write $dim (W) = n$, and suppose $B$ is, as a symmetric bilinear form, equivalent to
a diagonal matrix with entries $a_1, \dots , a_n \in \F_q^\times $. We may then consider
the object
$$\boldsymbol \omega^{\otimes B} = \boldsymbol \omega_{a_1}
\otimes \dots \otimes \boldsymbol \omega_{a_n}$$
in $\overline{\mathfrak{Rep}} (\text{Sp}_{2t} (\F_q))$.
Let us consider the ``partial pseudo-abelian envelope"
$\mathscr{C}_B^{int} (t)$ defined as the subcategory
of $\overline{\mathfrak{Rep}} (\text{Sp}_{2t} (\F_q))$ consisting of images of
idempotents of $End_{\mathfrak{Rep} (\text{Sp}_{2t} (\F_q))} (\boldsymbol \omega^{\otimes B})$.
We do not consider a tensor product on $\mathscr{C}_B^{int} (t)$, only working
with its structure as an additive $\C$-linear category.

Further, considering
$$\begin{array}{c}
End_{\overline{\mathfrak{Rep}} (\text{Sp}_{2t} (\F_q))} (\boldsymbol\omega^{\otimes B})
\cong 
End_{\text{Sp}_{2N} (\F_q)} (\omega_{a_1}[V_N] \otimes \dots \otimes \omega_{a_n} [V_N])  = \\[1ex]
End_{\text{Sp}_{2N} (\F_q)} ( \omega [ V_N \otimes W]),
\end{array}$$
for a large enough rank $N$, where on the right hand side, we consider
the restriction of $\omega [V_N \otimes W]$ along
the inclusion
$$\text{Sp}_{2N} (\F_q) = \text{Sp} (V_N) \subseteq \text{Sp}(V_N) \times O(W,B) \subseteq \text{Sp} (V_N \otimes W).$$
Therefore, there is a built in action of $O(W,B)$ on the endomorphism algebras
$End_{\overline{\mathfrak{Rep}} (\text{Sp}_{2t} (\F_q))} (\boldsymbol\omega^{\otimes B})$.

In particular, for a fixed irreducible representation $\rho$ of $O(W,B)$, we may consider
the full {\em $\rho$-isotypical} subcategory
$\mathscr{C}_\rho^{int} (t)\subseteq \mathscr{C}_B (t)$, with objects consisting of
images of idempotents $\iota$ in 
$End_{\overline{\mathfrak{Rep}} (\text{Sp}_{2t} (\F_q))} (\boldsymbol\omega^{\otimes B})$
where, considering the $O(W,B)$-fixed point algebra
$$\begin{array}{c}
End_{O(W,B) \times \overline{\mathfrak{Rep}} (\text{Sp}_{2t} (\F_q))} (\boldsymbol\omega^{\otimes B})=\\[1ex]
(End_{\overline{\mathfrak{Rep}} (\text{Sp}_{2t} (\F_q))} (\boldsymbol\omega^{\otimes B}))^{O(W,B)}
\end{array},$$
there is a simple idempotent of the form $\kappa \otimes \iota$, for
$\kappa$ an idempotent in $Rep(O(W,B))$ with image isomorphic to $\rho$.
We may also describe these objects as the images of $\iota$ lying in the $\rho$-isotypical part
of $End_{\overline{\mathfrak{Rep}} (\text{Sp}_{2t} (\F_q))} (\boldsymbol\omega^{\otimes B})$
as an $O(W,B)$-representation.
We cannot take a semisimplification of $\mathscr{C}_\rho^{int} (t)$, since we have given
up its tensor structure.
However, we may consider the images of $\mathscr{C}_B^{int} (t)$, $\mathscr{C}_{\rho}^{int} (t)$ under
the quotient semisimplification functor
$$\overline{\mathfrak{Rep}} (\text{Sp}_{2t} (\F_q)) \rightarrow \mathscr{S} (\overline{\mathfrak{Rep}} (\text{Sp}_{2t} (\F_q))).$$
Writing $\widetilde{\mathscr{C}}_B^{int} (t)$, $\widetilde{\mathscr{C}}^{int}_\rho (t)$ for these images,
they form semisimple abelian subcategories of
$\mathscr{S} (\overline{\mathfrak{Rep}} (\text{Sp}_{2t} (\F_q)))$.
Note that this is only non-trivial for a choice of $t=N$ a natural number.

\vspace{5mm}

On the other hand, we may also consider the full subcategories
$\mathscr{C}_\rho^{gen} (N)$ of $Rep (\text{Sp}_{2N} (\F_q))$
consisting of direct sums of all genuine irreducible representations $\pi\in Rep (\text{Sp}_{2N} (\F_q))$
such that
$$\pi \otimes \rho \subseteq Res_{O(W,B) \times \text{Sp}(V)} ( \omega [ V\otimes W]).$$

\subsection{Interpolating correspondences}\label{IntEtaSubsect}

The purpose of this subsection is to describe how the objects of the subcategories
$\mathscr{C}_B^{int} (t)$ of
the interpolation
$\overline{\mathfrak{Rep}} (\text{Sp}_{2t} (\F_q))$ 
constructed in tensor powers of oscillator representations of odd degree
such that the total product of central characters has a certain quadratic character $\alpha$
(i.e. corresponding to odd orthogonal spaces and symmetric bilinear
forms of a certain prescribed discriminant $\alpha$), can be
written down according to an ``formally interpolated Lusztig parametrization."
We discuss this in detail for the case of $dim (W)$ odd. All other cases are similar.

More specifically, an object of $\mathscr{C}_B^{int} (t)$
is of the form
$\eta^{t}_{W,B} (\rho)$,
for $W$ of dimension $2m+1$ for $m \in \N$, with a form $B$ of discriminant $disc(B) = \alpha$,
and an irreducible representation $\rho$ of $O(W,B)$.
Say that as a representation of $O(W,B) $,
$\rho$ is of the form $r^{\text{O} (W, B)}[(s),u]^{\pm 1}$
corresponding to classification data
with semisimple part $(s) \in \text{Sp}_{2m} (\F_q) = (SO_{2m+1} (\F_q))^*$, unipotent part
$u \in \widehat{(Z_{\text{Sp}_{2m} (\F_q)} (s)^\circ)^*}_u$, and central sign data $\pm 1$.
More concretely, further say that $s$ has $-1$ as an eigenvalue of multiplicity $2\ell$, writing
$Z_{\text{Sp}_{2m} (\F_q)} (s)^\circ = H \times \text{Sp}_{2\ell} (\F_q)$, and
$$u = u_{H^*} \otimes {\lambda_1< \dots < \lambda_a \choose \mu_1< \dots  < \mu_b}$$
for $u_{H^*} \in \widehat{H^*}_u$ and
${\lambda_a< \dots < \lambda_a \choose \mu_1< \dots < \mu_b}$ (switching rows so that $a-b$
is $1$ mod $4$) specifying a unipotent 
representation of $SO_{2\ell+1} (\F_q) = (\text{Sp}_{2\ell} (\F_q))^*$.
Then, for the sign $+$, we say that $\eta^{t}_{W,B} (r^{\text{O} (W, B)}[(s),u]^{+ 1})$
corresponds to ``interpolated classification data"
\beg{LusztigClassDataOfRho+InterpolateTot}{[\phi^+ (s), \widetilde{u_{H^*}} \otimes {\lambda_1 < \dots <\lambda_a
\choose \mu_1< \dots < \mu_b  \hspace{5mm} t'_\rho }],}
writing $t'_\rho = t-m + \frac{a+b-1}{2}$.
This is exactly the classification data of a stable range eta correspondence $\eta^{t}_{W,B} (\rho)$
for $dim (V) = 2N$, with $N$ replaced by $t$ (we omit the final $<$ symbol in the second row of the
symbol notation, since at an interpolated value of $t$, writing $\mu_b < t'_\rho $
may be false or incomparable). Again, $\text{Sp}_{2t} (\F_q)$ is not a genuine group, and writing
$\phi^+ (s)$ indicates an element with finitely many eigenvalues not equal to $-1$
(which would contribute genuine factors in ``the 
identity component of its centralizer") and has $-1$
as an eigenvalue of ``multiplicity $2(t-m+\ell)$."
Interpolating the stable formula one would obtain for $\text{Sp}_{2N} (\F_q)$, replacing $N$ by $t$,
its dimension is
\beg{InterpolatedEta+Dim}{\begin{array}{c}
\displaystyle dim (\eta^{t}_{W,B} (r^{\text{O} (W, B)}[(s),u]^{+ 1})) = 
\\
\displaystyle 
\frac{\displaystyle dim (\rho) \cdot \prod_{i=t'+1}^t (q^{2i} -1) \cdot
\prod_{i=1}^a (q^{t'_\rho} + q^{\lambda_i}) \cdot
\prod_{i=1}^b (q^{t'_\rho} - q^{\mu_i})}{2 \cdot q^{(a+b-1)(a+b+1)/4} \cdot |SO_{2m+1} (\F_q)|_{q'}}.
\end{array}
}

Similarly, at the sign $-1$, we say that $\eta^{t}_{W,B} (r^{\text{O} (W, B)}[(s),u]^{- 1})$
corresponds to ``interpolated classification data"
\beg{LusztigClassDataOfRho-InterpolateTot}{[\phi^- (s), \widetilde{u_{H^*}} \otimes {\lambda_1 < \dots <\lambda_a \hspace{5mm} t'_\rho
\choose \mu_1< \dots < \mu_b  }],}
writing $t'_\rho = t-m + (a+b-1)/2$.
Interpolating the stable formula one would obtain for $\text{Sp}_{2N} (\F_q)$, replacing $N$ by $t$,
its dimension is
\beg{InterpolatedEta-Dim}{\begin{array}{c}
\displaystyle dim (\eta^{t}_{W,B} (r^{\text{O} (W, B)}[(s),u]^{ -1})) = 
\\
\displaystyle 
\frac{\displaystyle dim (\rho) \cdot \prod_{i=t'+1}^t (q^{2i} -1) \cdot
\prod_{i=1}^a (q^{t'_\rho} - q^{\lambda_i}) \cdot
\prod_{i=1}^b (q^{t'_\rho}+ q^{\mu_i})}{2 \cdot q^{(a+b-1)(a+b+1)/4} \cdot |SO_{2m+1} (\F_q)|_{q'}}.
\end{array}
}

Now we may consider the ``restriction" functor
$$\mathbf{Res}: \mathscr{S}(\overline{\mathfrak{Rep}} (\text{Sp}_{2nt} (\F_q))) \rightarrow \mathscr{S}(\overline{\mathfrak{Rep}} (\text{Sp}_{2t} (\F_q)))
\boxtimes Rep (O(W,B)).$$
The interpolated Howe duality statement then is

\begin{theorem}\label{InterpolatedSemiSimpEtaThm}
In the semisimplification $\mathscr{S} (\overline{\mathfrak{Rep}} (\text{Sp}_{2t} (\F_q)))$,
the original decomposition of $\mathbf{Res} ( \boldsymbol \omega_1)$ as
\beg{InterpolatedEtaCorrStatement}{
\bigoplus_{k=0}^{m_W} \bigoplus_{\rho \in \widehat{O(W[-k],B[-k])}} 
  \eta^{t}_{W,B} (\rho)  \boxtimes  \text{Ind}_{P_k} (\rho \otimes \epsilon (\text{det}))
}
holds, as objects of $\mathscr{S}(\overline{\mathfrak{Rep}} (\text{Sp}_{2t} (\F_q))) \boxtimes Rep (O(W,B))$.
\end{theorem}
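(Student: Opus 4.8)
The plan is to interpolate the symplectic stable range decomposition \rref{TotalHoweIStSymp} of \cite{TotalHoweI}. Fix $N$ large enough that $(V_N,(W,B))$ lies in the symplectic stable range, so that $Res_{Sp(V_N)\times O(W,B)}(\omega[V_N\otimes W])=\bigoplus_{k=0}^{h_W}\bigoplus_{\rho\in\widehat{O(W[-k],B[-k])}}\eta^{V_N}_{W,B}(\rho)\otimes\text{Ind}^{P_k^B}(\rho\otimes\epsilon(\text{det}))$. This is equivalent to a decomposition of the algebra $E_N:=End_{Sp(V_N)}(\omega[V_N\otimes W])$, with its natural $O(W,B)$-action, into $O(W,B)$-isotypical blocks indexed by the pairs $(k,\rho)$, each block having multiplicity space $\text{Ind}^{P_k^B}(\rho\otimes\epsilon(\text{det}))$ and ``matrix part'' recording $\eta^{V_N}_{W,B}(\rho)$. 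The key point, inherited from the arguments of \cite{TotalHoweI, TotalHoweII}, is that the orthogonal idempotents realizing this block decomposition, their $O(W,B)$-equivariance, and the partial-trace data needed to evaluate dimensions are all stable in $N$ for $N\gg 0$ --- that is, they are statements about the $T$-algebra $\overline{\mathcal{T}}[Sp_{2t}(\F_q)]$, not about any fixed genuine group. Since $End_{\overline{\mathfrak{Rep}}(Sp_{2t}(\F_q))}(\boldsymbol\omega^{\otimes B})\cong E_N$ as algebras with $O(W,B)$-action for $N\gg 0$ by construction, the idempotents lift to $\mathscr{C}_B^{int}(t)$ and their images give a direct sum decomposition of $\boldsymbol\omega^{\otimes B}$ there.

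For generic $t$ (so $q^t\neq\pm q^n$ for all $n\in\N_0$) the category $\overline{\mathfrak{Rep}}(Sp_{2t}(\F_q))$ is semisimple by the Proposition of Subsection \ref{InterpolatedBasicCats}, hence each block is a simple object, which I would name $\eta^t_{W,B}(\rho)$; this gives the decomposition $\boldsymbol\omega^{\otimes B}=\bigoplus_{k,\rho}\eta^t_{W,B}(\rho)\boxtimes\text{Ind}^{P_k^B}(\rho\otimes\epsilon(\text{det}))$, which is \rref{InterpolatedEtaCorrStatement} in the genuinely semisimple range. I would then identify $\eta^t_{W,B}(\rho)$ with the interpolated Lusztig classification data \rref{LusztigClassDataOfRho+InterpolateTot}, \rref{LusztigClassDataOfRho-InterpolateTot} by computing its dimension: this dimension is a fixed Laurent polynomial in $q^t$, obtained from Lusztig's symbol dimension formula for the genuine stable-range $\eta^{V_N}_{W,B}(\rho)$ of \cite{TotalHoweII} by substituting $N\mapsto t$, which is legitimate precisely because it is computed from partial traces in $\overline{\mathcal{T}}[Sp_{2t}(\F_q)]$; this yields the formulas \rref{InterpolatedEta+Dim}, \rref{InterpolatedEta-Dim}. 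Finally, to reach $t=N\in\N$, I would apply the semisimplification functor $\overline{\mathfrak{Rep}}(Sp_{2t}(\F_q))\to\mathscr{S}(\overline{\mathfrak{Rep}}(Sp_{2t}(\F_q)))$, which is additive and symmetric monoidal and so carries the decomposition above to a decomposition of $\mathbf{Res}(\boldsymbol\omega_1)$ in $\mathscr{S}(\overline{\mathfrak{Rep}}(Sp_{2t}(\F_q)))\boxtimes Rep(O(W,B))$ with summands $\eta^t_{W,B}(\rho)\boxtimes\text{Ind}^{P_k^B}(\rho\otimes\epsilon(\text{det}))$, where $\eta^t_{W,B}(\rho)$ now denotes the image of the generic object and may be the zero object for some $\rho$.

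I expect the main obstacle to be the identification step: one must check that the idempotent blocks produced by the stable decomposition are genuinely indecomposable in $\overline{\mathfrak{Rep}}(Sp_{2t}(\F_q))$ at generic $t$ (so that $\eta^t_{W,B}(\rho)$ is simple and the multiplicity space is exactly $\text{Ind}^{P_k^B}(\rho\otimes\epsilon(\text{det}))$ rather than something coarser or finer), and that the interpolated dimension matches on the nose the naive substitution $N\mapsto t$ into the symbol dimension formula; both amount to a careful bookkeeping of the stable structure of $\overline{\mathcal{T}}[Sp_{2t}(\F_q)]$ against the explicit combinatorics of \cite{TotalHoweII}. The reconciliation of this interpolated decomposition at $t=N$ with the genuine $Sp_{2N}(\F_q)$-representation decomposition in the metastable range --- the ``cancellation of terms'' and the resolution of the vanishing $\eta^t_{W,B}(\rho)$ into the correction terms $\mathcal{A}_k(\rho,N'_\rho)$ --- is what is carried out in Subsection \ref{IntEtaProofSubsect}.
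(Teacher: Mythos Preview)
Your proposal is correct and follows essentially the same approach as the paper. The paper does not give a detailed standalone proof of this theorem; it observes, just before the statement, that ``the methods of \cite{TotalHoweI, TotalHoweII} ultimately only rely on information carried by the endomorphism algebras of (tensor powers of) oscillator representations, whose structure is stable under enlarging the dimension of $V$, in the stable symplectic range, and therefore, by definition, they pass to the interpolated categories,'' and then states the theorem as a direct consequence. Your sketch unpacks exactly this: the stable-range idempotents and their $O(W,B)$-equivariance live in the $T$-algebra $\overline{\mathcal{T}}[Sp_{2t}(\F_q)]$, hence transfer verbatim to $\overline{\mathfrak{Rep}}(Sp_{2t}(\F_q))$, and the dimension identification with interpolated Lusztig data is obtained by substituting $N\mapsto t$ into the stable formulas of \cite{TotalHoweII}, which the paper records as \rref{InterpolatedEta+Dim}, \rref{InterpolatedEta-Dim}.
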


In summary, the objects of $\mathscr{C}_B^{int} (t)$ are precisely direct sums of
all
\beg{Etatpi}{\eta^t_{W[-k], B[-k]} (\rho)}
for irreducible representations $\rho \in \widehat{O(W[-k], B[-k])}$.
For a fixed irreducible representation $\rho$ of $O(W,B)$, the objects of
$\mathscr{C}_\rho^{int} (t)$ consist of direct sums of objects
$\eta^t_{W[-k], B[-k]} (\rho')$ corresponding to irreducible representations $\rho' \in \widehat{O(W[-k], B[-k])}$ such that $\rho$
is a summand of the parabolic induction
$$\rho \subseteq \text{Ind}_{P_k} (\rho' \otimes \epsilon (\text{det}))$$
writing $P_k\subseteq O(W,B)$ for the maximal parabolic subgroup
with Levi factor $O(W[-k],B[-k]) \times GL_k (\F_q)$, considering $\epsilon (\text{det})$
as a representation of the factor $GL_k (\F_q)$.

At $t=N$ corresponding to a reductive dual pair
$(\text{Sp}_{2N} (\F_q), O(W,B))$ in 
the symplectic metastable range, the semisimplification images $\widetilde{\mathscr{C}}_B (N)$,
$\widetilde{\mathscr{C}}_\rho (N)$ are semisimple categories
with objects consisting of direct sums of simple objects corresponding to
all formal interpolated classification, eliminating $0$-dimensional objects,
which occur precisely when $N_\rho' = \lambda_i$ or $\mu_i$ for some $i$ in
\rref{LusztigClassDataOfRho+InterpolateTot} or \rref{LusztigClassDataOfRho-InterpolateTot}.
Note that the remaining formal interpolated classification objects,
say where $\lambda_i< N'_\rho < \lambda_{i+1}$ in \rref{LusztigClassDataOfRho+InterpolateTot}
or $\mu_i< N'_\rho < \mu_{i+1}$ in \rref{LusztigClassDataOfRho-InterpolateTot}, have dimension
equal to a genuine irreducible $\text{Sp}_{2N} (\F_q)$-representation where $N'_\rho $ is inserted
in the appropriate place, multiplied by $(-1)^{a-i}$ or $(-1)^{b-i}$.
Call this the {\em true permutation} of the formal interpolated Lusztig data
as $t=N$.

\subsection{The proof of the metastable eta correspondence}\label{IntEtaProofSubsect}

Now, to approach a choice of $(V,(W,B))$ in the symplectic metastable range,
we attempt to apply Theorem \ref{InterpolatedSemiSimpEtaThm} to $t= N$, giving a decomposition in
the semisimplification $\mathscr{S} (\overline{\mathfrak{Rep}} (\text{Sp}_{2t} (\F_q)))$ in terms
of objects of
$\widetilde{\mathscr{C}}_B (N)$.
We must relate this category to genuine $\text{Sp}_{2N} (\F_q)$-representations.
In fact, we claim that replacing the formal classification data by its true permutation,
with the corresponding sign, gives a genuine decomposition of the restricted oscillator representation
in the Grothendieck group $K( Rep (\text{Sp}_{2N} (\F_q)))$.
Simplifying will precisely give the claimed decomposition in Theorem \ref{ExplicitTheoremSymp}.

\vspace{5mm}

To see this, for each $\rho \in \widehat{O(W,B)}$, consider functors
$$\Phi: \mathscr{C}^{gen}_\rho (N) \rightarrow \widetilde{\mathscr{C}}_\rho^{int} (N)$$
defined as follows: Consider a simple object $\pi$ of the source, such that
$$\pi \otimes \rho \subseteq Res_{\text{Sp} (V) \times O(W,B)} (\omega [ V\otimes W]).$$
We may consider an idempotent $\iota_\pi$ in the $\text{Sp}(V)$-equviariant
endomorphism algebra of $Res_{\text{Sp} (V)} (\omega [ V\otimes W]) \cong \omega^{\otimes B}$
whose image is one of these copies of $\pi$. By duality, since each oscillator
representation has $\omega_a \otimes (\omega_a)^\vee \cong \C V)$, we may conisder
$$\begin{array}{c}
\iota_\pi \in End_{\text{Sp}(V)} (\omega^{\otimes B}) \cong Hom_{\text{Sp}(V)} (1, (\C V)^{\otimes B})=\\[1ex]
(\C (V\otimes W))^{\text{Sp}(V)}
\end{array}$$
as a linear combination of $\text{Sp}(V)$-orbits on $V \otimes W = V^{\oplus n}$ (recall that
an $\text{Sp}(V)$-orbit consits of a set of $n$-tuples of vectors $(v_1, \dots , v_n)$ satisfying
some linear (in)dependence conditions, and equations for the values of the symplectic form
on pairs of them).
Theses orbits can also be considered as orbits of any $\text{Sp}_{2M} (\F_q)$ acting on
$(\F_q^{2M})^{\oplus n}$
for any higher $M$, and therefore $\iota_\pi$ corresponds to an interpolated endomorphism
$$End_{\overline{\mathfrak{Rep}} (\text{Sp}_{2N} (\F_q))} (\boldsymbol \omega^{\otimes B})
\cong ((\C (\F_q^{2M}\otimes W))^{\text{Sp}_{2M} (\F_q)}$$
for $M>>n$ (by the definition of $\overline{\mathfrak{Rep}} (\text{Sp}_{2N} (\F_q))$).
Since
partial trace operations 
(and therefore compositions) are computed the same
in $Rep (\text{Sp}_{2N} (\F_q))$ and $\overline{\mathfrak{Rep}} (\text{Sp}_{2N} (\F_q))$
(the difference between them arising instead from certain morphisms in the interpolated category
not existing in the genuine category), this new endomorphism
is still an idempotent, with image equal to an object in $\widetilde{\mathscr{C}}_\rho^{int} (N)$ of
the same dimension as $\pi$. We put $\Phi (\pi)$ to be this object.

On the other hand, define
\beg{PsiGrothDefn}{\Psi: K (\widetilde{\mathscr{C}}_\rho^{int} (N)) \rightarrow K (\mathscr{C}_\rho^{gen} (N))}
by sending a simple object in $\widetilde{\mathscr{C}}_\rho^{int} (N)$
which must be of the form \rref{LusztigClassDataOfRho+InterpolateTot} or \rref{LusztigClassDataOfRho-InterpolateTot} at $t= N$ 
for some choice of $s,u$, ${\lambda_1< \dots < \lambda_a
\choose \mu_1< \dots < \mu_b}$ to $(-1)^{b-i}$, where $i$ is the index so that
$\mu_i < t'_\rho< \mu_{i+1}$ or 
$\lambda_i< t'_\rho < \lambda_{i+1}$, times
the genuine irreducible representation of $\text{Sp}(V)$ whose Lusztig calssification data
is the same as \rref{LusztigClassDataOfRho+InterpolateTot} or \rref{LusztigClassDataOfRho-InterpolateTot} with the formal interpolated symbol replaced by 
$${\lambda_1< \dots < \lambda_a \choose
\mu_1< \dots < \mu_i< t'_\rho < \mu_{i+1} < \dots < \mu_b}$$
or
$${\lambda_1< \dots < \lambda_i< t'_\rho< \lambda_{i+1} < \dots < \lambda_a \choose
\mu_1< \dots < \mu_b},$$
respectively. In other words, we them precisely to their signed true permutations.

\begin{proposition}
The composition of
$$\diagram
K (\mathscr{C}_\rho^{gen} (N)) \rto^{K(\Phi)} & K (\widetilde{\mathscr{C}}_\rho^{int} (N)) \rto^{\Psi} & K (\mathscr{C}_\rho^{gen} (N))
\enddiagram$$
is $Id_{K (\mathscr{C}^{gen}_\rho (N))}$.
\end{proposition}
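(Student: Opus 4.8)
The plan is to verify the identity on the distinguished $\Z$-basis of $K(\mathscr{C}^{gen}_\rho(N))$ consisting of the classes $[\pi]$ of the simple objects, i.e.\ to show $\Psi(K(\Phi)([\pi])) = [\pi]$ for each irreducible $Sp(V)$-representation $\pi$ with $\pi \otimes \rho \subseteq \mathrm{Res}_{Sp(V) \times O(W,B)}(\omega[V\otimes W])$. Unwinding the constructions of Subsections \ref{InterpolatedIsotyp}--\ref{IntEtaProofSubsect}, $\Phi(\pi)$ is the image in $\widetilde{\mathscr{C}}^{int}_\rho(N)$ of the reinterpretation $\widetilde{\iota}_\pi \in \mathrm{End}_{\overline{\mathfrak{Rep}}(Sp_{2N}(\F_q))}(\boldsymbol\omega^{\otimes B})$ of the genuine primitive idempotent $\iota_\pi$. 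Since partial traces --- hence composition and categorical trace --- are computed identically in $\mathrm{Rep}(Sp_{2N}(\F_q))$ and in $\overline{\mathfrak{Rep}}(Sp_{2N}(\F_q))$, the genuine endomorphism algebra $\mathrm{End}_{Sp(V)}(\omega^{\otimes B})$ is identified with a subalgebra of the interpolated one, $\widetilde{\iota}_\pi$ is genuinely idempotent, and $\mathrm{tr}(\widetilde{\iota}_\pi) = \mathrm{tr}(\iota_\pi) = \dim\pi$. Writing $\Phi(\pi) = \bigoplus_j m_j S_j$ for the decomposition into the simple objects of the semisimple category $\widetilde{\mathscr{C}}^{int}_\rho(N)$ (with multiplicities $m_j \geq 0$), the definition of $\Psi$ gives $\Psi(K(\Phi)([\pi])) = \sum_j m_j \varepsilon_j [\pi_{S_j}]$, where $\pi_{S_j}$ is the genuine irreducible obtained as the true permutation of the formal interpolated Lusztig data of $S_j$ and $\varepsilon_j \in \{-1,+1\}$ is the accompanying sign $(-1)^{a-i}$ or $(-1)^{b-i}$. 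So the content of the proposition is that this signed sum collapses to the single class $[\pi]$.

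The key is to compute the constituents $(S_j, m_j)$, equivalently to describe how the genuine primitive idempotent $\iota_\pi$ refines inside the larger interpolated endomorphism algebra. Using the description of the objects of $\widetilde{\mathscr{C}}^{int}_\rho(N)$ by formal interpolated Lusztig data from Subsection \ref{IntEtaSubsect}, together with the structure of $\mathrm{End}_{\overline{\mathfrak{Rep}}(Sp_{2N}(\F_q))}(\boldsymbol\omega^{\otimes B})$ as an algebra built from $Sp$-orbits, I would show that $\Phi(\pi)$ is exactly the object built from the Lusztig data of $\pi$ by the coordinate-reinsertion operations that define the alternating sums of parabolic inductions $A^{\pm}_k(\theta, N')$ of Subsection \ref{AlternatingSumSubsect}: that is, $\widetilde{\iota}_\pi$ fails to be primitive by precisely the amount recorded in those alternating sums. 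Comparing dimensions and the central characters of the oscillator tensor factors (which pin down the semisimple part of the Lusztig data) then isolates, among the $S_j$, the unique constituent equal to $\pi$ itself --- the one in which the formal coordinate $N'$ occupies the end of its row --- occurring with $m_j = 1$ and $\varepsilon_j = +1$. Feeding this into the formula above, $\Psi(K(\Phi)([\pi]))$ becomes exactly the signed combination governing the $A^{\pm}_k$, which by the resolution of those alternating sums carried out in Section \ref{AlternatingSumSection} is the single class $[\pi]$.

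I expect the principal obstacle to be the explicit comparison of the image of $\iota_\pi$ in $\mathrm{End}_{Sp(V)}(\omega^{\otimes B})$ with its decomposition in $\mathrm{End}_{\overline{\mathfrak{Rep}}(Sp_{2N}(\F_q))}(\boldsymbol\omega^{\otimes B})$ --- i.e.\ showing that the failure of $\widetilde{\iota}_\pi$ to remain primitive is controlled exactly by the reinsertion of a single Lusztig-symbol coordinate, so that the pairs $(S_j, m_j)$ really are the constituents of the alternating sums of Subsection \ref{AlternatingSumSubsect} --- together with the sign bookkeeping: one must track, through Lusztig's dimension formula for (generalized) symbols, the sign acquired when the formal coordinate $N'$ is reordered past the $\lambda_i$'s (resp.\ the $\mu_i$'s) into its increasing position, and check that it matches the $(-1)^{a-i}$, $(-1)^{b-i}$ built into $\Psi$. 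Once those two points are in place, the cancellation in the final signed sum is the already-established combinatorial identity of Section \ref{AlternatingSumSection}, and the remaining steps --- the reduction to basis elements, the subalgebra property, and the trace computation $\mathrm{tr}(\widetilde{\iota}_\pi) = \dim\pi$ --- are formal consequences of the constructions in Subsections \ref{InterpolatedIsotyp}--\ref{IntEtaSubsect}.
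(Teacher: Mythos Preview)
Your plan diverges substantially from the paper's argument, and the key step you rely on does not do what you need it to.

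The paper's proof is very short: it observes that $\Psi$ and $\Phi$ both preserve dimension, so $\Psi(K(\Phi)([\pi]))$ is an integer combination of irreducibles of total dimension $\dim\pi$; it then argues that this combination cannot involve more than one irreducible, since otherwise summing over all $\pi$ (weighted by their multiplicity in $\omega^{\otimes B}$) would contradict the interpolated decomposition \rref{InterpolatedEtaCorrStatement}, which already matches the genuine one on the level of total objects. No explicit computation of how $\widetilde{\iota}_\pi$ refines in the larger algebra is needed.

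Your approach instead attempts exactly that explicit computation, and then appeals to Section~\ref{AlternatingSumSection} to collapse the resulting signed sum to $[\pi]$. This appeal is misplaced. The alternating sums $A_k^\pm(\theta,N')$ and $\mathcal{A}_k(\rho,N'_\rho)$ live on the \emph{coefficient} side (they are $O(W,B)$-representations in the $\eta$ case, $Sp(V)$-representations in the $\zeta$ case), and Theorem~\ref{MorePreciseAlternatingSumProp} shows that each such alternating sum is a \emph{genuine positive} combination of symbols satisfying a top-coordinate bound --- it never asserts that any signed combination collapses to a single irreducible. So even granting your description of the constituents $(S_j,m_j)$ of $\Phi(\pi)$, Section~\ref{AlternatingSumSection} would not finish the argument. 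What would finish it is the observation that if every $S_j$ occurring in $\Phi(\pi)$ has true permutation equal to $\pi$, then $\Psi(K(\Phi)([\pi])) = (\sum_j m_j\varepsilon_j)[\pi]$, and equating dimensions forces the scalar to be $1$; but establishing that ``only true-permutation-$\pi$'' constituents occur is precisely the obstacle you flag and do not resolve. The paper sidesteps this entirely with the global consistency argument against \rref{InterpolatedEtaCorrStatement}.
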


\begin{proof}
Note that this holds immediately when formal classification data is actually
genuine classification data defining a $\text{Sp}_{2N} (\F_q)$-representation,
since both $K (\Phi)$ and $\Psi$ act as the identity on these objects, considered in either
categories. The general statement follows since dimensions are preserved by $\Psi$ and $\Phi$, and it is not
possible for $\Psi \circ K (\Phi)$ when applied to an irreducible
representation of $\mathscr{C}^{gen}_\rho (N)$ to output a linear combination of multiple different
irreducible representations in $Rep (\text{Sp}_{2N} (\F_q))$
with integer coefficients, both by dimension and the fact
that it would violate the decomposition \rref{InterpolatedEtaCorrStatement}.
\end{proof}

Therefore, the decomposition of the restricted oscillator representation
as a genuine representation of $\text{Sp}_{2N} (\F_q)$ can be obtained from
\rref{InterpolatedEtaCorrStatement} by applying $\Psi$, and cancelling terms as in $K(\mathscr{C}_B (N))$.
It remains to reconcile this cancellation with the claimed answer.
Consider a term arising from $\rho \in \widehat{O (W[-j],B[-j])}$.
Write $\rho= r^{\text{O} (W[-j], B[-j])}[(s),u]^{\pm 1}$.
Suppose $s \in \text{Sp}_{2(m-j)} (\F_q) = (SO_{2(m-j)+1} (\F_q))^*$
has $-1$ as an eigenvalue of multiplicity $2\ell$, and write
$Z_{\text{Sp}_{2(m-j)} (\F_q)} (s)^\circ = H \times \text{Sp}_{2\ell} (\F_q)$.
Then $\phi^\pm (s)$ remains as defined in the stable range, so that
$Z_{SO_{2N+1} (\F_q)} (\phi^\pm (s))^\circ = H^* \times SO_{2(N-m+\ell)}^\pm (\F_q)$.
If $\phi^\pm (u)$ is constructible, then the interpolated eta correspondence
outputs a genuine $\text{Sp}_{2N} (\F_q)$-representation, which is our proposed 
metastable construction
of $\eta^V_{W,B}$.
In the case corresponding to when $\phi^\pm (u)$ is ``inconstructible," however,
the interpolated eta correspondence outputs a representation involving an illegal symbol
\beg{IllegalSymbolOriginal}{\phi (u) = \widetilde{u_{H^*}} \otimes {\lambda_1< \dots < \lambda_a \hspace{5mm} N'_\rho
\choose
\mu_1< \dots < \mu_b}.}
This term 
$$r^{\text{Sp} (V)}[ (\phi^\pm (s)), \phi^\pm (u) , \epsilon (s) \cdot disc(B)] \otimes \text{Ind}_{P_i} ( r^{\text{O} (W[-j], B[-j])}[(s),u]^{\pm 1})$$
must be eliminated when we semisimplify.

To see where this term cancels, suppose that
$$\lambda_1< \dots < \lambda_{c} < N_\rho' < \lambda_{c+1} < \dots < \lambda_a,$$
then, for $c+1 \leq k \leq a$, we have that the multiplicity $2p$ of $-1$ as an eigenvalue of $s$
must be greater than or equal to $2(\lambda_{k} - N'_\rho)$.
first let us write 
$$j^{(k)} = j-\lambda_{k} + N'_\rho .$$
Note that by the rank conditions of Lusztig symbols,
the multiplicity $2p$ of $-1$ as an eigenvlaue must be greater than or equal to $2(\lambda_{k} -N'_\rho)$.
Then write $s^{(k)}$ for the semisimple element of 
$$\text{Sp}_{2(m-j+\lambda_{k} -N'_\rho)} (\F_q) = \text{Sp}_{2(m-j^{(k)})} (\F_q)= (SO_{2(m-j^{(k)})+1} (\F_q))^*$$
obtained by removing
$2(\lambda_{k} - N'_\rho)$ eigenvalues $-1$.
Write
$$\begin{array}{c}
u^{(k)} = u_{H^*} \otimes  \\
\\
\displaystyle {\lambda_1< \dots < \lambda_{c}< N'_\rho < \lambda_{c+1}
< \dots < \lambda_{k-1} <\lambda_{k+1}<\dots < \lambda_a
\choose \mu_1< \dots < \mu_b}
\end{array}$$
Consider the representation corresponding to the Lusztig data
$$r^{O(W[-j^{(k)}],B[-j^{(k)}] )}[(s^{(k)}), u^{(k)}]^{\pm 1}$$
giving an irreducible representation of $O_{2(m-j^{(k)})+1} (\F_q)$.
Then $\psi^\pm (s) = \psi^\pm (s^{(k)})$, and we have
$$N'_{r^{O(W[-j^{(k)}],B[-j^{(k)}] )} [(s^{(k)}), u^{(k)}]^{\pm 1}} = \lambda_k .$$
Therefore,
$$\begin{array}{c}
\phi^\pm (u^{(k)}) = \widetilde{u_{H^*}}\otimes\\
\\
\displaystyle {\lambda_1< \dots < \lambda_{c}< N'_\rho < \lambda_{c+1}
< \dots < \lambda_{k-1} <\lambda_{k+1}<\dots < \lambda_a \hspace{5mm} \lambda_k
\choose \mu_1< \dots < \mu_b}.
\end{array}$$
Note that the interpolated symbol part has dimension $(-1)^{a-k}$, multiplied by the dimension of the 
genuine symbol
\beg{LegalSymbolFinally}{{\lambda_1< \dots < \lambda_c< N'_\rho< \lambda_{c+1} < \dots < \lambda_a
\choose \mu_1< \dots < \mu_b}.}
Therefore, at $j^{(k)}$, this contributes a term
\beg{FinalktheTermRecursiveAltIndSummands}{\begin{array}{c}
r^{\text{Sp} (V)} [(\phi^\pm (s)), \psi^\pm (u^{(k)}),  \epsilon (s) disc(B)]\otimes\\[1ex]
 \text{Ind}_{P_{j^{(k)}}} (r^{O_{2(m-j^{(k)})+1} (\F_q)}[(s^{(k)}), u^{(k)}]^{\pm 1}),
\end{array}}
which has terms that can cancel recursively (e.g. $k=1$ completely cancels \rref{IllegalSymbolOriginal}, though it introduces new summands that are illegal if $a>1$),
since switching coordinates in interpolated Lusztig symbols
gives a change in the sign of dimension.
At $k=a$, we have the legal symbol \rref{LegalSymbolFinally}.

Putting together the terms \rref{IllegalSymbolOriginal} and \rref{FinalktheTermRecursiveAltIndSummands}
for $c+1 \leq k \leq a$, this gives the final genuine term, which is
$\eta^V_{W,B} (r^{O(W[-j^{(a)}],B[-j^{(a)}] ) } [(s^{(a)}), u^{(a)}]^{\pm 1})$, 
tensored with a coefficient of the $O(W,B)$-representation given by the alternating sum
$$\bigoplus_{k=c}^{a} (-1)^{a-k} \text{Ind}_{P_{j_{(k)}}}  (r^{O(W[-j^{(k)}],B[-j^{(k)}] ) }[(s^{(k)}), u^{(k)}]^{\pm 1})$$
(writing $j^{(0)} = j$, $s^{(0)} =s$, $u^{(0)} = u$).
By definition, this $O(W,B)$-representation is precisely the proposed alternating sum 
$$\mathcal{A}_{j_{(a)}} (r^{O(W[-j^{(a)}],B[-j^{(a)}] ) }[(s^{(a)}), u^{(a)}]^{\pm 1} , N'_{r^{O(W[-j^{(a)}],B[-j^{(a)}] ) }[(s^{(a)}), u^{(a)}]^{\pm 1}}).$$
Therefore, in the final decomposition of $Res_{\text{Sp}(V)\times O(W,B)} (\boldsymbol \omega[V\otimes W])$
in the genuine representation categories $Rep (\text{Sp} (V))$, $Res (O(W,B))$, we obtain
that the illegal constructions should be taken to be $0$, in exchange for replacing $\text{Ind}_{P_{j^{(a)}}}$
in the corresponding final legal level by $\mathcal{A}_{j^{(a)}}$, exactly as claimed.

We also note that Theorems \ref{ExplicitTheoremSymp}, \ref{ExplicitTheoremOrtho}
can be checked elementarily, using the global dimension formula
calculated in \cite{TotalHoweI, TotalHoweII} and observations about how
the dimensions of endomorphism algebras of tensor powers of oscillator representations
degenerate in the degrees corresponding to metastable reductive dual pairs.

\subsection{An ``inductive" construction}\label{InductiveConstruction}
We now describe how the eta and zeta correspondence can be used
to construct families of irreducible representations of the finite symplectic and orthogonal
groups recursively. This process re-expresses any irreducible representation
to a chain of eta and (possibly signed) zeta correspondences, applied to a
certain ``terminal" representations which can not be expressed as the eta or zeta correspondence applied
to a representation of a group of lower rank.
A terminal representations is precisely one such that the semisimple component $(s)$
of its classification data has a centralizer only involving factors of type
$A$ or ${}^2 A$ (in which case, its centralizer is automatically connected).

For a fixed choice of symplectic space $V$, let us consider the
eta correspondences
$$\eta_V^{W,B}: \widehat{O(W,B)} \rightarrow \widehat{\text{Sp}(V)} \cup \{0\},$$
for every choice of orthogonal space $(W,B)$ such that $(\text{Sp}(V), O(W,B))$
forms a reductive dual pair in the symplectic metastable range. 
Every irreducible representation $\rho \in \text{Sp}_{2N} (\F_q)$
is contained as a summand of a tensor product of oscillator representations
\beg{SpsideDegreenTensor}{{ \omega}_{a_1}[V] \otimes { \omega}_{a_2}  [V] \otimes \dots \otimes { \omega}_{a_n} [V],}
of degree $n \leq 2N$,
which we can further interpret as the restriction of an oscillator representation $\omega [V\otimes W]$
for an $n$-dimensional orthogonal space $W$ with respect to a form $B$ given by the diagonal matrix
with entries $a_1, \dots, a_n$, along the inclusion
$\text{Sp}(V)\subseteq \text{Sp}(V) \times \text{O}(W,B) \subseteq \text{Sp}(V\otimes W)$.
In particular, we find that the union of the non-zero images of the eta correspondences makes up the
whole set of irreducible representation
$$\widehat{\text{Sp}(V)} = \coprod_{\tiny \begin{array}{c}
(W,B), \\ \text{dim} (W) \leq 2N\end{array}} \text{Im} (\eta^{W,B}_V) \smallsetminus \{0\}.$$

Therefore, knowing $\pi \in \widehat{O(W,B)}$ satisfying
$\eta^V_{W,B} (\pi) = \rho$, we may consider the idempotent
\beg{TrueIdempotentConcreteInd}{\frac{dim (\pi) }{|O(W,B)|} \sum_{g\in O(W,B)} \chi_{\pi} (g)^{-1} \cdot g \in \C O(W,B)}
in 
$$\text{End}_{\text{Sp}(V)} (\omega [V\otimes W]) =\text{End}_{\text{Sp} (V)} (\bigotimes_{i=1}^n { \omega} [V]_{a_i}),$$
where $\chi_\pi: O(W,B) \rightarrow \C^\times$
denotes the character corresponding to $\pi$. The image of \rref{TrueIdempotentConcreteInd}
in \rref{SpsideDegreenTensor} recovers $\rho$.
Outisde of the stable range, 
the constructions of the reflection elements are readily interpolated.
In the metastable range, if $\rho \otimes \pi$ is in the top part of
the restricted oscillator representation, then the corresponding idempotent
\rref{TrueIdempotentConcreteInd} survives and is unaltered by semisimplification.


On the other hand, we note that for every irreducible representation $\pi $ of $SO(W,B)$,
at least one irreducible representation obtained as a summand of the induction of $\pi $ from $SO(W,B)$
to $O(W,B)$ occurs in a tensor product
$$(\epsilon (det) \otimes \C W)^{\otimes N}$$
of degree $N \leq h_W$.
Again, for any irreducible representation $\rho \in \widehat{\text{Sp}(V)}$, the summand
$\zeta^{V}_{W,B} (\rho)$ can be obtained as the image of the idempotent
\beg{}{\frac{dim (\rho) }{|\text{Sp}(V)|} \sum_{g\in \text{Sp}(V)} \chi_{\rho} (g)^{-1} \cdot g \in \C \text{Sp}(V)}
in 
$$\text{End}_{O(W,B)} (\omega [V\otimes W]) = \text{End}_{O(W,B)} ((\epsilon(det) \otimes \C W)^{\otimes N}),$$
again where
$\chi_\rho: \text{Sp}(V) \rightarrow \C^\times$
denotes to character corresponding to $\rho$.
All representations of $O(W,B)$ can then be obtained by
tensoring the representations appearing in the image of the zeta correspondences
by a sign representation.

In particular, one can attempt
to attempt to produce a given representation of a symplectic group
$\text{Sp}(V)$ or an orthogonal group $O(W,B)$ as a chain of alternating eta and zeta correspondences
(and tensors with sign representations) applied to a simpler representation of a
smaller symplectic or orthogonal group. However, we note that both the eta
and zeta correspondences can never alter any input classification 
to introduce eigenvalues other than $1$ or $-1$ to the semisimple component of the data.
Therefore, we find that any irreducible representation

\begin{definition}
Consider an irreducible representation $\rho$ of a symplectic group or orthogonal group.
\begin{enumerate}
\item For $\rho \in \widehat{\text{Sp}_{2N} (\F_q)}$, we call $\rho$ {\em terminal} if in its
classification data,
the semisimple compotent $(s) \in SO_{2N+1} (\F_q) = (\text{Sp}_{2N} (\F_q))^*$ has $1$
as an eigenvalue of multiplicity one and no $-1$ eigenvalues.

\item For $\rho \in \widehat{O_{2m+1} (\F_q)}$, we call $\rho$ {\em terminal} if in
the classification data of the restriction of $\rho$ to the special orthogonal group
$SO_{2m+1} (\F_q)$,
the semisimple compotent of the data
$(s) \in \text{Sp}_{2m} (\F_q) = (SO_{2m+1} (\F_q))^*$ has no $1$ or
$-1$ eigenvalues.

\item For $\rho \in \widehat{O_{2m}^\pm (\F_q)}$, we call $\rho$ {\em terminal} if
in the classification data of the restriction of $\rho$ to the special
orthogonal group $SO_{2m}^\pm (\F_q)$,
the semisimple compotent $(s) \in SO_{2m}^\pm (\F_q) = (SO_{2m}^\pm (\F_q))^*$ 
no $1$ or $-1$ eigenvalues.

\end{enumerate}

\end{definition}

We may then produce any irreducible representation of a symplectic or orthogonal group
by applying alternating eta and zeta correspondences and possibly tensor products with sign representations
during steps of the construction in $\widehat{O(W,B)}$.
Suppose we are given a representation whose Lusztig data has semisimple component
involving $1$ as an eigenvalue contributing a factor of the centralizer of rank $p$
and $-1$ as an eigenvalue contributing a factor of the centralizer of rank $\ell$.
Say the corresponding factors of the unipotent component of the classification data
are symbols of the form
$${\lambda_1< \dots < \lambda_a \choose \mu_1< \dots < \mu_b} \otimes {\lambda_1'< \dots < \lambda_{a'}' \choose \mu_1'< \dots < \mu_{b'}'}.$$
Then (disregarding steps where we tensor with sign representations
to obtain $O(W,B)$-representations which are only obtained up to sign
in the zeta correspondence), $a+b+a'+b'$ steps of applying eta and zeta correspondences
are needed.


\subsection{Character computations}\label{CharaAlgorithmSubSect}
In this subsection, we use the ``inductive" construction to describe an algorithm
for computing characters of an arbitrary given irreducible representation.
There are three major steps to be considered: The calculation of the character
of the top part of an oscillator representation, how to use this top part's character
to obtain the character of the representation obtained from
applying the eta or zeta correspondence to a given input representation,
and the calculation of the characters of the terminal representations.

The action of the symplectic group generators on our model of the oscillator rerpesentation are
as follows:
\beg{ActionOfGeneratorsCharaaSubS}{\begin{array}{c}
\displaystyle \omega_a \begin{pmatrix} 1 & A \\ 0 & 1\end{pmatrix} (v) = \psi (\frac{a}{2} A(v,v)) \cdot (v)\vspace{1.5mm}\\
\displaystyle \omega_a \begin{pmatrix} 0 & B \\ -B^{-1} & 0\end{pmatrix} (v) = \sum_{w \in V_-}
\frac{\psi (aB(v,w))}{\displaystyle \sum_{u \in V_-} \psi (\frac{a}{2}B(u,u))} \cdot (w)\\
\omega_a \begin{pmatrix} (C^T)^{-1} & 0 \\ 0 & C\end{pmatrix} (v) = \epsilon_{\F_q} (-1) \cdot (Cv).
\end{array}}
The action of any symplectic group element can then be deduced by expressing it as a
product of these standard generators.
In particular, in principle, we can form a closed expression for 
the character of the oscillator representation at any symplectic group element.
For example, in the case of the standard generators, we find character values
$$\chi_{\omega_a} \begin{pmatrix} 1 & A \\ 0 & 1\end{pmatrix} = (-1)^{n (\ell+1)} \text{disc}(\frac{A}{2}) (\epsilon_{\F_q} (-1) q)^{n/2}$$
$$\chi_{\omega_a} \begin{pmatrix} 0 & B \\ -B^{-1} & 0\end{pmatrix} = (\epsilon_{\F_q} (-2))^n $$
$$\chi_{\omega_a} \begin{pmatrix} (C^T)^{-1} & 0 \\ 0 & C\end{pmatrix} = \epsilon_{\F_q} (\text{det}(C)) \cdot q^{\text{dim} (\text{ker} (C-I))}.$$

\vspace{3mm}

Now, while the expression for the restricted
oscillator representation in terms of the extended eta correspondence and
the alternating sums given in
Theorems \ref{ExplicitTheoremSymp} and \ref{ExplicitTheoremOrtho} are more ideal
from the point of view of a genuine decomposition statement, it is
inefficient to organize the characters of the alternating sums when doing concrete
computations.

Instead, for the purpose of computing characters,
let us introduce the {\em virtual eta} and {\em virtual zeta correspondences}:
For any reductive dual pair $(\text{Sp}(V), \text{O}(W,B))$ in the symplectic metastable range,
we define the virtual eta correspondence as the function from the set of irreducible
$O(W,B)$-representations to the Grothendieck group on the category of representations of $\text{Sp}(V)$
$$\boldsymbol\eta^{W,B}_V : \widehat{O(W,B)} \rightarrow K(Rep( \text{Sp}(V)))$$
so that the restriction of the oscillator representation to $\text{Sp}(V) \times O(W,B)$ decomposes as
\beg{VirtualDecompCharaSubsEta}{ \bigoplus_{k=0}^{h_W}
\bigoplus_{\pi \in \widehat{O(W[-k],B[-k])}} \boldsymbol\eta^{W[-k],B[-k]}_V (\pi) \otimes Ind_{P^k_{O(W,B)}} (\pi^-).s}
Similarly, for any reductive dual pair $(\text{Sp}(V), O(W,B))$ in the orthogonal metastable range,
we define the virtual zera correspondence as the function from the set of irreducible
$\text{Sp}(V)$-representations to the Grothendieck group on the category of $O(W,B)$-representations
$$\boldsymbol\zeta_{W,B}^V : \widehat{\text{Sp}(V)} \rightarrow K(Rep( O(W,B)))$$
so that the restriction of the oscillator representation to $\text{Sp}(V) \times O(W,B)$ decomposes as
\beg{VirtualDecompCharaSubsZeta}{\bigoplus_{k=0}^{dim(V)/2}
\bigoplus_{\rho \in \widehat{\text{Sp}(V[-k])}} Ind_{P^k_{\text{Sp}(V)}} (\rho^-) \otimes \boldsymbol\zeta_{W,B}^{V[-k]} (\rho).}

Specifically, if $(\text{Sp}(V), O(W,B))$ is in the symplectic (resp. orthogonal)
stable range, we take $\boldsymbol\eta^{W,B}_V$ (resp. $\boldsymbol \zeta^V_{W,B}$)
to be equal to $\eta^{W,B}_V$ (resp. $\zeta^V_{W,B}$).
If $(\text{Sp}(V), O(W,B))$ is in the symplectic metastable range,
we take $\boldsymbol\eta^{W,B}_V = \Psi \circ \eta^{t=N}_{W,B}$,
where $\eta^{t=N}_{W,B}$ denotes the interpolated eta correspondence applied at $t=N$,
and $\Psi$ is defined as in \rref{PsiGrothDefn}. 
Similarly, if $(\text{Sp}(V), O(W,B))$ is in the orthogonal metastable range,
we take
$\boldsymbol\zeta_{W,B}^V = \Psi \circ \zeta^{t=N}_{W,B}$,
where $\zeta^{t=N}_{W,B}$ denotes the interpolated eta correspondence applied at $t=N$.

Concretely, for an input representation $\rho$ of $\boldsymbol \eta^{W,B}_V$ or $\boldsymbol \zeta^V_{W,B}$,
we compute the output by attempting the appropriate case of the construction
described in Section \ref{StatementSection}. Say the
symbol ${\lambda_1< \dots < \lambda_a \choose \mu_1< \dots < \mu_b}$ corresponds
to the alterable factor in the unipotent
component of $\rho$'s classification data. 
If the output is ``constructible," meaning that the genuine eta or zeta correspondence
$\eta^{W,B}_V (\rho)$ or $\zeta^V_{W,B} (\rho)$ is a non-zero representation, we put
$$\boldsymbol \eta^{W,B}_V (\rho) := \eta^{W,B}_V (\rho) \text{ or } \boldsymbol \zeta^{W,B}_V (\rho) := \zeta^{W,B}_V (\rho) .$$
Suppose now that it is not construcible, meaning that (possibly after switching rows so that the original
construction proposes adding $N_\rho'$ to the first row of the alterable symbol) for some
$i\leq a$, we have $\lambda_{i-1} < N_\rho' \leq \lambda_{i}$ (considering $\lambda_0 = 0$).
If $N_\rho' = \lambda_{i}$, then we put $\boldsymbol \eta^{W,B}_V (\rho)$
or $\boldsymbol \eta^{W,B}_V (\rho)$ to be the zero representation.
Then
we complete the construction with the factor
$${\lambda_1< \dots < \lambda_{i-1} < N_\rho'< \lambda_{i} < \dots < \lambda_a \choose \mu_1< \dots < \mu_b}$$
in place of where we would have used the symbol with $N_\rho'$ concatenated at the end
of the first row of the alterable symbol. This constructs a genuine representation,
which we multiply by the coefficient $(-1)^{a-i}$
in $K(Rep (\text{Sp}(V))$ or $K (Rep (O(W,B)))$. 
We set the resulting (possibly virtual) representation
to be $\boldsymbol \eta^{W,B}_V (\rho)$ or $\boldsymbol \zeta^V_{W,B} (\rho)$.
In particular, the virtual eta and zeta correspondences
always output a sign times an irreducible representation.
We note that for a representation which is sent to zero by the original
eta or zeta correspondence, the virtual eta or zeta correspondence outputs a sign
of an irreducible representation already in the image of  the genuine eta or zeta correspondence
for a smaller orthogonal or symplectic
group (respectively). 
We consider the character of
a virtual representation to be the linear combination (with possible negative coefficients)
of the characters of its genuine terms.

Now let us consider the virtual representations
\beg{VirtualEtaTop}{\widetilde{\omega^{\text{top}}} [V\otimes W] = \bigoplus_{\pi \in \widehat{O(W,B)}}
\boldsymbol\eta^{W,B}_V (\pi) \otimes \pi}
of $\text{Sp}(V) \times O(W,B)$ for every reductive dual pair in the symplectic metastable range,
and the virtual representations
\beg{VirtualZetaTop}{\widetilde{\omega^{\text{top}'}} [V\otimes W] = \bigoplus_{\rho \in \widehat{\text{Sp}(V)}}
\rho \otimes \boldsymbol\zeta_{W,B}^V (\rho) }
of $\text{Sp}(V) \times O(W,B)$ for every reductive dual pair in the orthogonal metastable range.
These virtual ``top parts" are key to understanding the relationship of an irreducibe representation's
character and the character of the representation obtained by applying the (virtual) eta or zeta correspondence.

First, we must compute the character values of \rref{VirtualEtaTop} and \rref{VirtualZetaTop}.
This can be done recursively by comparing the top parts to the full oscillator representations at each level,
similarly to how the dimensions of the stable top parts were computed in
\cite{TotalHoweI}
In the symplectic metastable case, we may re-write the decomposition of the restriction
of $\omega [V\otimes W]$ to $\text{Sp}(V) \times O(W,B)$ described in \rref{VirtualDecompCharaSubsEta} as 
$$\bigoplus_{k=0}^{h_W} \text{Ind}_{\text{Sp}(V) \times P_k^{O(W,B)}}^{\text{Sp}(V) \times O(W,B)} (\widetilde{\omega^\text{top}} [V\otimes W[-k]]^-)$$
and similarly, in the orthogonal metastable case, we may re-write the decomposition restriction of
$\omega [V\otimes W]$ to $\text{Sp}(V) \times O(W,B)$ described in \rref{VirtualDecompCharaSubsZeta} as 
$$\bigoplus_{k=0}^{dim (V)/2} \text{Ind}_{P_k^{\text{Sp}(V)}\times O(W,B)}^{\text{Sp}(V) \times O(W,B)} (\widetilde{\omega^{\text{top}'}} [V[-k]\otimes W]^-).$$

Now we recall that to compute the character of an induction $\text{Ind}_H^G (\rho)$
for a subgroup $H \subseteq G$ and an $H$-representation, for a given $g\in G$, we have
$$\chi_{\text{Ind}_H^G (\rho)} (s) = \frac{1}{|H|}\sum_{\tiny \begin{array}{c} x \in G\\[-0.5ex] xgx^{-1} \in H\end{array}} \chi_\rho (xgx^{-1}).$$
Hence, we obtain that for every choice of $g \in \text{Sp}(V)$, $h \in O(W,B)$ for $(\text{Sp}(V), O(W,B))$ in the
symplectic metastable range,
the sum
$$\sum_{k=0}^{h_W} \frac{1}{|\text{Sp}(V) \times P_k^{O(W,B)}|} \hspace{-7.5mm}
\sum_{\tiny \begin{array}{c} x \in \text{Sp}(V) \times O(W,B) \\[-0.5ex] x(g\otimes h)x^{-1} \in \text{Sp}(V) \times  P_k^{O(W,B)}
\end{array}} \hspace{-7.5mm} \chi_{\widetilde{\omega^{\text{top}}} [V\otimes W[-k]]^-} (x (g\otimes h)x^{-1})$$
is equal to the character value $\chi_{\omega [V\otimes W]} (g\otimes h)$, which for
any explicit choice of $g$ and $h$ could be calculable using \rref{ActionOfGeneratorsCharaaSubS}.
Similarly, for every $g \in \text{Sp}(V)$, $h \in O(W,B)$, for $(\text{Sp}(V), O(W,B))$ in the
orthogonal metastable range, the sum
$$\sum_{k=0}^{dim(V)/2} \frac{1}{|O(W,B) \times P_k^{\text{Sp}(V)}|} \hspace{-9.5mm}
\sum_{\tiny \begin{array}{c} x \in \text{Sp}(V) \times O(W,B) \\[-0.5ex] x(g\otimes h)x^{-1}\hspace{-0.5mm} \in P_k^{\text{Sp}(V)}\hspace{-0.5mm}\times O(W,B)
\end{array}} \hspace{-10mm} \chi_{\widetilde{\omega^{\text{top}'}} [V[-k]\otimes W]^-} (x (g\otimes h)x^{-1})$$
is equal to the character value $\chi_{\omega [V\otimes W]} (g\otimes h)$.
This gives a linear system of equations that can then be used to recursively calculate the virtual characters
of the representations $\widetilde{\omega^{\text{top}}} [V\otimes W]$ and
$\widetilde{\omega^{\text{top}'}} [V\otimes W]$.

Suppose we have calculated the virtual characters
of $\widetilde{\omega^{\text{top}}} [V\otimes W]$ (resp.
$\widetilde{\omega^{\text{top}'}} [V\otimes W]$) for
pairs $(\text{Sp}(V), O(W,B))$ in the symplectic (resp. orthogonal) metastable range.
We then use their values to deduce the effect of the extended eta (resp.
zeta) correspondence on irreducible representations:
\begin{proposition}\label{CharacterEtaZetaRecipeProp}
Fix a reductive dual pair $(\text{Sp}(V), O(W,B))$. 
\begin{enumerate}
\item Suppose the pair is
in the symplectic metastable range, and fix an irreducible representation $\pi$
of $O(W,B)$ such that $\eta^{W,B}_V (\pi)$ is non-zero. Then, for any $g\in \text{Sp}(V)$,
the value of the character associate to $\eta^{W,B}_V (\pi)$ as $g$ can be computed as
\beg{EtaCorrMetastabCharaRecipe}{\chi_{\eta^{W,B}_V (\pi)} (g) = \sum_{(h) \in O(W,B)} \frac{|(h)|}{|O(W,B)|} \cdot
\overline{\chi_\pi (h)} \cdot \chi_{\widetilde{\omega^{\text{top}}} [V\otimes W]} (g\otimes h).}
\vspace{2mm}

\item Suppose the pair is in the orthogonal metastabl range, and fix an irreducible representation $\rho$
of $\text{Sp}(V)$ such that $\zeta_{W,B}^V (\rho)$ is non-zero. Then, for any $h\in O(W,B)$,
the value of the character associate to $\zeta_{W,B}^V (\pi)$ as $h$ can be computed as
\beg{ZetaCorrMetastabCharaRecipe}{\chi_{\zeta_{W,B}^V (\rho)} (h) = \sum_{(g) \in \text{Sp}(V)} \frac{|(g)|}{|\text{Sp}(V)|} \cdot
\overline{\chi_\rho (g)} \cdot \chi_{\widetilde{\omega^{\text{top}'}} [V\otimes W]} (g\otimes h).}
\end{enumerate}
\end{proposition}

\noindent In fact, out proof of this result also shows that \rref{EtaCorrMetastabCharaRecipe} is equal to 
\beg{EtaCorrCharaRecipeGenuineTopPart}{\sum_{(h) \in O(W,B)} \frac{|(h)|}{|O(W,B)|} \cdot
\overline{\chi_\pi (h)} \cdot \chi_{\omega^{\text{top}} [V\otimes W]} (g\otimes h)}
for choices of $\pi$ which survive the extended eta correspondence.
The expressions \rref{EtaCorrMetastabCharaRecipe}
and \rref{EtaCorrCharaRecipeGenuineTopPart} are different only for irreducible representations
$\pi \in \widehat{\text{O}(W,B)}$ with $\eta^V_{W,B} (\pi) = 0$, in which case
\rref{EtaCorrMetastabCharaRecipe} gives the virtual character
$\chi_{\boldsymbol\eta^{W,B}_V}(g)$ (which will be a sign times a genuine irreducible charcter),
while the sum
\rref{EtaCorrCharaRecipeGenuineTopPart} gives $0$.

\begin{proof}[Proof of Proposition \ref{CharacterEtaZetaRecipeProp}]
This statement follows from elementary manipulations of character theory.
Let us fix an order of the conjugacy classes of $O(W,B)$ and an order of
its irreducible characters. We may then consider the square matrix
obtained from the character table of $O(W,B)$ written according to these orderings.
Denote this matrix by $\text{ct}(O(W,B))$. By orthogonality of characters, recall that
the inverse of $\text{ct} (O(W,B))$ can be expressed as a diagonal matrix consisting of
the fractions $|(h)|/|O(W,B)|$ of the order of a conjugacy class $(h)$ of $O(W,B)$ divided by the
group order, multiplied on the right by the conjugate of the transpose of $\text{ct} O(W,B)$
\beg{CtInverse}{ \text{ct} (O(W,B))^{-1} = \begin{pmatrix}
\frac{|(h_1)|}{|O(W,B)|} & 0 & \dots \\
0 & \frac{|(h_2)|}{|O(W,B)|} & \\
\vdots & & \ddots
\end{pmatrix} 
\cdot \overline{\text{ct} (O(W,B))}^T.}

Now fix a group element $g \in \text{Sp}(V)$. Then, for any $h \in O(W,B)$, by definition \rref{VirtualEtaTop},
we find that the virtual character of the representation $\widetilde{\omega^\text{top}}$
applied to $g\otimes h \in \text{Sp}(V) \times O(W,B)$ is
$$\chi_{\widetilde{\omega^\text{top}}[V\otimes W]} ( g\otimes h) =
\sum_{\pi \in \widehat{O(W,B)}} \chi_{\boldsymbol \eta^{W,B}_V (\pi)} (g) \cdot \chi_{\pi} (h).$$
In terms of matrices, this can be interpreted as the statement that the column
of character values $\chi_{\widetilde{\omega^\text{top}}[V\otimes W]} (g\otimes h)$
(varying conjugacy classes $(h)$) is obtained by multiplying the transpose of the character
table $\text{ct} O(W,B)$ by the column of character values $\chi_{\boldsymbol \eta^{W,B}_V (\pi)} (g)$
(varying $\pi \in \widehat{O(W,B)}$):
$$ \begin{pmatrix}
\chi_{\widetilde{\omega^\text{top}}[V\otimes W]} ( g\otimes h_1)\\
\chi_{\widetilde{\omega^\text{top}}[V\otimes W]} ( g\otimes h_2) \\
\vdots \\
\end{pmatrix} = \text{ct} (O(W,B))^T \cdot \begin{pmatrix}
\chi_{\boldsymbol \eta^{W,B}_V (\pi_1)} (g)\\
\chi_{\boldsymbol \eta^{W,B}_V (\pi_2)} (g)\\
\vdots \\
\end{pmatrix}.
$$
Hence, by applying \rref{CtInverse}, we can calculate the column of character values
$\chi_{\boldsymbol \eta^{W,B}_V (\pi)} (g)$
as
$$\overline{\text{ct} (O(W,B))} \cdot \begin{pmatrix}
\frac{|(h_1)|}{|O(W,B)|} & 0 & \dots \\
0 & \frac{|(h_2)|}{|O(W,B)|} & \\
\vdots & & \ddots
\end{pmatrix} \cdot \begin{pmatrix}
\chi_{\widetilde{\omega^\text{top}}[V\otimes W]} ( g\otimes h_1)\\
\chi_{\widetilde{\omega^\text{top}}[V\otimes W]} ( g\otimes h_2) \\
\vdots \\
\end{pmatrix}$$
Multiplying this out, we find that each character value $\chi_{\boldsymbol\eta_V^{W,B} (\pi)} (g)$
can be calculated as the sum
$$
\chi_{\boldsymbol\eta^{W,B}_V (\pi)} (g) = \sum_{(h) \in O(W,B)} \frac{|(h)|}{|O(W,B)|} \cdot
\overline{\chi_\pi (h)} \cdot \chi_{\widetilde{\omega^{\text{top}}} [V\otimes W]} (g\otimes h),$$
matching the right hand side of \rref{EtaCorrMetastabCharaRecipe}
The claim then follows, since for $\pi \in \widehat{O(W,B)}$ such that
$\eta^{W,B}_V (\pi)$ is non-zero, we have
$\boldsymbol \eta^{W,B}_V (\pi) = \eta^{W,B}_V (\pi)$. 
\end{proof}


\section{The Gurevich-Howe rank conjecture}\label{GurevichHoweSection}

Finally, as advertised in the introduction, the purpose of this section is
to apply our calculation of the eta correspondence to prove Theorem \ref{MainMatchRes},
verifying the rank conjecture of S. Gurevich and R. Howe, which predicts
the equality of $U$-rank and tensor-rank
for every representation of a symplectic group $\text{Sp}_{2N} (\F_q)$ not attaining top possible
$U$-rank $N$.

First, in Subsection \ref{BackgroundSect}, we recall the definitions of $U$-rank and tensor rank in more detail
and recall the results of Gurevich and Howe in \cite{HoweGurevich, HoweGurevichBook}
which reduce Theorem \ref{MainMatchRes}
to a statement that all irreducible representations with tensor rank larger than $N$
have $U$-rank equal to $N$ (see Proposition \ref{AllUnstableTopProp}).
Next, in Subsection \ref{IndSect}, we use our explicit description of the 
extended eta correspondence 
to obtain an induction relation (see Proposition \ref{IndEtaProp})
between tensor ranks from an analogue of the Pieri rule,
which reduces the claimed statement to
lower tensor rank, again.

\subsection{$U$-rank and the eta correspondence}\label{BackgroundSect}
First we recall the definitions of $U$-rank and tensor rank
given in \cite{HoweGurevich, HoweGurevichBook}. We begin with $U$-rank.
Consider a symplectic group $\text{Sp}_{2N }(\F_q)$.
The Siegel unipotent subgroup is defined as
$$U_N = \{
\begin{pmatrix}
I & A\\
0 & I 
\end{pmatrix}
\mid A \in M_{N \times N} (\F_q) \text{ symmetric} \} \subseteq \text{Sp}_{2N} (\F_q).$$
Note that, as a group, $U_N$ is isomorphic to the abelian group of 
symmetric $N\times N$ matrices, with respect to addition.
In particular, we may fix an identification of $U_N$ with its Pontrjagin dual $U_N^*$
in the standard way, i.e. by fixing a non-trivial additive character $\chi_0: \F_q \rightarrow \C^\times$
and identifying each element of $U_N$ corresponding to a symmetric matrix $A$
with the character
$$\begin{array}{c}
 U_N \rightarrow \C^\times\\[1ex]
\begin{pmatrix}
I & B\\
0 & I 
\end{pmatrix} \mapsto \chi_0 (tr(AB)).
\end{array}$$

Gurevich and Howe \cite{HoweGurevich, HoweGurevichBook}
then define the {\em $U$-rank} of an $\text{Sp}_{2N} (\F_q)$-representation
$\rho$ as the maximal rank of a character appearing in its restriction to $U_N$:
\beg{UrkDefn}{rk_U (\rho) := \text{max}\{ rk (\chi) \mid \chi \in  U_N^* \text{ and } \chi \subseteq
Res_{U_N} (\rho) \},}
where for a character $\chi \in U_N^*$, its rank is defined to be the matrix rank of the symmetric
$N \times N$-matrix specifying its
corresponding element of $U_N$.

\vspace{3mm}

On the other hand, the tensor rank of a $\text{Sp}_{2N} (\F_q)$-representation is defined
according to the oscillator representations.
Recall that each oscilaltor representation of a symplectic group $\text{Sp}(V)$
decomposes into two irreducible summands
$$\omega_a [V] = \omega_a^+ [V] \oplus \omega_a^- [V]$$
(from the perspective of the eta correspondence, $\omega_a^\pm [V]$ is obtained by
applying the eta correspondence to the representations $(\pm 1)$ of $\mu_2 = O_1(\F_q)$.
These pieces of the oscillator representation are the smallest non-trivial
irreducible representations of $\text{Sp}(V)$, and they each have $U$-rank $1$.
The {\em tensor rank} of a representation $\rho$ is then defined as the minimal degree $n$
such that every
irreducible component of $\rho$ appears in a tensor product of less than or equal to $n$ oscillator
representations:
$$\begin{array}{c}
rk_\otimes (\rho ):= min\{n \mid \text{for }  \pi \in \widehat{\text{Sp} (V)}, \; \pi \subseteq \rho ,
\text{ there exists } \\[1ex]
 m \leq n, \; a_1, \dots , a_m \in \F_q^\times \text{ with }
\pi \subseteq \omega_{a_1} [ V] \otimes \dots \otimes \omega_{a_m} [V]\}
\end{array}$$

\vspace{1mm}

Now recalling again that a degree $n$ tensor product
$$\omega_{a_1} [V] \otimes \dots \otimes \omega_{a_n} [V]$$
can be considered as the restirction of the oscillator representation of 
a larger symplectic group $\text{Sp}(V\otimes W)$
along the inclusion
$$\text{Sp}(V) \hookrightarrow \text{Sp}(V) \times O(W,B) \hookrightarrow \text{Sp}(V\otimes W),$$
where $(W,B)$ denotes an $n$-dimensional $\F_q$-space with a non-degenerate symplectic form $B$
corresponding to the diagonal matrix with entries $a_1 , \dots , a_n$, we see that understanding
the restricted oscillator representation $Res_{\text{Sp}(V)\times O(W,B)} (\omega [V \otimes W])$
is key to. In particular, the results of this paper for pairs $(\text{Sp}(V), O(W,B))$
in the symplectic stable or metastable range explicitly classify the
irreducible representations of each tensor rank $0 \leq rk_\otimes \leq 2N$.

\vspace{1mm}

In \cite{HoweGurevich, HoweGurevichBook}, Gurevich and Howe
also found a connection between the restricted oscillator representations
$Res_{\text{Sp}(V)\times O(W,B)} (\omega [V \otimes W])$ and U-rank,
which was one of their original motiviation for defining the eta correspondence:

The original statement describing the eta correspondence given
in, say, Theorem 4.3.3 of \cite{HoweGurevichBook}, is that for choices of $V$ and $(W,B)$
in the symplectic stable range (which we recall means $dim (W) \leq dim (V)/2$,
there is a system of injections
$$\eta^V_{W,B}: \widehat{O(W,B)} \hookrightarrow \widehat{\text{Sp}(V)}$$
(we omit the subscript when the source is determined)
such that for every irreducible representations $\rho \in \widehat{O(W,B)}$,
the tensor product $\rho \otimes \eta^V_{W,B} (\rho)$ is a summand of $Res_{\text{Sp}(V)\times O(W,B)} (\omega [V \otimes W])$, and 
\beg{EtaAttainsURank}{rk_U (\eta^V_{W,B}(\rho)) = dim (W).}
Further, every other $\pi \in \widehat{\text{Sp}(V)}$
such that $\rho \otimes \pi$ appears in the restricted oscillator representation has strictly lower
$U$-rank.
(Note that though Theorem 4.3.3 of \cite{HoweGurevichBook} does
not include the case of $dim (W) = dim (V)/2$, 
the result still applies to this case as described in
Remark 4.3.6.)

\vspace{3mm}

First, we note that the results of \cite{HoweGurevich, HoweGurevichBook}
immediately imply the agreement of tensor- and $U$-rank in cases
covered by the symplectic stable range:

\begin{corollary}\label{StableTrivCase}
For irreducible $\text{Sp}_{2N} (\F_q)$-representations $\rho$ of tensor rank
$\leq N$, the notions of rank coincide:
$$rk_\otimes (\rho) = rk_U (\rho).$$ 
\end{corollary}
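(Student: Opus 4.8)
The plan is to deduce this directly from the results of Gurevich and Howe recalled above, specifically the statement \rref{EtaAttainsURank}, together with the fact recalled in this section that the degree-$n$ tensor product $\omega_{a_1}[V] \otimes \dots \otimes \omega_{a_n}[V]$ is the restriction of $\omega[V \otimes W]$ to $Sp(V)$ for $W$ of dimension $n$. First I would translate the tensor-rank hypothesis: if $\rho$ is irreducible with $rk_\otimes(\rho) = n \leq N$, then by definition $\rho$ is a summand of some such degree-$n$ tensor product, i.e. $\rho \subseteq Res_{Sp(V)}(\omega[V \otimes W])$ for an $n$-dimensional symplectic $(W,B)$, and moreover $n$ is minimal with this property. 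Since $n \leq N = \dim(V)/2$, the pair $(Sp(V), O(W,B))$ lies in the symplectic stable range, so the full decomposition \rref{TotalHoweIStSymp} applies and every irreducible summand of $Res_{Sp(V)}(\omega[V\otimes W])$ is either of the form $\eta^V_{W[-k],B[-k]}(\tau)$ for some $k$ and some $\tau \in \widehat{O(W[-k],B[-k])}$, or a constituent of such a term tensored with a parabolic induction — in any case, each irreducible summand is $\eta^V_{W',B'}(\tau)$ for some subspace $W'$ of dimension $n' \leq n$ and some $\tau \in \widehat{O(W',B')}$.

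Next I would combine two inequalities. On one hand, by \rref{EtaAttainsURank} applied on the space $W'$, we have $rk_U(\rho) = rk_U(\eta^V_{W',B'}(\tau)) = \dim(W') = n' \leq n = rk_\otimes(\rho)$, giving $rk_U(\rho) \leq rk_\otimes(\rho)$. On the other hand, for the reverse inequality I would argue that $\rho$ cannot appear in a tensor product of fewer than $rk_U(\rho)$ oscillator representations: if $\rho \subseteq \omega_{b_1}[V] \otimes \dots \otimes \omega_{b_m}[V] = Res_{Sp(V)}(\omega[V \otimes W''])$ with $\dim W'' = m$, then again by the stable-range decomposition $\rho = \eta^V_{W''',B'''}(\tau')$ for some $W'''$ of dimension $\leq m$, whence $rk_U(\rho) = \dim(W''') \leq m$ by \rref{EtaAttainsURank}; taking $m = rk_\otimes(\rho)$ minimal yields $rk_U(\rho) \leq rk_\otimes(\rho)$ once more, but applying this with $m$ equal to the minimal tensor rank and noting that the witnessing $W'''$ has $\dim(W''') = rk_U(\rho)$ forces $rk_\otimes(\rho) \leq \dim(W''') = rk_U(\rho)$, since $\rho$ then already appears in the degree-$\dim(W''')$ tensor product corresponding to $(W''',B''')$. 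Combining the two inequalities gives $rk_\otimes(\rho) = rk_U(\rho)$.

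The one point requiring care — and the main obstacle — is the claim that every irreducible summand of a restricted oscillator representation in the symplectic stable range is of the form $\eta^V_{W',B'}(\tau)$ for a \emph{genuine} orthogonal space $W'$ of dimension at most $n$, including the ``degenerate'' terms involving parabolic inductions $\mathrm{Ind}^{P_k^B}(\rho \otimes \epsilon(\mathrm{det}))$; one must check that the constituents of such an induced term, when tensored with $\eta^V(\tau)$, do not secretly have $U$-rank exceeding $\dim(W[-k])$, so that \rref{EtaAttainsURank} still pins down the $U$-rank correctly. This is exactly the content of the ``further, every other $\pi$ \dots has strictly lower $U$-rank'' clause of the Gurevich--Howe statement applied inductively down the filtration by $k$, so I would cite that clause rather than reprove it. Everything else is bookkeeping with the definitions of the two ranks.
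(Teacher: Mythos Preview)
Your proposal is correct and follows the same route the paper intends: the paper states this corollary without proof, treating it as immediate from the Gurevich--Howe results \rref{EtaAttainsURank} together with the stable-range decomposition \rref{TotalHoweIStSymp}, and your argument is precisely an unpacking of that implication. The middle paragraph is more circuitous than necessary---once you know $\rho = \eta^V_{W',B'}(\tau)$ with $\dim W' = rk_U(\rho)$, the inequality $rk_\otimes(\rho) \leq rk_U(\rho)$ follows in one line since $\rho$ then sits inside the degree-$\dim(W')$ tensor product---but the logic is sound (and note the slip ``symplectic $(W,B)$'' where you mean orthogonal).
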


\vspace{3mm}

Therefore, it only remains to prove the following

\begin{proposition}\label{AllUnstableTopProp}
Consider an irreducible representation $\rho$ of a symplectic group $\text{Sp}_{2N} (\F_q)$ obtained
first in the restriction of an oscillator representation to an
unstable reductive dual pair, meaning
$$N < rk_\otimes (\rho) \leq 2N.$$ 
Then $\rho$ attains top $U$-rank
$$rk_U (\rho) = N.$$
\end{proposition}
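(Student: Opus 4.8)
The plan is to reduce, using the results recalled in Subsection~\ref{BackgroundSect} (in particular Corollary~\ref{StableTrivCase} disposes of the case $rk_\otimes(\rho)\le N$), to the case $n:=rk_\otimes(\rho)$ with $N<n\le 2N$, and to prove $rk_U(\rho)\ge N$; the reverse inequality $rk_U(\rho)\le N$ is automatic, since a character of $U_N$ is encoded by a symmetric $N\times N$ matrix. I would argue by induction on $n$, the base case $n=N+1$ and the inductive step being formally identical.

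For the inductive step, let $\rho$ be irreducible of $Sp(V)=Sp_{2N}(\F_q)$ with $rk_\otimes(\rho)=n\ge N+1$. By minimality of $n$, $\rho$ occurs in $Res_{Sp(V)}\omega[V\otimes W]$ for a non-degenerate $(W,B)$ with $\dim W=n$, and, inspecting Theorem~\ref{ExplicitTheoremSymp}, it must occur there as a top-level $\eta$-image, $\rho=\eta^V_{W,B}(\pi)$ for some $\pi\in\widehat{O(W,B)}$: a constituent appearing inside $\eta^V_{W[-k],B[-k]}(\sigma)$ for some $k>0$ would already lie in a tensor product of fewer than $n$ oscillator representations, contradicting $rk_\otimes(\rho)=n$. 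Writing $W=W'\perp\langle b\rangle$ with $B':=B|_{W'}$ non-degenerate (possible since $q$ is odd), the Pieri-type relation of Proposition~\ref{IndEtaProp} exhibits $\rho$ as an irreducible summand of $\tau\otimes\omega^{\pm}_{b}[V]$, where $\tau:=\eta^V_{W',B'}(\pi')$, for a suitable $\pi'\in\widehat{O(W',B')}$, has tensor rank $n-1$. As $n-1\ge N$: if $n-1=N$ then $(V,(W',B'))$ is in the symplectic stable range and $rk_U(\tau)=\dim W'=N$ by \rref{EtaAttainsURank}; if $n-1>N$ then $rk_U(\tau)=N$ by the inductive hypothesis. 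In either case $rk_U(\tau)=N$, so it suffices to show $rk_U(\rho)\ge N$ for this distinguished summand of $\tau\otimes\omega^{\pm}_{b}[V]$.

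The easy half is that in the Weil model of $\omega_b[V]$ on functions on a Lagrangian $L\subseteq V$, the subgroup $U_N$ acts by $f(x)\mapsto\chi_0(\tfrac b2\,x^{\mathrm T}Ax)f(x)$, so $Res_{U_N}\omega^{\pm}_{b}[V]$ involves only characters of rank $\le 1$; since adding a rank-$\le 1$ symmetric matrix changes the rank of a symmetric matrix by at most one, every irreducible summand of $\tau\otimes\omega^{\pm}_{b}[V]$, and $\rho$ in particular, already has $U$-rank $\ge N-1$. To upgrade this to $\ge N$ I would compute $Res_{U_N\times O(W,B)}\omega[V\otimes W]$ directly: realizing $\omega[V\otimes W]$ on functions on the Lagrangian $L\otimes W\subseteq V\otimes W$, the image of $A\in U_N$ acts by $f(y)\mapsto\chi_0(\tfrac12\sum_{i,j}A_{ij}B(y_i,y_j))f(y)$, so the $U_N$-eigencharacter at $y=(y_1,\dots,y_N)$ is governed by the Gram matrix $G(y)=(B(y_i,y_j))_{i,j}$, which has rank $N$ exactly when $y_1,\dots,y_N$ span a non-degenerate $N$-dimensional subspace $Y\subseteq W$ — possible because $\dim W=n>N$. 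By Witt's theorem $O(W,B)$ acts transitively on the ordered tuples with a given rank-$N$ Gram matrix $G_0$, with stabilizer $O(Y^\perp,B|_{Y^\perp})\cong O^{\epsilon}_{n-N}(\F_q)$, so the rank-$N$ part of $Res_{U_N\times O(W,B)}\omega[V\otimes W]$ is $\bigoplus_{G_0}\psi_{G_0}\boxtimes\mathbb{C}\!\left[O(W,B)/O^{\epsilon(G_0)}_{n-N}\right]$. Hence $rk_U(\eta^V_{W,B}(\pi))=N$ precisely when $\pi$ is a constituent of $\mathbb{C}[O(W,B)/O^{+}_{n-N}]$ or $\mathbb{C}[O(W,B)/O^{-}_{n-N}]$, and the content of the inductive step is that the $\pi$ with $\eta^V_{W,B}(\pi)=\rho$ — equivalently, $\pi$ \emph{constructible} in the sense of Subsection~\ref{ExtendedEtaSubsect} — does meet this condition; Proposition~\ref{IndEtaProp} is what reduces this to the single rank-one step from $\tau$ to $\rho$.

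I expect this last point — pinning down that it is the \emph{top} $\eta$-summand $\rho$, rather than some other constituent of $\tau\otimes\omega^{\pm}_{b}[V]$ (or of $\mathbb{C}[O(W,B)/O^{\pm}_{n-N}]$ on the orthogonal side), that carries a maximal-rank character of $U_N$ — to be the main obstacle; it is precisely here that the explicit metastable formula for $\eta^V_{W,B}$ and the characterization of its image as the top-$U$-rank constituent (equation \rref{EtaAttainsURank}, interpolated to the metastable range) do the real work, everything else being bookkeeping with ranks of symmetric matrices and the induction on tensor rank.
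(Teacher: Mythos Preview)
Your proposal has a genuine gap at its central step. You invoke Proposition~\ref{IndEtaProp} to write $\rho=\eta^V_{W,B}(\pi)$ as a summand of $\tau\otimes\omega_b^\pm[V]$ with $\tau=\eta^V_{W',B'}(\pi')$ for $W'=W\ominus\langle b\rangle$; but that proposition does something orthogonal to this. It fixes $(W,B)$ and compares $\eta^V_{W,B}$ with $\eta^U_{W,B}$ for a \emph{larger symplectic} space $U\supseteq V$, giving $\eta^U(\pi)\subseteq Ind^{P^U_{M-N}}(\eta^V(\pi)^\pm)$, a parabolic induction on the symplectic side. It says nothing about shrinking the orthogonal space, so the existence of your $\pi'$ (with $\eta^V_{W',B'}(\pi')\neq 0$ and $\rho\subseteq\tau\otimes\omega_b[V]$) is unjustified. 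Moreover, you yourself identify the remaining step---that the distinguished top summand $\rho$, rather than some other constituent, carries a rank-$N$ character of $U_N$---as the ``main obstacle,'' and you do not prove it; appealing to \rref{EtaAttainsURank} ``interpolated to the metastable range'' is circular, since that is essentially the proposition being proved.

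The paper's argument goes in the opposite direction and is both shorter and complete. Rather than shrinking $W$, it \emph{enlarges} $V$ to $V'$ with $\dim V'=2\dim W$, so that $(Sp(V'),O(W,B))$ is in the stable range and $\rho':=\eta^{V'}(\pi)$ has $rk_U(\rho')=\dim W$ by \rref{EtaAttainsURank}. Proposition~\ref{IndEtaProp}, now used exactly as stated, gives $\rho'\subseteq Ind^{P^{V'}_{\dim W-N}}(\rho^\pm)$. Since this parabolic induction can raise $U$-rank by at most $\dim W-N$, one gets $rk_U(\rho)\geq rk_U(\rho')-(\dim W-N)=N$ in one stroke, with no induction on $n$ and no need to single out which summand carries the top $U_N$-character.
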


\subsection{An induction rule and concluding Theorem \ref{MainMatchRes}}\label{IndSect}

As described in the previous subsection,
to prove Proposition \ref{AllUnstableTopProp}, we need more explicit information about the
symplectic group representations of each tensor rank, which can be obtained from
Theorem \ref{ExplicitTheoremSymp}.
In particular, in the decomposition \rref{EtaThmDecompFull},
we may further restrict to $\text{Sp}(V)$-represenations by treating the 
coefficient $O(W,B)$-representations
as multiplicity spaces, obtaining a classification of the irreducible $\text{Sp}(V)$-represntations
of tensor rank $rk_\otimes = r$ for each $0 \leq r \leq 2N$
as precisely those constructed
in the image of an eta correspondence
$$\eta^V_{W,B} : \widehat{O(W,B)} \rightarrow \widehat{\text{Sp}(V)} \cup \{0\},$$
for one of the two non-equivalent choices of $(W,B)$ with dimension $r$.

The key step we use to conclude Propostion \ref{AllUnstableTopProp} and Theorem \ref{MainMatchRes} is the following
result, which gives a relationship between the different rank layers of the eta correspondence,
according to parabolic induction:

\begin{proposition}\label{IndEtaProp}
Fix a $\F_q$-vector space $W$ with symmetric bilinear form $B$.
Consider symplectic spaces $V$, $U$ of dimension $2N\leq 2M$ respectively, such that
both reductive dual pairs $(\text{Sp}(V), O(W,B))$ and $(\text{Sp}(U), O(W,B))$ are in the symplectic stable
or metastable ranges. Then we may consider the eta correspondences
$$\eta^V : \widehat{O(W,B)} \rightarrow \widehat{\text{Sp}(V)} \cup \{0\}$$
$$\eta^{U} : \widehat{O(W,B)} \rightarrow \widehat{\text{Sp}(U)} \cup \{0\}.$$
For an irreducible representation $\pi \in \widehat{O(W,B)}$ such that
$\eta^V (W) \neq 0$, we have
\beg{NeededParabEta}{\eta^{U} (\pi ) \subseteq Ind_{P_{M-N}^U} (\eta^{V} (\pi)^\pm),}
where the $\pm$ denotes whether we consider a sign character on the factor $GL_{M-N} (\F_q)$
of the Levi subgroup before inflating to $P_{M-N}^U$ and applying the induction.
The sign is $+$ when $W$ is even dimensional and is $-$ when $W$ is odd dimensional.

\end{proposition}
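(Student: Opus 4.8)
The plan is to compare the Lusztig classification data of $\eta^V_{W,B}(\pi)$ and $\eta^U_{W,B}(\pi)$ directly from Definitions \ref{EtaOddCaseDefn} and \ref{EtaEvenCaseDefn}, and then recognize the difference between them as exactly one step of Harish--Chandra induction from the maximal parabolic $P^U_{M-N}$. Write $d := M - N$. Reading off the two data sets, in either parity of $\dim W$ the $H^D$-unipotent part, the ``small'' factors of the centralizer of the output semisimple part, and the central sign data of $\eta^U_{W,B}(\pi)$ all coincide with those of $\eta^V_{W,B}(\pi)$; the only changes are that (i) the single ``orthogonal-type'' factor of the centralizer grows in rank by $d$, because $\phi^\pm(s)$ (resp.\ $\phi(s)$) acquires $d$ more blocks of eigenvalue $-1$ when $\dim W$ is odd (resp.\ $+1$ when $\dim W$ is even), and (ii) on that factor the appended coordinate $N'_\pi$ in the symbol is replaced by $N'_\pi + d$ (indeed $N'$ at level $M$ is $N'$ at level $N$ plus $d$). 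In particular, since enlarging the appended coordinate cannot destroy legality, $\eta^V_{W,B}(\pi)\neq 0$ forces $\eta^U_{W,B}(\pi)\neq 0$, so the right side of \rref{NeededParabEta} is non-zero and it suffices to exhibit $\eta^U_{W,B}(\pi)$ as one constituent.

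Next I would identify the Levi of $P^U_{M-N}$ with $GL_d(\F_q)\times Sp(V)$ and recall that the character of $GL_d(\F_q)$ inflated in \rref{NeededParabEta} is $\epsilon^\pm(\det)$ with $\epsilon^+=\mathbf 1$ and $\epsilon^-=\epsilon\circ\det$ the quadratic character; under Jordan decomposition, a character $\theta\circ\det$ of $GL_d(\F_q)$ has semisimple parameter the scalar matrix $\widehat\theta\,I_d$, so the trivial character has parameter $(1,\dots,1)$ and the quadratic character has parameter $(-1,\dots,-1)$. This is precisely the asserted matching: $\dim W$ even (sign $+$) adds $d$ eigenvalues $1$ and $\dim W$ odd (sign $-$) adds $d$ eigenvalues $-1$, as in (i) above. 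Harish--Chandra induction $R^{Sp(U)}_{GL_d\times Sp(V)}$ carries the Lusztig series attached to the pair (parameter of $\epsilon^\pm$, $s')$ into the Lusztig series of $s'':=s'\oplus(\pm1)^{\oplus d}$, and, by compatibility of Jordan decomposition with $R$ (Fong--Srinivasan; see \cite{Carter}), on the unipotent side it is governed by $R^{Z_{G^D}(s'')^D}_{GL_d\times Z_{G^D}(s')^D}(\mathbf 1\boxtimes -)$ on the enlarged centralizer. In symbol combinatorics this is the generalized Pieri rule: inducing $\mathbf 1_{GL_d}\boxtimes\rho_\theta$ contains $\rho_{\theta'}$ for every $\theta'$ obtained from $\theta$ by adding a horizontal $d$-strip to the $\beta$-set of the affected row; in particular it contains the symbol obtained by raising the largest entry of that row by $d$ — equivalently, the symbol with $N'_\pi$ replaced by $N'_\pi+d$, which is legal by the previous step. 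Carried by the unchanged remaining data, this constituent is exactly $\eta^U_{W,B}(\pi)$.

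The step I expect to be the main obstacle — and would write out carefully — is the bookkeeping of central sign data through this induction for the disconnected-center groups $Sp$ and $O$. The central element $-I$ of $Sp(U)$ restricts to $(-I_{GL_d},-I_{Sp(V)})$, and $\epsilon^\pm(\det)(-I_{GL_d})$ contributes a factor $\epsilon(-1)^d$ or $1$; one must verify (using that the affected factor always has $\pm1$ eigenvalues, so the split-type central sign is genuinely present) that the constituent of the induced representation carrying the target symbol has central sign $\epsilon(s)\cdot\mathrm{disc}(B)$, resp.\ the inherited sign, matching $\eta^U_{W,B}(\pi)$ — and similarly that the $O(W,B)$-action is compatible with the abbreviated notation $\eta^V_{W,B}(\pi)^\pm$. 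The other routine but non-trivial check is that the symbol-level Pieri statement applies in exactly the normalization fixed in Section \ref{StatementSection} (the row-parity conventions $a-b\equiv 1\bmod 4$, etc., and the $B$/$C$ versus $D$/${}^2D$ distinction). As an alternative that sidesteps much of this, one may run the same comparison at the interpolated correspondence $\eta^t_{W,B}$ inside $\overline{\mathfrak{Rep}}(Sp_{2t}(\F_q))$, where $\mathrm{Ind}^{P_d}$ also interpolates and the Pieri rule becomes the uniform ``append a coordinate'' move on formal interpolated Lusztig symbols; specializing at $t=N$ via the semisimplification dictionary of Section \ref{InterpolationSection}, the needed constituent survives precisely because $\eta^V_{W,B}(\pi)\neq 0$ guarantees the relevant formal symbol is legal at $t=N$.
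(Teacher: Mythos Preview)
Your proposal is correct and follows essentially the same approach as the paper: reduce to comparing Lusztig classification data, observe that the sign choice in \rref{NeededParabEta} makes the semisimple parts agree (adding $-1$'s for $\dim W$ odd, $+1$'s for $\dim W$ even), and then invoke the symbol Pieri rule on the single altered factor to see that the symbol with appended coordinate $M'_\pi = N'_\pi + d$ appears in the induction of the symbol with appended coordinate $N'_\pi$ (this is exactly the case \rref{AddToTheFinalCoordPieri} of adding $d$ to the final entry of a row). The paper's own proof is considerably terser than yours and in particular does not spell out the central sign bookkeeping or the Fong--Srinivasan compatibility that you flag; your instinct to check those carefully is sound, but the underlying argument is the same.
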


To prove this, we now recall briefly the analogue of the Pieri rule for
symbols. We give a more concrete statement in Subsection \ref{PieriRuleSubsection}
below with a more detailed explanation on how it can be derived from the results
of \cite{LusztigSymbols}. For the purposes of proving Theorem \ref{MainMatchRes}, we only
need one case of it, so we do not give the full general statement in this subsection.

Consider a unipotent representation of $\text{Sp}_{2N}(\F_q)$ corresponding to a symbol
${\lambda_1< \dots < \lambda_a \choose \mu_1< \dots < \mu_b}$.
Write $P_{1}$ for the maximal parabolic subgroup of $\text{Sp}_{2(N+1)} (\F_q)$ with
Levi factor $\text{Sp}_{2N} (\F_q) \times GL_1 (\F_q)$, and consider ${\lambda_1< \dots < \lambda_a
\choose \mu_1< \dots < \mu_b}$ as a
its representation by letting the $GL_1 (\F_q)$ factor of the Levi
subgroup act trivially and
inflating on the unipotent radical trivially. Then its parabolic induction
to a $\text{Sp}_{2(N+1)} (\F_q)$-representation
$Ind_{P_1}^{\text{Sp}_{2(N+1)} (\F_q)} ({\lambda_1< \dots < \lambda_a \choose \mu_1< \dots < \mu_b})$
is a direct sum of unipotent representations corresponding to symbols
\beg{NoRowChange}{\begin{array}{c}
\displaystyle {\lambda_1< \dots < \lambda_{i-1} < \lambda_i+1< \lambda_{i+1} < \dots < \lambda_a
\choose \mu_1< \dots < \mu_b},\vspace{1mm}\\
\displaystyle { \lambda_1< \dots < \lambda_a \choose
\mu_1< \dots < \mu_{i-1} < \mu_i+1< \mu_{j+1} < \dots < \mu_a}
\end{array}}
when possible, i.e. for $1 \leq i \leq a$ or $1 \leq j \leq b$ where $\lambda_i +1 < \lambda_{i+1}$ or $\mu_j+1 < \mu_{j+1}$, respectively, and the unipotent representations
\beg{}{\begin{array}{c}
\displaystyle { 1 < \lambda_1+1 < \lambda_2 + 1< \dots < \lambda_a +1 \choose
0< \mu_1 +1 < \dots < \mu_b +1}\vspace{1mm}\\
\displaystyle {0< \lambda_1+1 <  \dots < \lambda_a +1 \choose 1 < \mu_1 +1 < \mu_2 +1 < \dots < \mu_b+1}
\end{array}}
when possible, i.e. when $\lambda_1 > 0$ or $\mu_1 > 0$, respectively.
This is a full description of the ``one step" Pieri rule.

More generally, for the ``$r$ step" Pieri rules, describing
the parabolic induction from a maximal parabolic $P_{r}$
with Levi subgroup $\text{Sp}_{2N} (\F_q) \times GL_r (\F_q)$ to $\text{Sp}_{2(N+r)} (\F_q)$
(still taking $GL_r (\F_q)$ and the unipotent
radical to act trivially on the input representation),
instead of adding a single ``box" to the underlying Young diagrams corresponding to
a symbol ${\lambda_1< \dots < \lambda_a \choose \mu_1< \dots < \mu_b}$, we must add a
``row of $r$ boxes."
More specifically, one must undo
the procedures described in Proposition 3.2 and Subsection 4.6 of \cite{LusztigSymbols},
then apply the classical Pieri rule adding a ``row of
$r$ boxes" as a Weyl group representation,
before re-applying the procedures of \cite{LusztigSymbols} to recover the original defect and the
new rank $N+r$.
This rule can be derived directly from the definition of the symbols (see \cite{LusztigSymbols},
Subsection 4.8). 

In particular, summands that always
appear in $Ind_{P_r} ({\lambda_1< \dots < \lambda_a\choose
\mu_1< \dots < \mu_b})$ are symbols obtained by adding $r$ to the final coordinate in a row:
\beg{AddToTheFinalCoordPieri}{\begin{array}{c}
\displaystyle {\lambda_1< \dots <\lambda_{a-1}< \lambda_a + r \choose \mu_1< \dots < \mu_b}\vspace{1mm}\\
\displaystyle {\lambda_1< \dots < \lambda_a \choose \mu_1< \dots < \mu_{b-1} < \mu_b +r}.
\end{array}
}

\vspace{2mm}

To apply such
a parabolic induction $Ind_{P_r}$ to a general representation $\rho$ of $\text{Sp}_{2N}(\F_q)$,
the resulting $\text{Sp}_{2(N+r)} (\F_q)$ representation
consists of summands which add $1$'s to the semisimple part of $\rho$'s classification data
and have unipotent part consisting of the input unipotent part with the factor corresponding
to the 
identity component of the centralizer of $1$ eigenvalues
replaced by the possible pieces of its $r$ step parabolic induction.
We also consider the ``signed parabolic induction" $Ind_{P_r} (\rho^-)$, by which we denote the
$\text{Sp}_{2(N+r)} (\F_q)$-representation obtained by tensoring
$\rho$ with the sign character of the $GL_r (\F_q)$ factor of the Levi subgroup
of $P_r$ before inflating and inducing. The procedure on classification data giving
the signed parabolic induction is completely similar to the unsigned case, except that $-1$'s
are added to the semisimple part of the data
corresponding to the input representation (instead of $1$'s) and
the symbol corresponding to this factor of the 
identity component of its centralizer is altered, instead.

In particular, by combining the symbol Pieri rule with our description of the eta
correspondence given in Definitions \ref{EtaOddCaseDefn} and \ref{EtaEvenCaseDefn}
we are able to conclude Proposition \ref{IndEtaProp}:

\begin{proof}[Proof of Proposition \ref{IndEtaProp}]
The choice of sign in \rref{NeededParabEta} precisely specifies whether the induction operation
will add $1$'s or $-1$'s to the classification data of the input representation.
Since it is chosen according to the partiy of $dim (W)$,
the semisimple part of the classification data of $\eta^U (\pi)$ agrees with
that of the irreducible summands of $Ind_{P^U_{M-n}} (\eta^V (\pi)^\pm )$,
reducing the claim to the fact that in the ``altered factor" of the unipotent parts of
$\eta^U (\rho)$ and $\eta^V (\rho)$, 
$${\lambda_1< \dots < \lambda_a < M'_\rho
\choose \mu_1< \dots < \mu_b} \subseteq Ind_{P_{M-N}} ({\lambda_1< \dots < \lambda_a < N'_\rho
\choose \mu_1< \dots < \mu_b} ),$$
which follows from the symbol Pieri rule.
\end{proof}

Considering the effect of parabolic induction on $U$-rank then allows us
to reduce Proposition \ref{AllUnstableTopProp} to Corollary \ref{StableTrivCase}, and conclude Theorem \ref{MainMatchRes}:

\begin{proof}[Proof of Proposition \ref{AllUnstableTopProp} and Theorem \ref{MainMatchRes}]
Consider a representation $\rho$ of $\text{Sp} (V)$ of tensor rank
$$N< rk_\otimes (\rho) \leq 2N.$$
Then by Theorem \ref{ExplicitTheoremSymp},
there exsits a choice of $(W,B)$ with $dim (W) = rk_\otimes (\rho)>N$,
and an irreducible representation
$\pi \in \widehat{O(W,B)}$ such that
$\eta^V_{W,B} (\pi) = \rho $.

Let us denote by $V'$ the symplectic space of dimension $dim (V') = 2 \cdot dim (W)$, i.e.
the maximal dimensional symplectic space such that $(\text{Sp}(V'), O(W,B))$ is a reducive dual pair
in the symplectic stable range. Consider the eta correpsondence
$$\eta^{V'} : \widehat{O(W,B)} \hookrightarrow \widehat{\text{Sp}(V')},$$
and its image of $\pi$. Let us write $\rho' = \eta^{V'} (\pi)  \in \widehat{\text{Sp}(V')}$.
Applying Corollary \ref{StableTrivCase}, we know that as a
representation of $\text{Sp}(V') = \text{Sp}_{2 \cdot dim (W)} (\F_q)$, its $U$-rank is
$rk_U (\rho' ) = rk_\otimes (\rho ') =  dim (W)$.

Now applying Proposition \ref{IndEtaProp} gives that $\rho '$ appears as a summand
of a (possibly signed) parabolic induction of $\rho $ from a parabolic subgroup with Levi factor
$\text{Sp} (V) \times GL_{dim (W)-N} (\F_q) \subseteq \text{Sp}(V')$,
which is an operation that can only increase $U$-rank by at most the difference $dim (W) -N$.
In other words, the $U$-rank of $\rho $ is at least
$$\begin{array}{c}
rk_U (\rho) \geq rk_U (\rho ') - ( dim (W) -N) =\\[1ex]
 rk_\otimes (\rho') - (dim (W)- N) = N,
\end{array}$$
and therefore we must have equality $rk_U (\rho) = N$, obtaining \rref{NeededMatchingThm}.
\end{proof}

\section{Resolving the alternating sums}\label{AlternatingSumSection}

In Theorem \ref{ExplicitTheoremSymp} and \ref{ExplicitTheoremOrtho},
we decompose the restricted oscillator representation
$Res_{\text{Sp}(V) \times O(W,B)} (\omega [V\otimes W])$ in terms of
the eta correspondence (in the case of the symplectic stable
or metastable range, corresponding to the condition \rref{EtaCorrStMetaStRange})
or the zeta correspondence (in the orthogonal
metastable or stable ranges, corresponding to the complementary condition
\rref{ZetaCorrStMetaStRange}).
Further, we described the eta and zeta correspondences
in terms of classification data, allowing us to directly compute
the $\text{Sp}(V)$-representation and $O(W,B)$-representation summands occuring in the restriction
of $\omega [V\otimes W]$.

However, in \rref{EtaThmDecompFull} and \rref{ZetaThmDecompFull}, the eta and zeta correspondence terms appear with coefficients
$\mathcal{A}_k (\rho, N_\rho')$ (giving an
$O(W,B)$-representation, for $\rho$ an irreducible $O(W[-k], B[-k])$-representation)
or $\mathcal{i} (\rho, m_\rho')$ (giving a $\text{Sp}(V)$-representations, for
$\rho$ an irreducible $\text{Sp}(V[-k])$, respectively.  
Our description of these terms for the purpose of proving the Theorem
was as certain alternating sums of parabolic inductions.
The purpose of this section is to simplify these sums
$\mathcal{A}_k(\rho , N_\rho' )$ (resp. $\mathcal{A}_i (\rho , m_\rho')$)
and describe their irreducible $O(W,B)$- (resp. $\text{Sp}(V)$-) representation summands
in a way that can be used for concrete computations.

\subsection{The main statement}

Our main result is

\begin{theorem}\label{AlternatingSumIntroPropStatement}
Fix a reductive dual pair $(\text{Sp}(V), O(W,B))$ in the symplectic stable 
or metastable range. 
For an irreducible representation $\rho$ of $O(W[-k], B[-k])$, consider the factor of
the unipotent part of its classification data writable as a symbol
\beg{PropMainUnipAlt}{{\lambda_1< \dots < \lambda_a \choose \mu_1< \dots < \mu_b},}
such that it is replaced by a symbol 
$${\lambda_1< \dots < \lambda_a < N_\rho' \choose \mu_1< \dots < \mu_b}$$
in the construction of $\eta^V_{W[-k], B[-k]} (\rho)$ (switch rows if necessary).
Then the $O(W,B)$-representation $\mathcal{A}_{k} (\rho , N_\rho')$ consists of the irreducible summands
appearing in the parabolic induction $Ind_{P_k} (\rho \otimes \epsilon (det))$ such that, when performing
the Pieri rule (see Proposition \ref{PieriRuleProp}) on the row
$\lambda_1< \dots < \lambda_a$ of \rref{PropMainUnipAlt},
the highest coordinate $\lambda_{a'}'$ of the corresponding row of the new symbol
satisfies
$$\lambda_{a'}' < N_\rho' + (a'-a).$$
There is a similar description in the case of $(\text{Sp}(V), O(W,B))$ in the orthogonal stable 
or metastable range, of the $\text{Sp}(V)$-representations $\mathcal{A}_i (\rho, m_\rho')$
for $\text{Sp}(V[-i])$-representations
$\rho$.
\end{theorem}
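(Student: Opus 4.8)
The plan is to reduce the statement to a purely combinatorial identity about symbols and the symbol Pieri rule, and then to establish that identity by an explicit sign cancellation among the terms of the defining alternating sum.

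First I would unwind the definitions. By Definition~\ref{SympCaseAltSumsDefn}, the representation $\mathcal{A}_k(\rho, N'_\rho)$ is obtained from $\rho$ by retaining the semisimple part of its Lusztig data (enlarged by $-I_{2k}$ or $I_{2k}$ according to the parity of $\dim W$), retaining the tensor factor $u_{H^D}$ of the unipotent part, and replacing the single altered symbol factor $\theta = {\lambda_1 < \cdots < \lambda_a \choose \mu_1 < \cdots < \mu_b}$ by the (a priori virtual) collection of symbols occurring in $A_k^\pm(\theta, N'_\rho)$. On the other hand, $\text{Ind}^{P_k}(\rho \otimes \epsilon(\text{det}))$ adds to the semisimple part exactly the $\pm 1$ eigenvalues used in the definition of $\mathcal{A}_k$ (this is the meaning of the sign twist) and applies the $k$-step symbol Pieri rule of \cite{LusztigSymbols} (see Proposition~\ref{PieriRuleProp}) to the altered symbol factor, while leaving $u_{H^D}$ untouched. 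After this routine matching of semisimple parts and $u_{H^D}$-factors, exactly as in \cite{TotalHoweII}, the theorem becomes the identity of symbol-multisets
$$A_k^+(\theta, N') \;=\; \bigoplus_{\theta'} \theta',$$
the sum running over those $\theta' = {\lambda'_1 < \cdots < \lambda'_{a'} \choose \mu'_1 < \cdots < \mu'_{b'}}$ appearing in $\text{Ind}^{P_k}(\theta)$ with $\lambda'_{a'} < N' + (a'-a)$, together with the analogous statement for $A_k^-(\theta, N')$ with the two rows interchanged.

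Next I would expand both sides via Pieri. Each term $\text{Ind}^{P_{k-(N'-\lambda_q)}}(\theta^+(N')_q)$ of $A_k^+(\theta, N')$ is a sum of symbols governed by the $\bigl(k-(N'-\lambda_q)\bigr)$-step Pieri rule applied to $\theta^+(N')_q$, a symbol that differs from $\theta$ only in that $\lambda_q$ has been deleted from the top row and $N'$ appended to it. Passing to the ``unfolded'' description of the Pieri rule from \cite{LusztigSymbols} (Proposition~3.2 and Subsections~4.6 and 4.8), where $\text{Ind}^{P_r}$ becomes the classical rule of adding a horizontal $r$-strip to an associated partition and the operations ``append $N'$'' and ``delete $\lambda_q$'' become an explicit shift-and-delete, I would fix a target symbol $\theta'$ occurring in $\text{Ind}^{P_k}(\theta)$ and compute its coefficient in $A_k^+(\theta, N')$, namely $\sum_{q=c}^{a+1}(-1)^{a+1-q} m_q(\theta')$, where $m_q(\theta')$ is the multiplicity of $\theta'$ in $\text{Ind}^{P_{k-(N'-\lambda_q)}}(\theta^+(N')_q)$.

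The hard part will be showing that this alternating sum equals $1$ precisely when $\theta'$ satisfies $\lambda'_{a'} < N' + (a'-a)$ and equals $0$ otherwise. The mechanism I expect is a sign-reversing involution on the pairs $(q, P)$, with $P$ a Pieri chain from $\theta^+(N')_q$ to $\theta'$: when $\theta'$ overshoots the bound, moving the offending top coordinate of $\theta'$ past $N'$ toggles $q$ by one and hence the sign $(-1)^{a+1-q}$, so all such contributions cancel in pairs; when $\theta'$ meets the bound, exactly one value of $q$ survives, namely the index $c$ determined by $N' - \lambda_c \le k < N' - \lambda_{c+1}$, contributing with sign $+1$ and multiplicity one. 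The delicate point, and the main obstacle, is the bookkeeping: appending $N'$ and deleting a coordinate changes the defect of a symbol by $\pm 2$ and reshuffles which coordinates are ``active,'' so matching Pieri summands of $\theta^+(N')_q$ against those of $\theta$ requires care with the normalizations of \cite{LusztigSymbols}. Here the range hypothesis (symplectic stable or metastable) enters: it guarantees $N'_\rho > 0$, hence each $\theta^+(N')_q$ is a legal symbol and every term of $A_k^\pm(\theta, N'_\rho)$ is an honest parabolic induction, which is also exactly why $\mathcal{A}_k(\rho, N'_\rho)$ is a genuine rather than virtual representation. Finally, the orthogonal case follows verbatim after replacing $\eta$, $N'_\rho$, $O(W,B)$, $O(W[-k], B[-k])$ by $\zeta$, $m'_\rho$, $Sp(V)$, $Sp(V[-i])$ and using the row-interchange convention of Subsection~\ref{AlternatingSumSubsect}; the combinatorial identity above is symmetric under swapping the two rows of the symbol, so no new argument is needed.
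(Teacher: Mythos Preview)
Your approach is essentially the paper's: reduce to a purely combinatorial identity on the single altered symbol factor, translate symbols to pairs of Young diagrams via Lusztig's bijection (Proposition~3.2 and \S4.6--4.8 of \cite{LusztigSymbols}), and then cancel. Two small corrections, though. First, the symbols $\theta^+(N')_q$ have the \emph{same} row lengths as $\theta$ (one coordinate removed from and one appended to the same row), so the defect does not change; this is in fact what makes the Young-diagram translation clean. Second, the surviving index for a $\theta'$ satisfying the bound is $q=a+1$ (the term $\text{Ind}^{P_k}(\theta)$ itself, with sign $(-1)^0=+1$), not $q=c$; a $\theta'$ with $\lambda'_{a'}<N'+(a'-a)$ cannot occur in any $\text{Ind}^{P_{k-(N'-\lambda_q)}}(\theta^+(N')_q)$ for $q\le a$ because those top Young diagrams all have first row of length exactly $N'_\rho-a$.

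The paper organizes the cancellation as a telescope rather than a single sign-reversing involution: terms of $\text{Ind}^{P_k}(\theta)$ failing the bound are cancelled by the $q=a$ term; the residue of the $q=a$ term (those $\theta'$ with $(a-1)$th row too long to have come from $q=a+1$) is cancelled by $q=a-1$; and so on down to $q=c$. The complementary row-length inequalities $\alpha'_{a-i+2}\le\alpha_{a-i+1}$ versus $\alpha'_{a-i+2}\ge\alpha_{a-i+1}+1$ make each step exact. The metastable range hypothesis is then used exactly where you say, at the bottom $q=c$, to show $k^{(c)}$ is too small to produce any uncancelled leftover. Your involution, once the index slip is fixed, would encode the same pairing.
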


To prove this, we use the Pieri
rule for Lusztig symbols, which we state in Proposition \ref{PieriRuleProp} below. 
Recall that the combinatorial
data of a symbol (classifying the irreducible unipotent representations)
is equivalent to the data of its defect (which gives the information
of the underlying cuspidal representation) and a pair of Young diagrams corresponding
to the irreducible representation of the remaining Weyl group specifiying which piece of the induced
cuspidal representation the symbol corresponds to (this data comes from undoing the
procedures given in Proposition 3.2 and Subsection 4.6 or 4.7 of \cite{LusztigSymbols}; we give more details
below).

\vspace{1mm}

First, as we discussed in Subsection \ref{AlternatingSumSubsect},
in each case, in every summand
of the alternating sums of parabolic inductions all factor through precisely to the symbol altered by
the eta or zeta correspondence
(the symbol of the
factor of the unipotent part corresponding to $(-1)^{dim (W)}$ eigenvalues),
with the other classification data being preserved.
Therefore, the problem of simplifying $\mathcal{A}_k (\rho, N_\rho')$
(resp. $\mathcal{A}_k (\rho, m_\rho')$)
reduces to simplifying an alternating sum of parabolic inductions of symbols.
As in Definitions \ref{SympCaseAltSumsDefn} and \ref{OrthoCaseAltSumsDefn},
write $\theta = {\lambda_1< \dots < \lambda_a \choose
\mu_1< \dots < \mu_b}$ for the symbol corresponding to the factor
of the unipotent part of $\rho$'s classification data which is altered in the
extended eta correspondence
(resp. the extended zeta correspondence). Arrange
the rows so that the eta correspondence alters the top row.
Write
\beg{}{k^{(i)} = k-N_\rho' + \lambda_i ,}
so that we have $k^{(a)} \geq k^{(a-1)}  \geq \dots \geq k^{(1)}$.
Pick the minimal $i_0$ such that $k^{(i_0)}\geq 0$
We then need to
find the symbols $\chi$
appearing in the alternating sum $A_k^\pm (\theta, N_\rho')$
(with superscript sign agreeing with the sign of $\phi^\pm (u)$ or $\psi^\pm (u)$
appearing in the the construction of $\eta^V_{W,B} (\rho)$ or $\zeta^{W,B}_V (\rho)$),
which can be written out as
\beg{AlternatingSumOfSymbols}{
\bigoplus_{i=i_0}^{a+1} (-1)^{a+1-i}\cdot Ind_{P_{k^{(i)}}} (\text{${\lambda_1< \dots < \widehat{\lambda_i} < \dots < \lambda_a < N_\rho' \choose \mu_1< \dots <\mu_b}$})
}
(for the case of the extended zeta correspondence, replace $N_\rho' $ by $m_\rho '$).
The irreducible components of $\mathcal{A}_k  (\rho , N_\rho' )$ will then consist of
of $O(W,B)$-representations obtained with classification data obtained by adding $(-I)_{2k}$
to $\rho$'s original semisimple part, and taking the unipotent part obtained by tensoring a symbol
in \rref{AlternatingSumOfSymbols} (and choosing the same central sign data as $\rho$).

In other words, Theorem \ref{AlternatingSumIntroPropStatement} can be more precisely stated as
\begin{theorem}\label{MorePreciseAlternatingSumProp}
Assume the above notation. The alternating sum 
\rref{AlternatingSumOfSymbols} simplifies as the sum of all symbols
$${\lambda_1 '< \dots< \lambda_{a'}' \choose \mu_1< \dots< \mu_{b'}'}\subseteq Ind_{P_k} ({\lambda_1< \dots < \lambda_a \choose \mu_1< \dots < \mu_b})$$
such that
$$\lambda_{a'} < N_\rho' + (a'-a).$$
\end{theorem}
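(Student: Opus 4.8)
The plan is to reduce the statement to a purely combinatorial identity about alternating sums of horizontal strips adjoined to a pair of $\beta$-sets, and then to establish that identity by the recursive cancellation already foreshadowed in Subsection~\ref{IntEtaProofSubsect}. First I would reduce Theorem~\ref{AlternatingSumIntroPropStatement} to Theorem~\ref{MorePreciseAlternatingSumProp}: as recalled in Subsections~\ref{AlternatingSumSubsect} and~\ref{IntEtaProofSubsect}, parabolic induction of a representation given by Lusztig classification data preserves the semisimple part (only adjoining $-1$'s, resp.\ $1$'s, according to the parity of $\dim(W)$) and preserves every tensor factor of the unipotent part except the one symbol $\theta$ altered in the construction of $\eta^V_{W,B}$ (resp.\ $\zeta^{W,B}_V$), on which it acts by the symbol Pieri rule. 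Since $\mathcal{A}_k(\rho,N_\rho')$ and $\mathcal{A}_k(\rho,m_\rho')$ are built by keeping all of $\rho$'s data fixed and replacing $\theta$ by the symbols occurring in $A_k^\pm(\theta,N_\rho')$, resp.\ $A_k^\pm(\theta,m_\rho')$, the claim reduces to simplifying the alternating sum of parabolic inductions of symbols~\rref{AlternatingSumOfSymbols}, with the bottom row of $\theta$ and all remaining Lusztig data riding along as spectators. I treat the symplectic/eta case; the orthogonal/zeta case is identical with $N_\rho'$ replaced by $m_\rho'$.

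Next I would pass to $\beta$-numbers. Using the symbol Pieri rule (Proposition~\ref{PieriRuleProp}) — undoing Lusztig's defect-preserving procedures to present a symbol of the fixed defect as a pair of partitions, for which $\mathrm{Ind}^{P_r}$ with trivial $GL_r$-action adjoins a horizontal strip of total size $r$ distributed between the two rows, each resulting symbol with multiplicity one — I write every symbol in sight with top $\beta$-set of a large fixed size $A$ and fixed bottom $\beta$-set, and invoke the standard $\beta$-number criterion: $\{c_1<\dots<c_A\}/\{d_1<\dots<d_A\}$ is a horizontal strip iff $d_j\le c_j<d_{j+1}$ for all $j$ (with $d_{A+1}=+\infty$). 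The symbol $\theta^+(N_\rho')_i$ differs from $\theta$ only by replacing the $i$-th top $\beta$-number by the large value corresponding to $N_\rho'$, and the budget $k^{(i)}=k-(N_\rho'-\lambda_i)$ is chosen precisely so that $\mathrm{Ind}^{P_{k^{(i)}}}(\theta^+(N_\rho')_i)$ sits at the correct rank; hence for a target symbol $\chi$ with top $\beta$-set $\Lambda'$ and bottom $\beta$-set $M'$, its multiplicity in~\rref{AlternatingSumOfSymbols} is $[\,M'/M\text{ a horizontal strip}\,]$ times $\sum_{i}(-1)^{a+1-i}[\,\Lambda'/\Lambda_i\text{ a horizontal strip}\,]$, where $\Lambda_i$ is the top $\beta$-set of $\theta^+(N_\rho')_i$ and the sum may be taken over all $1\le i\le a+1$ (terms with $k^{(i)}<0$ vanish automatically, a horizontal strip of negative size being impossible).

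The core is then the evaluation of $\sum_{i}(-1)^{a+1-i}[\,\Lambda'/\Lambda_i\text{ a horizontal strip}\,]$. If $\max\Lambda'$ lies below the $\beta$-value corresponding to $N_\rho'$ — which, after accounting for the normalization of the $\beta$-set sizes, is exactly the condition $\lambda_{a'}'<N_\rho'+(a'-a)$ — then for $i\le a$ the top $\beta$-number of $\Lambda_i$ already exceeds $\max\Lambda'$, so $\Lambda'/\Lambda_i$ cannot be a horizontal strip, and only $i=a+1$ (where $\Lambda_{a+1}=\Lambda$, giving the untruncated $\mathrm{Ind}^{P_k}(\theta)$) survives, contributing $1$ precisely when $\chi\subseteq\mathrm{Ind}^{P_k}(\theta)$; this is the asserted description. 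If instead $\max\Lambda'$ meets or exceeds that value (the "bad", would-be-illegal symbols), I would show the alternating sum vanishes by the recursive cancellation indicated around~\rref{IllegalSymbolOriginal}: arguing by induction on the number $a$ of coordinates of $\theta$'s altered row (equivalently on $k$), the leading term $i=a+1$ is cancelled by the $i=a$ term up to a remainder which is again an alternating sum of the same shape, but for a symbol with one fewer coordinate and a strictly smaller budget, to which the inductive hypothesis applies.

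I expect the main obstacle to be exactly this bad-case vanishing. One must keep careful track of the $\beta$-set normalization — the symbol Pieri rule can \emph{increase} the number of rows, so $a'$ need not equal $a$ — verify that the remainder terms produced by the recursion are genuinely of the form appearing in~\rref{AlternateSymbolInd} with the parameters $k^{(i)},s^{(i)},u^{(i)}$ shifted as in Subsection~\ref{IntEtaProofSubsect}, and handle the boundary index $i=a+1$, where $\Lambda_{a+1}$ lacks the large coordinate present for $i\le a$, separately. Once the identity is proved, reading it back through the reduction of the first paragraph shows that $\mathcal{A}_k(\rho,N_\rho')$ is exactly the sum of the irreducible summands of $\mathrm{Ind}^{P_k}(\rho\otimes\epsilon(\det))$ whose altered symbol satisfies $\lambda_{a'}'<N_\rho'+(a'-a)$, which is Theorems~\ref{AlternatingSumIntroPropStatement} and~\ref{MorePreciseAlternatingSumProp}; the orthogonal case is word-for-word the same. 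As a cross-check, the resulting decomposition can be confronted with the global dimension formula computed in \cite{TotalHoweI, TotalHoweII}, exactly as for Theorems~\ref{ExplicitTheoremSymp} and~\ref{ExplicitTheoremOrtho}.
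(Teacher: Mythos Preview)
Your proposal is correct and follows essentially the same route as the paper: reduce to the altered symbol factor, translate to partitions/$\beta$-sets via Lusztig's procedure, invoke the symbol Pieri rule as a horizontal-strip condition, observe that targets satisfying $\lambda'_{a'}<N_\rho'+(a'-a)$ can only arise from the $i=a+1$ term, and cancel the remaining ``bad'' targets by a telescoping argument across consecutive $i$. The paper organizes the bad-case cancellation slightly differently---as a step-by-step telescope (terms not used to cancel the $(i{+}1)$th step satisfy the complementary row-length condition and hence appear in the $(i{-}1)$th step, with the metastable bound $k<N_\rho'$ forcing the final step $i=i_0$ to leave no leftovers)---rather than as an induction on $a$, but the underlying combinatorics is identical, and your identification of the boundary index $i=a+1$ and the $a'\neq a$ normalization issue as the points requiring care matches the paper's emphasis.
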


This statement can now be proved directly by considering the induction rule on symbols,
which can be done by translating the symbols into Young diagram data.

Consider a symbol of type $B$, $C$, $D$ or ${}^2 D$-type
\beg{GenericSymbol}{{\lambda_1< \dots < \lambda_a \choose \mu_1< \dots < \mu_b}.}
We want to consider the ``$k$-step induction"
\beg{GenerickIndOfSymbol}{Ind_{P_k} ({\lambda_1< \dots < \lambda_a \choose \mu_1< \dots < \mu_b}),}
where $P_k$ denotes the standard maximal parabolic $P_k^{W,B}$ if we take
\rref{GenericSymbol} to correspond to a unipotent representation of $O(W[-k], B[-k])$ for some orthogonal
space $(W,B)$,
or $P_k^{V}$ if we take \rref{GenericSymbol} to correspond to a unipotent representation of $\text{Sp}(V[-k])$
for some symplectic space $V$. To consider a symbol \rref{GenericSymbol} as a representation of
$P_k$ in either of these cases, let the factor $GL_k (\F_q)$ of the Levi subgroup of $P_k$ act trivially,
and inflate by letting. Then \rref{GenericSymbol} denotes the induction of the resulting
representation to $O(W,B)$ or $\text{Sp}(V)$.
The decomposition of \rref{GenerickIndOfSymbol} is according to a {\em Pieri rule}, which
we concretely state in Proposition \ref{PieriRuleProp} below.

\subsection{The Pieri rule}\label{PieriRuleSubsection}

First, let us briefly recall the role of symbols as representations. In (4.6.2), (4.7.1) of \cite{LusztigSymbols},
Lusztig described how the combinatorial data of a pair of increasing sequences \rref{GenericSymbol}
corresponds to an irreducible representation of a certain Hecke algebra $\mathscr{H}_n (q, q^d)$
defined according to certain relations (see Subection 4.1 of \cite{LusztigSymbols}) which
are equivalent to the classical Iwahori relations and recover the group algebra of the Weyl
group (see \cite{CurtisReiner}, $\mathsection$68A).
In Subsection 4.8 of \cite{LusztigSymbols},
Lusztig also describes how induction is preserved by these correspondences.
For the Weyl groups of the groups we consider here, the irreducible representations in each
case are classified by pairs of Young diagrams whose total numbers of boxes add
up to the rank.
Therefore, the induction \rref{GenerickIndOfSymbol} can be computed by applying the Pieri rule
to these Young diagrams i.e., by considering
all choices of $k_1+k_2 = k$, and adding a row of length $k_1$ to the top row's corresponding
Young diagram and a row of length $k_2$ to the bottom row's corresponding Young diagram.

To find the Weyl group representation corresponding to a symbol \rref{GenericSymbol},
without loss of generality, switch rows so that $a\geq b$, and denote the defect by
$d = a-b$. 

First suppose $d$ is strictly positive.
The symbol notation then indicates that the unipotent representation
$\lambda_1< \dots < \lambda_a \choose \mu_1< \dots < \mu_b$ is constructed in an induction of
the cuspidal unipotent representation corresponding to the symbol
$0< 1<2< \dots< d-1 \choose \emptyset $
(the minimal rank symbol of defect $d$).
The first step of Lusztig's procedure is to ``remove" this cuspidal representation from the symbol
(i.e. by undoing the bijection $j$ of Proposition 3.2 of \cite{LusztigSymbols}), 
to obtain a defect one symbol 
\beg{NewDefect1Symbol}{{\lambda_1< \dots <\lambda_a\choose 0<1< \dots < d-2< \mu_1 + (d-1)< \dots < \mu_b + (d-1)},}
(using the convention of \cite{LusztigSymbols} describing how to reduce a symbol if 
the coordinate of its first two rows is $0$).

The next step is to
undo the procedure described in Subsection 4.6 of \cite{LusztigSymbols} to obtain Young diagrams.
In the case of \rref{NewDefect1Symbol}, we obtain a Young diagram
\beg{TopYoungDiagram}{(\lambda_1 \leq \lambda_2 -1 \leq \dots \leq \lambda_a - (a-1))}
where the $i$th row has length $\lambda_{a-i+1} -(a-i)$ corresponding to the top row,
and a Young diagram
\beg{BottomYoungDiagram}{(\mu_1\leq \mu_2 -1 \leq \dots \leq \mu_b - (b-1))}
where the $i$th row has length $\mu_{b-i+1} - (b-i)$ corresponding to the bottom row
(not writing the rows with length $0$ corresponding to the coordinates
$0<1< \dots < d-2$ concatenated onto the bottom row in \rref{NewDefect1Symbol}).

In the case of defect $d=0$, we undo the procedure in Subsection 4.7 of
\cite{LusztigSymbols} to obtain this same answer, of a Weyl group representation
corresponding to Young diagrams \rref{TopYoungDiagram}, \rref{BottomYoungDiagram}.

We will denote the Young diagrams \rref{TopYoungDiagram}, \rref{BottomYoungDiagram},
by $\alpha$, $\beta$, denoting the $i$th row lengths by
\beg{AlphaBeta}{\begin{array}{c}
\alpha_i := \lambda_{a-i+1} - (a-i)\\[1ex]
\beta_i := \mu_{b-i+1} - (b-i).
\end{array}}
(We use the convention that the first row of the Young diagram is the longest.)

We use the terminology that, for a Young diagram $(\gamma_n \leq \dots \leq \gamma_1)$
and for a natural number $k$,
we say Young diagrams of the form
$$(k_{n+1} \leq \gamma_n + k_n \leq \gamma_n + k_{n-1} \leq \dots \leq \gamma_1+ k_1)$$
where $k_i$ are natural numbers satisfying $k_1+ \dots + k_{n+1} = k$ and, for every $i =1 ,\dots , n$,
we have $k_{i+1}\leq \gamma_{i} - \gamma_{i+1}$ (putting $\gamma_{n+1} = 0$ 
are the Young diagrams {\em obtained by adding a row of length $k$ to $\gamma$}.

\vspace{3mm}

The Pieri rule for symbols can then be stated as follows:

\begin{proposition}\label{PieriRuleProp}
Fix an orthogonal space $(W,B)$ (resp. a symplectic space $V$)
and consider a symbol ${\lambda_1< \dots < \lambda_a \choose\mu_1< \dots < \mu_b}$
defining a unipotent representation of $O(W[-k], B[-k])$ (resp. $\text{Sp}(V[-k])$). 
Recall that we denote by $P_k$ the standard maximal parabolic with Levi subgroup
$O(W[-k], B[-k])\times GL_k (\F_q)$ (resp. $\text{Sp}(V[-k]) \times GL_k (\F_q)$), and consider
the symbol as a $P_k$-representation by letting $GL_k (\F_q)$ act trivially and inflating by letting
the unipotent part of the parabolic act trivially. Then its parabolic induction to an $O(W,B)$-representation
decomposes as a sum of symbols
$$\text{Ind}_{P_k} ({\lambda_1< \dots < \lambda_a \choose \mu_1< \dots < \mu_b}) =
\bigoplus_{\mathcal{S}_k[{\lambda_1 < \dots < \lambda_{a} \choose \mu_1 < \dots < \mu_{b}}]} {\lambda_1' < \dots < \lambda_{a'}' \choose \mu_1' < \dots < \mu_{b'}'}$$
where the sum runs over the set of symbols $\mathcal{S}_k[{\lambda_1 < \dots < \lambda_{a} \choose \mu_1 < \dots < \mu_{b}}]$ consisting of 
${\lambda_1' < \dots < \lambda_{a'}' \choose \mu_1' < \dots < \mu_{b'}'}$ where, for some
$k_1 + k_2 = k$,
the Young diagram 
$$(\lambda_1' \leq \lambda_2' -1 \leq \dots \leq \lambda_{a'}' -(a'-1))$$
is obtained by adding a row of length
$k_1$ to 
$$(\lambda_1 \leq \lambda_2 -1 \leq \dots \leq \lambda_{a} -(a-1)),$$
and the Young diagram
$$(\mu_1' \leq \mu_2' -1 \leq \dots \leq \mu_{b'}' -(b'-1))$$
is obtained by adding a row of length $k_2$ to
$$(\mu_1 \leq \mu_2 -1 \leq \dots \leq \mu_{b} -(b-1)).$$
\end{proposition}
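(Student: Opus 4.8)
The plan is to reduce the computation of $\mathrm{Ind}^{P_k}$ to an induction of modules over a type-$B$ Hecke algebra with unequal parameters, and then to a purely combinatorial Pieri computation for the hyperoctahedral group. First I would fix the Harish-Chandra series: normalizing $a\geq b$ and setting $d=a-b$, the symbol $\binom{\lambda_1<\dots<\lambda_a}{\mu_1<\dots<\mu_b}$ lies in the Harish-Chandra series of the cuspidal unipotent representation of defect $d$ of the minimal classical Levi inside $O(W[-k],B[-k])$ (resp. $Sp(V[-k])$), and by the procedures of \cite{LusztigSymbols}, Proposition 3.2 and Subsections 4.6--4.7 recalled above, it is the unipotent representation attached to the irreducible representation $\sigma_{(\alpha,\beta)}$ of the relative Weyl group $W(B_n)$, where $(\alpha,\beta)$ is the pair of Young diagrams \rref{AlphaBeta} and $n=|\alpha|+|\beta|$. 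Since the factor $GL_k(\F_q)$ of the Levi of $P_k$ acts trivially, and the trivial representation of $GL_k(\F_q)$ is the unipotent representation attached to the single-row partition $(k)$ with trivial cuspidal support, all constituents of $\mathrm{Ind}^{P_k}(\binom{\lambda}{\mu})$ again lie in the defect-$d$ series; in particular the output symbols $\binom{\lambda'}{\mu'}$ satisfy $a'-b'=d$.

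Next I would invoke the comparison theorem relating Harish-Chandra induction to induction of Hecke-algebra modules (the Howlett--Lehrer theory, see \cite{Carter}, together with \cite{LusztigSymbols}, Subsection 4.8): inside the defect-$d$ series, $\mathrm{Ind}^{P_k}(\binom{\lambda}{\mu})$ is computed by the induction from $\mathscr{H}_n(q,q^d)\otimes\mathscr{H}_k(q)$ to $\mathscr{H}_{n+k}(q,q^d)$ of the module $\sigma_{(\alpha,\beta)}\boxtimes\tau_{(k)}$, where $\mathscr{H}_k(q)$ is the type-$A$ Hecke algebra of $S_k$ and $\tau_{(k)}$ is its trivial module. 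By \cite{LusztigSymbols}, Subsection 4.8, the decomposition of this induced Hecke-algebra module coincides with that of the corresponding induced module for the underlying Weyl group along $W(B_n)\times S_k\hookrightarrow W(B_{n+k})$, the relevant structure constants being Littlewood--Richardson coefficients, which do not depend on the Hecke parameters.

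Then I would carry out the Weyl-group computation. For the permutation embedding $S_k\hookrightarrow W(B_{n+k})$, the branching rule for the hyperoctahedral group gives
\[
\mathrm{Ind}_{W(B_n)\times S_k}^{W(B_{n+k})}\!\big(\sigma_{(\alpha,\beta)}\boxtimes\tau_{(k)}\big)=\bigoplus_{(\alpha',\beta')}\Big(\sum_{\gamma_1,\gamma_2}c^{(k)}_{\gamma_1\gamma_2}\,c^{\alpha'}_{\alpha\gamma_1}\,c^{\beta'}_{\beta\gamma_2}\Big)\,\sigma_{(\alpha',\beta')}.
\]
Because $(k)$ is a single row, $c^{(k)}_{\gamma_1\gamma_2}=1$ exactly for $\gamma_1=(k_1)$, $\gamma_2=(k_2)$ with $k_1+k_2=k$ and vanishes otherwise, while the classical Pieri rule gives $c^{\alpha'}_{\alpha,(k_1)}=1$ precisely when $\alpha'/\alpha$ is a horizontal strip of $k_1$ boxes — which is exactly the condition that $\alpha'$ is obtained from $\alpha$ by adding a row of length $k_1$ in the sense defined above (the inequalities $k_{i+1}\leq\gamma_i-\gamma_{i+1}$ are precisely the statement that $\alpha'_{i+1}\leq\alpha_i$, i.e. the added cells lie in distinct columns) — and $=0$ otherwise; likewise for $\beta'$. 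Hence $\sigma_{(\alpha',\beta')}$ occurs with multiplicity one iff, for some $k_1+k_2=k$, $\alpha'$ is obtained from $\alpha$ by adding a row of length $k_1$ and $\beta'$ from $\beta$ by adding a row of length $k_2$. Applying the inverse of Lusztig's bijection at defect $d$ to $(\alpha',\beta')$ then produces the reduced symbol $\binom{\lambda_1'<\dots<\lambda_{a'}'}{\mu_1'<\dots<\mu_{b'}'}$ with $\lambda_{a'-i+1}'-(a'-i)=\alpha_i'$ and $\mu_{b'-i+1}'-(b'-i)=\beta_i'$, which is exactly the set $\mathcal{S}_k[\binom{\lambda}{\mu}]$ in the statement; the symplectic case is identical, the relative Weyl groups again being of type $B$.

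I expect the main obstacle to be the faithful translation across the three dictionaries — symbols $\leftrightarrow$ $W(B)$-representations (via \cite{LusztigSymbols}, 3.2, 4.6--4.7), Harish-Chandra induction $\leftrightarrow$ Hecke-algebra induction, and Hecke-algebra induction $\leftrightarrow$ Weyl-group induction — in particular verifying that the trivial $GL_k(\F_q)$-representation really contributes the partition $(k)$ rather than the sign-twisted $(1^k)$, that $S_k$ sits inside $W(B_{n+k})$ by permutations so that the combinatorics is the unsigned Littlewood--Richardson/Pieri rule, and, for the split even orthogonal groups in defect $0$, that the degenerate bipartitions with $\alpha'=\beta'$ are handled consistently with the convention that switching rows gives the same symbol. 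The remaining bookkeeping — that a horizontal strip may create a new part, forcing $a'=a+1$ — is automatically absorbed by the reduced-symbol normalization dictated by the fixed defect, and the Pieri computation on bipartitions itself is routine.
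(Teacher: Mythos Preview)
Your proposal is correct and follows essentially the same route as the paper: translate the symbol to a bipartition via Lusztig's dictionary (Proposition 3.2 and Subsections 4.6--4.7 of \cite{LusztigSymbols}), use \cite{LusztigSymbols}, Subsection 4.8 to identify Harish-Chandra induction with Weyl-group induction, and then apply the classical Pieri rule for $W(B_n)\times S_k\hookrightarrow W(B_{n+k})$. The paper does not give a standalone proof of this proposition --- it simply asserts that the rule ``can be derived directly from the definition of the symbols (see \cite{LusztigSymbols}, Subsection 4.8)'' and records the bipartition translation --- so your write-up is in fact more detailed than what appears in the text, particularly in making the Howlett--Lehrer/Hecke-algebra step and the Littlewood--Richardson bookkeeping explicit.
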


\noindent (The awkwardness of this statement is in order
to accomodate all cases of input symbols,
including those where one of the rows of the original symbol
begins with a $0$ coordinate, and to properly address
the case when $a'$ and $b'$ are larger than $a$ and $b$.)

\subsection{The proof of the main statement}

We then conclude Theorem \ref{MorePreciseAlternatingSumProp}
by considering it in terms of Young diagrams, and applying this Pieri rule.

\begin{proof}[Proof of Theorem \ref{MorePreciseAlternatingSumProp}]
Let us use the above notation \rref{AlphaBeta}
for an alterable unipotent part of a representation $\rho$ for which we want
to find the coefficient $\mathcal{A}_k (\rho ,N_\rho' )$ appearing with $\eta^V_{W,B} (\rho)$.
Since the resulting Young diagrams \rref{TopYoungDiagram}, \rref{BottomYoungDiagram}
ultimately do no depend on which row was on top or longer,
we may say without loss of generality that the top row is the one where we add the coordinate
$N_\rho'$ when constructing $\eta^V_{W,B} (\rho)$.

Applying this to the context of Theorem \ref{MorePreciseAlternatingSumProp}, the symbol 
\beg{AlteredMissingiNewCoord}{\lambda_1< \dots < \lambda_{i-1} < \lambda_{i+1}< \dots < \lambda_{a} < N_\rho'\choose
\mu_1< \dots < \mu_b}
appearing in the $i$th term
of the alternating sum \rref{AlternatingSumOfSymbols} 
corresponds to the same cuspidal representation
$0 <1< \dots< d-1 \choose \emptyset$ as the original symbol (since it has the same row lengths)
and the Young diagrams
\beg{ithYoungDiagramTerm}{\begin{array}{c}
(\alpha_a \leq \dots \leq \alpha_{a-i+2} \leq \alpha_{a-i}+1 \leq \dots \leq \alpha_1 +1 \leq N_\rho' - a) \\[1ex]
(\beta_b \leq \dots \leq \beta_1).
\end{array}
}
(See Figure 1 for an example.)

\begin{figure}
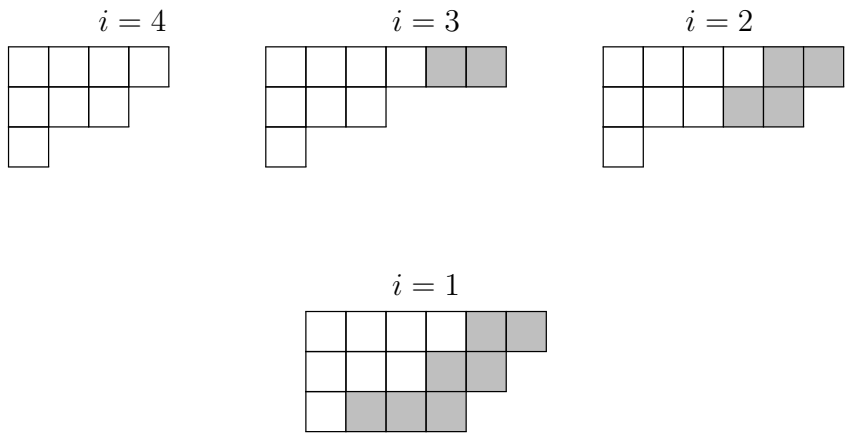

\ytableausetup
{boxsize=1.25em}
\ytableausetup
{aligntableaux=top}

$$i=4 \hspace{30mm} i = 3 \hspace{30mm} i = 2$$
\begin{ytableau}
*(gray!0) &  &  &  \\
  &  &  \\
 \\
\end{ytableau}
\hspace{10mm}
\begin{ytableau}
*(gray!0) &  &  & & *(gray!50) & *(gray!50)\\
  &  &  \\
 \\
\end{ytableau}
\hspace{10mm}
\begin{ytableau}
*(gray!0) &  &  & & *(gray!50) & *(gray!50)\\
  &  &  &*(gray!50) &*(gray!50)   \\
 \\
\end{ytableau}

\vspace{3mm}
$$i=1$$
\begin{ytableau}
*(gray!0) &  &  & & *(gray!50) & *(gray!50)\\
  &  &  &*(gray!50) &*(gray!50)   \\
 &*(gray!50) & *(gray!50)  & *(gray!50)  \\
\end{ytableau}

\caption{\tiny These are the top row Young diagrams corresponding to the symbols \rref{AlteredMissingiNewCoord}, in the case
of $\alpha = (1 \leq 3 \leq 4)$, $N_\rho' -a = 6$. The boxes highlighted gray show the skew semistandard
tableau which, after removal from each Young diagram recovers the original $\alpha$.
At each $i$, just enough boxes are added to a row
of the $(i+1)$th Young diagram to not be obtainable from the $(i+2)$th.}
\end{figure}

\vspace{1mm}

The claimed reduction of the alternating sum consists of symbols
corresponding to the same underlying cuspidal representation and
a pair of Young diagrams $(\alpha', \beta')$ obtained from applying the $k$-step Pieri rule
to $(\alpha ,\beta)$ such that the first row length of $\alpha'$ is strictly bounded
\beg{YoungDiagramVersionOfTopRowBound}{\alpha'_1  < N_\rho' - a.}
First we note that the summands of the initial parabolic induction satisfying this condition
must survive in the alternating sum, since the condition \rref{YoungDiagramVersionOfTopRowBound}
guarantees that such an $(\alpha', \beta')$ cannot appear in the parabolic induction of any
of the other terms for $i\leq a$, since the first row is shorter than the first row of any top Young diagram
in \rref{ithYoungDiagramTerm}.

\vspace{1mm}

It remains to see that every other term in the alternating sum vanishes.
First, for summands of the initial parabolic induction term $Ind_{P_k} ({\lambda_1<\dots < \lambda_a
\choose \mu_1< \dots < \mu_b})$ which fail the condition
\rref{YoungDiagramVersionOfTopRowBound} are cancelled in the next term at $i=a$, since its corresponding
top Young diagram $(\alpha_a \leq \dots \leq \alpha_2 \leq N_\rho' - a)$.
Similarly, we may proceed inductively to see that every pair of Young diagrams
appearing from the $i$th term of the alternating sum which was not used to cancel the $(i+1)$th
term is cancelled at the $(i-1)$th term. This follows
since, at each step $i$, the condition on a pair of Young
diagrams to have already appeared
in the $(i+1)$th term (and therefore be cancelled in the previous step) is that the length of the
$(a-i+2)$th row does not exceed what can be attained from the previous step without adding boxes directly
above each other, i.e.
\beg{RowLegalityCondPieri}{\alpha_{a-i+2}' \leq \alpha_{a-i+1}.}
On the other hand, the condition on $\alpha '$ to appear in the $(i-1)$th term is complementary
$$\alpha_{a-i+2}' \geq \alpha_{a-i+1} +1,$$
since the condition is that
the $(a-i+2)$th row is at least as long as the corresponding
row of the top Young diagram in the $(i-1)$th term,
so all the cases are covered.

\vspace{3mm}

It remains therefore to check that every term
from the final term (corresponding to the lowest $i$) in the alternating sum cancels.
Recall that the sum \rref{AlternatingSumOfSymbols} has final term at $i =i_0$, which is the smallest.
First, suppose $i_0 >1$. Then by definition
\beg{NextI0Negative}{k^{(i_0 -1)} = k-N_\rho' +  \lambda_{i_0-1} < 0,}
and thus, when considering the final induction
$$\text{Ind}_{P_{k^{(i_0)}}} ({\lambda_1< \dots < \widehat{\lambda_{i_0}} < \dots < \lambda_a \choose \mu_1< \dots < \mu_b}),$$
on the Young diagram level, every top Young diagram appearing satisfies the condition
\rref{RowLegalityCondPieri}, since it we would need to add
at least $\lambda_{i_0} -\lambda_{i_0 -1}$ boxes, which is strictly more than $k^{(i_0)}$ by
\rref{NextI0Negative}. Therefore, all terms are used up in the previous step, giving the claim.
In the case when $i_0 = 1$, we again know that \rref{RowLegalityCondPieri} must
also follow, by the metastable dimension condition, since the final number of boxes to be added is
$$k^{(1)} = k-N_\rho' + \lambda_1 <\lambda_1$$
(since $k \leq m$ and $N_\rho' > m$ because we assumed that
$V$ and $(W,B)$ lie in the symplectic stable or metastable range).
\end{proof}

\noindent {\bf Example:} We emphasize the point that this alternating sum,
even in extreme cases, need not by irreducible.
The largest symbol (giving the unipotent irreducible representation of maximal
dimension) for $SO_{2m+1} (\F_q)$ is
$${0<1< 2< \dots < m \choose 1< 2< \dots < m},$$
which gives the Steinberg representation $St_m$, of dimension $q^{m^2}$.
In this case, its corresponding pair of Young diagrams consists of a column of $m$ boxes (corresponding
to the top row) and the empty Young diagram (corresponding to the bottom row
In this case, the Pieri rule adding a single box gives
$$
\begin{array}{c} 
\displaystyle \text{Ind}_{P_1} (St_{m})= {0<1 <2< \dots< m-1< m+1 \choose 1< 2< \dots < m
} \oplus \\
\\
\displaystyle {0<1<2 <\dots < m \choose 1<2< \dots < m-1< m+1 } \oplus St_{m+1}
\end{array}$$
Then, for example, the alternating sum of symbols $A_1^\pm (St_m, m+1)$ with
sign chosen to alter the top row outputs
$$\begin{array}{c}
\displaystyle \text{Ind}_{P_1} (St_{m}) - {0<1 <2< \dots< m-1< m+1 \choose 1< 2< \dots < m } =\\
\\
\displaystyle  {0<1<2 <\dots < m \choose 1<2< \dots < m-1< m+1} \oplus St_{m+1}
\end{array}$$

\appendix
\section{Dictionary with the notation of S.-Y. Pan}\label{PanAppendix}

The purpose of this Appendix is to provide a dictionary between the notation used in this paper,
and the notation used by S.-Y. Pan in \cite{Pan1, Pan2}.

First note that Pan uses
a modified description of the unipotent irreducible
representations of the symplectic and orthogonal groups.
We consider the classical description of unipotent irreducible representations
of a connected group $G$, say $SO_{2m+1} (\F_q)$, $\text{Sp}_{2N} (\F_q)$, or $SO_{2m}^\pm (\F_q)$
in terms of Lusztig symbols with unordered rows,
so that
\beg{UnorderedSymbol}{{\lambda_1< \dots < \lambda_a \choose \mu_1< \dots < \mu_b} = {\mu_1< \dots < \mu_b \choose
\lambda_1< \dots < \lambda_a}}
(requiring, in the case when $G$ is of $B$- or $C$-type, simply that the defect is odd).
This is sufficient for our discussion of the irreducible unipotent representations of symplectic
groups and the odd orthogonal groups (where the center splits off $O_{2m+1}(\F_q) = \mu_2 \times SO_{2m+1} (\F_q)$). 
In the case of $D$-type, the combinatorial data of symbols
\rref{UnorderedSymbol}, requiring defect to be $0$ or $2$ mod $4$, correspond
to the irreducible unipotent representations of $SO_{2m}^+ (\F_q)$ and $SO_{2m}^- (\F_q)$.

However, performing an induction from $SO_{2m}^\pm (\F_q)$ to $O_{2m}^\pm (\F_q)$
(for $m>0$), the resulting unipotent representation of $O_{2m}^\pm (\F_q)$ of twice the dimension
of ${\lambda_1< \dots < \lambda_a \choose \mu_1< \dots < \mu_b}$ splits into two
non-isomorphic equidimensional pieces.
In our notation, we identify these pieces by labelling these unipotent irreducible
$O_{2m}^\pm (\F_q)$-representations according to their underlying symbol
\rref{UnorderedSymbol} and the data of a central sign $\pm$ indicating the action of
$\mu_2 = O_{2m}^\pm (\F_q)/ SO_{2m}^\pm (\F_q)$, determining the relevant piece
of the underlying symbol's induction:
$$\begin{array}{c}
\displaystyle Ind_{SO_{2m}^\pm (\F_q)}^{O_{2m}^\pm (\F_q)} {\lambda_1< \dots < \lambda_a\choose
\mu_1< \dots < \mu_b} = \\
\\
r^{O_{2m}^\pm (\F_q)}[(1),{\lambda_1< \dots < \lambda_a\choose
\mu_1< \dots < \mu_b}]^{(+1)} \oplus r^{O_{2m}^\pm (\F_q)}[(1),
{\lambda_1< \dots < \lambda_a\choose
\mu_1< \dots < \mu_b}]^{(-1)}.
\end{array}$$

On the other hand, Pan labels these two unipotent irreducible $O_{2m}^\pm (\F_q)$-representations
by enforcing an ordering on the rows of a symbol (in this paper, we distinguish this notation by
using square brackets), so that
$$Ind_{SO_{2m}^\pm (\F_q)}^{O_{2m}^\pm (\F_q)} {\lambda_1< \dots < \lambda_a\choose
\mu_1< \dots < \mu_b} = \begin{bmatrix} 
\lambda_1< \dots < \lambda_a\\
\mu_1< \dots < \mu_b
\end{bmatrix} \oplus
\begin{bmatrix} 
\mu_1< \dots < \mu_b\\
\lambda_1< \dots < \lambda_a
\end{bmatrix}$$
for non-isomorphic
$$ \begin{bmatrix} 
\lambda_1< \dots < \lambda_a\\
\mu_1< \dots < \mu_b
\end{bmatrix} \neq
\begin{bmatrix} 
\mu_1< \dots < \mu_b\\
\lambda_1< \dots < \lambda_a
\end{bmatrix}.$$
(We note Pan also changes the order of a symbol's entries in his notation
in \cite{Pan1},
writing each row in strictly descreasing order).
Pan also imposes an ordering on the symbols ccorresponding to the unipotent
irreducible representations of $\text{Sp}_{2N} (\F_q)$ or $SO_{2m+1} (\F_q)$, demanding then that
$$ \begin{bmatrix} 
\lambda_1< \dots < \lambda_a\\
\mu_1< \dots < \mu_b
\end{bmatrix}$$
has defect $a-b$ exactly $1$ mod $4$ (so that there is exactly one
ordered symbol corresponding to an odd defect unordered symbol \rref{UnorderedSymbol}).

\vspace{1mm}

Now, Pan describes in \cite{Pan2} the decomposition of the unipotent part of
\beg{AppenResdOsc}{Res_{\text{Sp}(V) \times O(W,B)} (\omega [V\otimes W])}
for even-dimensional $W$ as a direct sum of tensor products $\rho \otimes \pi$ for certain
pairs of irreducible unipotent representations $\rho \in \widehat{\text{Sp}(V)}$ and
$\pi \in \widehat{O(W,B)}$. Specfically, Pan describes that in the split case
$O(W,B) = O_{2m}^+ (\F_q)$, a tensor product of an ordered
symbol of $\text{Sp}(V)$ with an ordered symbol of $O(W,B)$
\beg{TensorOfTwoOrderedSymbols}{\begin{bmatrix} 
\lambda_1< \dots < \lambda_a\\
\mu_1< \dots < \mu_b
\end{bmatrix} \otimes \begin{bmatrix} 
\lambda_1 '< \dots < \lambda_{a'}'\\
\mu_1 ' < \dots < \mu_{b'}'
\end{bmatrix}}
is a summand of the unipotent part of \rref{AppenResdOsc} precisely when
\begin{itemize}
\item The Young diagram with row lengths
$$(\lambda_{a'}' - (a'-1), \lambda_{a'-1} - (a'-2), \dots , \lambda_1')$$ can be obtained by adding
a row (we discuss this notion in more detail in Subsection \ref{PieriRuleSubsection})
to the Young diagram with row lengths
$$(\mu_b - (b-1), \mu_{b-1} - (b-2), \dots , \mu_1).$$

\vspace{2mm}

\item The Young diagram with row lengths
$$(\lambda_{a} - (a-1), \lambda_{a-1} - (a-2), \dots , \lambda_1)$$ can be obtained by adding
a row to the Young diagram with row lengths
$$(\mu_{b'}' - (b'-1), \mu_{b'-1}' - (b'-2), \dots , \mu_1').$$

\vspace{2mm}

\item The defects precisely satisfy $a'-b' = -(a-b) +1$.

\end{itemize}
Similarly, Pan proves that in the non-split case
$O(W,B) = O_{2m}^- (\F_q)$, a tensor product \rref{TensorOfTwoOrderedSymbols}
of an ordered
symbol of $\text{Sp}(V)$ with an ordered symbol of $O(W,B)$ 
is a summand of the unipotent part of \rref{AppenResdOsc} precisely when
\begin{itemize}
\item The Young diagram with row lengths
$$(\mu_{b'}' - (b'-1), \mu_{b'-1} - (b'-2), \dots , \mu_1')$$ can be obtained by adding
a row to the Young diagram with row lengths
$$(\lambda_a - (a-1), \lambda_{a-1} - (a-2), \dots , \lambda_1).$$

\vspace{2mm}

\item The Young diagram with row lengths
$$(\mu_{b} - (b-1), \mu_{b-1} - (b-2), \dots , \mu_1)$$ can be obtained by adding
a row to the Young diagram with row lengths
$$(\lambda_{a'}' - (a'-1), \lambda_{a'-1}' - (a'-2), \dots , \lambda_1').$$

\vspace{2mm}

\item The defects precisely satisfy $a'-b' = -(a-b) -1$.

\end{itemize}
Therefore, Pan decomposes the unipotent part of the restriction of an oscillator representation
to $\text{Sp}(V) \times O(W,B)$, for even-dimensional $W$. (Of course, in the case of odd-dimensional $W$,
there is no unipotent part of the restriction of $\omega [V\otimes W]$.)

\vspace{3mm}

In \cite{Pan2}, Pan approaches the question of which pairs $\rho \otimes \pi$ appear with non-zero
multiplicity in the restricted oscillator representation for general irreducible representations
$\rho \in \widehat{\text{Sp}(V)}$ and $\pi \in \widehat{O(W,B)}$ by claiming a ``commutation with the
Lusztig correspondence." In essence, this means that a pair of irreducible representations appears
with non-zero multiplicity precisely when a pair of factors (corresponding to a certain eigenvalue
of the semisimple part of the classification data) of the unipotent part of their
classification data appears in the unipotent part of the appropriate restricted oscillator representation.

\vspace{2mm}

We now explain how our decomposition recovers Pan's classification of
the occurring summands.

First, we note that our constructions of the pieces of the restricted oscillator representation
always only involve ``altering" the Lusztig data of an input representation associated
to a certain specific eigenvalue of the semisimple data (the eigenvalue $1$ for even orthogonal groups
and the eigenvalue $-1$ for odd orthogonal groups). In particular, we can see the effect
of "commuting with the Lusztig correspondence" in the sense which Pan uses to pass from 
his decomposition of the unipotent part 
of the restricted oscillator representation
to information about the irreducible pairs which could appear in the full representation.
This reduces us to needing to see that the unipotent part of our decomposition
in each case matches the description of Pan's unipotent summands.

We begin with considering $(V, (W,B)$ in the symplectic stable or metastable
range. We use the eta correspondence and its extension in this case. From the decomposition
given in Theorem \ref{ExplicitTheoremSymp}, we find the unipotent
part of the restriction of $\omega [V\otimes W]$ to $\text{Sp}(V) \times O(W,B)$ is the sum
over $k = 0, \dots , h_B$ and over every unipotent irreducible representation $\pi$
of $O(W[-k], B[-k])$
of summands of the form
$$\eta^V_{W[-k],B[-k]} (\pi) \otimes \mathcal{A}_k (\pi , N_\pi' ).$$
Say the restriction of $\pi$ to $SO (W[-k], B[-k])$ corresponds to the symbol
${\alpha_1< \dots < \alpha_a \choose \beta_1< \dots < \beta_b}$ (we use different
notation for the entries here to avoid confusion with Pan's notation), so that
$N_\pi' = N-m + \frac{a+b}{2}$. Say $\eta^V_{W,B} (\pi)$ is constructible. Then,
depending on the central sign, $\eta^V_{W,B} (\pi)$ is the unipotent irreducible representation
of $\text{Sp}(V)$ corresponding to one of the symbols
\beg{AlphBetaWhereIsN'}{{\alpha_1< \dots < \alpha_a \choose \beta_1< \dots < \beta_b< N_\pi' } \text{ or }
{\alpha_1< \dots < \alpha_a < N_\pi' \choose \beta_1< \dots < \beta_b }.}
Now, for the orthogonal groups representation factor,
the parabolic induction of this symbol is a sum of
symbols
\beg{Alph'Beta'}{{\alpha_1'< \dots < \alpha_{a'}' \choose \beta_1' < \dots < \beta_{b'}'}}
such that the Young diagram 
with row lengths 
$$(\alpha_{a'}' - (a'-1), \alpha_{a'-1}' - (a'-2), \dots , \alpha_1')$$
can be obtained by adding a row to the Young diagram with row lengths
$$(\alpha_{a} - (a-1), \alpha_{a-1} - (a-2), \dots , \alpha_1),$$
and similarly, the Young diagram 
with row lengths 
$$(\beta_{b'}' - (b'-1), \beta_{b'-1}' - (b'-2), \dots , \beta_1')$$
can be obtained by adding a row to the Young diagram with row lengths
\beg{2conditionsapppendix}{(\beta_{b} - (b-1), \beta_{b-1} - (b-2), \dots , \beta_1),}
according to the Pieri rule.
The summands which contribute and survive in the alternating sum $\mathcal{A}_k (\pi, N_\pi')$
are these symbols \rref{Alph'Beta'} such that 
$$\beta'_{b'} - (b'-1) \leq N_\pi' - b \text{ or }\alpha'_{a'} - (a'-1) \leq N_\pi' -a ,$$
respectively (corresponding to \rref{AlphBetaWhereIsN'}). To understand
the relation with Pan's description, we can rephrase the conditions \rref{2conditionsapppendix}
as demanding
that the Young diagram 
$$(N_\pi' - a, \alpha_{a} - (a-1), \alpha_{a-1} - (a-2), \dots , \alpha_1)$$ 
can be obtained by adding a row to
$(\alpha_{a'}' - (a'-1), \alpha_{a'-1}' - (a'-2), \dots , \alpha_1')$ or, respectively, that
the Young diagram 
$$(N_\pi' - b, \beta_{b} - (b-1), \beta_{b-1} - (b-2), \dots , \beta_1)$$
can be obtained by adding a row to
$(\beta_{b'}' - (b'-1), \beta_{b'-1}' - (b'-2), \dots , \beta_1')$.

Let us now suppose we are specifically working in the split case,
i.e. $SO(W, B) = SO_{2m}^+ (\F_q)$, so that $a-b$ is $2$ mod $4$. 
The summand we have identified, in the ordered symbol notation, is
$$\begin{bmatrix} 
\beta_1< \dots < \beta_b < N_\pi'\\
\alpha_1< \dots < \alpha_a
\end{bmatrix} 
\otimes \begin{bmatrix} 
\alpha_1'< \dots < \alpha_{a'}'\\
\beta_1'< \dots < \beta_{b'}'
\end{bmatrix} 
$$ 
or
$$\begin{bmatrix} 
\alpha_1< \dots < \alpha_a< N_\pi'\\
\beta_1< \dots < \beta_b 
\end{bmatrix} 
\otimes \begin{bmatrix} 
\beta_1'< \dots < \beta_{b'}'\\
\alpha_1'< \dots < \alpha_{a'}'
\end{bmatrix} 
$$
(with the above corresponding restrictions on $\alpha_i'$ and $\beta_j'$), respectively.
We can therefore see that by re-labelling the top rows of the symbols as $\lambda$ and $\lambda'$
and the bottom rows as $\mu$ and $\mu'$, we exactly recover the conditions Pan described.
The non-split case of $O_{2m}^- (\F_q)$ proceeds similarly.

\vspace{3mm}

Now let us consider $(V, (W,B)$ in the orthogonal stable or metastable range.
We use the zeta correspondence and its extension in this case. From the decomposition
given in Theorem \ref{ExplicitTheoremOrtho}, we find the unipotent
part of the restriction of $\omega [V\otimes W]$ to $\text{Sp}(V) \times O(W,B)$ is the sum
over $k = 0, \dots , N$ and over every unipotent irreducible representation $\rho$
of $\text{Sp}(V[-k])$
of summands of the form
$$\mathcal{A}_k (\rho , N_\rho' ) \otimes \zeta^{W,B}_{V[-k]} (\rho).$$
Say $\rho$ corresponds to a symbol ${\alpha_1< \dots< \alpha_a \choose \mu_1< \dots < \mu_b}$,
and switch rows so that $a-b$ is $1$ mod $4$. Then
$N_\rho' = N-m+ \frac{a+b-1}{2}$. Say $\zeta^{W,B}_V (\rho)$ is constructible.
Then, in the split case $O(W,B) = O_{2m}^+ (\F_q)$, we get that the underlying $SO_{2m}^\pm (\F_q)$
symbol is
$${\alpha_1< \dots < \alpha_a < N_\rho' \choose \mu_1< \dots < \mu_b},$$
with determined central sign data.
Recalling the above description of the Pieri rule, the summands surviving
in $\mathcal{A}_k (\rho, N_\rho')$ consist of symbols
${\alpha_1'< \dots< \alpha_{a'}'\choose \mu_1< \dots< \mu_{b'}'}$ 
such that the Young diagram with row lengths
$(\alpha_{a'}' - (a'-1), \alpha_{a'-1}'-(a'-2), \dots , \alpha_{1}')$
can be obtained from adding a row to $(\alpha_a - (a-1), \alpha_{a-1} - (a-2), \dots , \alpha_1)$
such that $\alpha_{a'}-(a'-1) \leq N_\rho'- a$ and such that the Young diagram
with row lengths $(\beta_{b'}' - (b'-1), \beta_{b'-1}'-(b'-2), \dots , \beta_{1}')$
can be obtained from adding a row to $(\beta_b- (b-1), \beta_{b-1} - (b-2), \dots , \beta_1)$.
Again, the condition on the top row is equivalent to requiring that
the Young diagram 
$$(N_\rho' -a , \alpha_a - (a-1),\alpha_{a-1} - (a-2), \dots , \alpha_1)$$
can be obtained by adding a row to
$(\alpha_{a'}' - (a'-1), \alpha_{a'-1}'-(a'-2), \dots , \alpha_{1}')$.

In the ordered Lusztig symbol notation, these term we have identified are of the form
$$\begin{bmatrix} 
\alpha_1' < \dots < \alpha_{a'}' \\
\beta_1' < \dots < \beta_{b'}'
\end{bmatrix} \otimes
\begin{bmatrix} 
\beta_1 < \dots < \beta_b \\
\alpha_1 < \dots < \alpha_a < N_\rho'
\end{bmatrix}
$$
and we can see that by re-labelling the entries of the top row as $\lambda$ and $\lambda'$
and the bottom row as $\mu$ and $\mu'$, our conditions for $\alpha_i'$ and $\beta_j'$
are precisely those stated by Pan.
The non-split case of $O(W,B) = O_{2m}^- (\F_q)$ proceeds similarly.


\begin{thebibliography}{99}
\bibitem{AdamsMoy}
J. Adams, A. Moy. Unipotent representations and reductive dual pairs over finite fields.
{\em Trans. Amer. Math. Soc.}, 340 (1993), 309–321

\vspace{5mm}



\bibitem{AubertKP}
A.-M. Aubert, W. Kra\'{s}kiewicz, T. Przebinda. Howe correspondence and Springer correspondence
for dual pairs over a finite field, In: Lie Algebras, Lie Superalgebras, Vertex
Algebras and Related Topics, {\em Proc. Sympos. Pure Math.}, 92, Amer. Math. Soc., Providence,
RI, 2016, pp. 17-44.

\vspace{5mm}

\bibitem{AubertMichelRouquier} A.-M. Aubert, J. Michel, R. Rouquier. 
Correspondance de Howe pour les groupes réductifs
sur les corps finis. {\em Duke Math. J.}, 83 (1996), 353-397.

\vspace{5mm}

\bibitem{Carter} R. Carter. Centralizers of semisimple elements in the finite classical groups
{\em Proc. London Math. Soc.}, 42 (1981), 1-41.

\vspace{5mm}



\bibitem{CurtisReiner} C. W. Curtis, I. Reiner.
Methods of representation theory. Vol. II.
With applications to finite groups and orders.
{\em Pure Appl. Math.} (N.Y.)
Wiley-Intersci. Publ.
John Wiley \& Sons, Inc., New York, 1987. xviii+951 pp.

\vspace{5mm}

\bibitem{DeligneSymetrique}
P. Deligne. La cat\'{e}gorie des repr\'{e}sentations du groupe sym\'{e}trique $S_t$, lorsque $t$ n'est pas un entier naturel. {\em Algebraic groups and homogeneous spaces}, 209-273, Tata Inst. Fund. Res. Stud. Math., 19, Tata Inst. Fund. Res., Mumbai, 2007.

\vspace{5mm}

\bibitem{DeligneTensor} P. Deligne. Cat\'{e}gories Tensorielles.
{\em Mosc. Math. J}. 2 2 (2002) pp. 227-248.


\vspace{5mm}


\bibitem{DeligneLusztig} P. Deligne, G. Lusztig.
Representations of reductive groups over finite fields.
{\em Ann. of Math.} (2) 103 (1976), no. 1, 103-161.

\vspace{5mm}

\bibitem{DeligneMilne}
P. Deligne, J. Milne: {\em Cat\'{e}gories Tannakiennes}, Grothendieck Festschrift, vol. II,
Birkh\"{a}user Progress in Math. 87, 1990, pp. 111-195.
\vspace{5mm}



\bibitem{DeligneLusztigDisconnected}
F. Digne, J. Michel.
On Lusztig's parametrization of characters of finite groups of Lie type.
{\em Ast\'{e}risque} No. 181-182 (1990), 6, 113-156.

\vspace{5mm}

\bibitem{DigneMichel}F. Digne, J. Michel:
{\em Representations of finite groups of Lie type.}
2nd ed. London Math. Soc. Stud. Texts, 95
{\em Cambridge University Press}, Cambridge, 2020. vii+257 pp.

\vspace{5mm}


\bibitem{Chavez} J. Epequin Chavez.
Extremal unipotent representations for the finite Howe correspondence,
{\em J. Algebra} 535 (2019), 480-502.

\vspace{5mm}


\bibitem{EGNOBook} 
P. Etingof, S. Gelaki, D. Nikshych, V. Ostrik: {\em Tensor categories}.
Math. Surveys Monogr., 205
American Mathematical Society, Providence, RI, 2015, xvi+343 pp.

\vspace{5mm}

\bibitem{GanTakeda}
W.T. Gan, S. Takeda. A proof of the Howe duality conjecture. {\em J. Amer. Math. Soc.}, 29
(2016), 473-493.

\vspace{5mm}

\bibitem{GelbartDuals} S.S. Gelbart. Examples of dual reductive pairs, In: {\em Automorphic Forms, Representations
and L-functions}, Oregon State Univ., Corvallis, OR, 1977, {\em Proc. Sympos. Pure Math.}, 33,
Amer. Math. Soc., Providence, RI, 1979, pp. 287-296.

\vspace{5mm}

\bibitem{Gerardin} P. G\'{e}rardin. Weil representations associated to finite fields.
{\em J. Algebra}, 46 (1977), 54-101.

\vspace{5mm}

\bibitem{PiGuralLarsTiep} R. M. Guralnick, M. Larsen, P. H. Tiep.
Character levels and character bounds
{\em Forum Math. Pi}, 8 (2020), e2, 81 pp.

\vspace{5mm}

\bibitem{HoweGurevich} S. Gurevich, R. Howe. Rank and duality in representation theory,
{\em Jpn. J. Math.} 15 (2020), 223-309.

\vspace{5mm}

\bibitem{HoweGurevichBook} S. Gurevich, R. Howe. Small representations of finite classical groups,
{\em Progr. Math.}, 323
{\em Birkhäuser/Springer}, Cham, 2017, 209-234.



\vspace{5mm}

\bibitem{HarmanSnowden} N. Harman, A. Snowden. Oligomorphic groups and tensor categories. arXiv:2204.04526, 2022.

\vspace{5mm}

\bibitem{HoweFiniteFields} R. Howe. Invariant Theory and Duality for Classical Groups over Finite Fields,
with Applications to their Singular Representation Theory, preprint, Yale University.

\vspace{5mm}

\bibitem{Howe} R. Howe. On the character of Weil's representation, {\em Trans. Amer.
Math. S.} 177 (1973), 287-298.



\vspace{5mm}

\bibitem{HoweKobayashi} R. Howe. The oscillator semigroup over finite fields, to appear in
{\em Symmetry in Geometry and Analysis, Volume 1: Festschrift in Honor of Toshiyuki Kobayashi,} 2025.

\vspace{5mm}


\bibitem{HoweTheta} R. Howe, $\theta$-series and invariant theory, In: 
{\em Automorphic Forms, Representations and L-Functions},
Oregon State Univ., Corvallis, OR, 1977, {\em Proc. Sympos. Pure Math.}, 33, Amer.
Math. Soc., Providence, RI, 1979, pp. 275-285.


\vspace{5mm}

\bibitem{Katz} N. M. Katz.
Larsen's alternative, moments, and the monodromy of Lefschetz pencils.
{\em Contributions to automorphic forms, geometry, and number theory}, 521-560.
Johns Hopkins University Press, Baltimore, MD, 2004.

\vspace{5mm}

\bibitem{KatzTiepMoments} N. M. Katz, P. H. Tiep. Moments, exponential sums, and
monodromy groups. Available at https://web.math.princeton.edu/$\sim$nmk/kt24-70.pdf

\vspace{5mm}


\bibitem{KatzTiepConjecture} N. M. Katz, P. H. Tiep. On a Conjecture of Miyamoto, preprint.



\vspace{5mm}


\bibitem{Knop1} F. Knop. A construction of semisimple tensor categories. {\em C. R. Math. Acad. Sci. Paris C}. 343, 2006.

\vspace{5mm}


\bibitem{Knop2} F. Knop. Tensor Envelopes of Regular Categories. {\em Adv. Math.} 214, 2007.

\vspace{5mm}


\bibitem{VectorDelannoy} S. Kriz. Quantum Delannoy categories, 2023. Available at
https://krizsophie.github.io

\vspace{5mm}


\bibitem{OscillatorRepsFull} S. Kriz. Oscillator Representations and Semisimple Pre-Tannakian Categories, 2024.
Available at https://krizsophie.github.io

\vspace{5mm}



\bibitem{InterpolatedSchemes} S. Kriz. Interpolated equivariant schemes, 2025. Available at
https://krizsophie.github.io

\vspace{5mm}

\bibitem{TotalHoweI} S. Kriz. Howe duality over finite fields I: The two stable ranges, preprint, 2025.
Available at https://arxiv.org/abs/2412.15346

\vspace{5mm}

\bibitem{TotalHoweII} S. Kriz. Howe duality over finite fields II: Explicit stable computation, preprint, 2025. 
Available at https://arxiv.org/abs/2506.22983


\vspace{5mm}


\bibitem{Kudla} S. S. Kudla. On the local theta-correspondence.
{\em Invent. Math.}, 83 (1986), 229-255.


\vspace{5mm}

\bibitem{KudlaMillson} S. S. Kudla, J.J. Millson. The theta correspondence and harmonic forms. I, {\em Math. Ann.}, 274 (1986), 353-378.

\vspace{5mm}

\bibitem{LarsenTiep} M. Larsen, P.H. Tiep. Uniform character bounds for finite classical groups,
{\em Ann. of Math.} (2) 200 (2024), no. 1, 1–70.

\vspace{5mm}

\bibitem{LiuWang} D. Liu, Z. Wang.
Remarks on the theta correspondence over finite fields. {\em Pacific J. Math.} 306 (2020),
587-609.

\vspace{5mm}

\bibitem{LusztigSymbols} G. Lusztig.
Irreducible representations of finite classical groups.
{\em Invent. Math.} 43 (1977), 125-175.

\vspace{5mm}

\bibitem{Lusztig} G. Lusztig: {\em Characters of Reductive Groups over a Finite Field}.
Ann. of Math. Stud., 107
{\em Princeton University Press}, Princeton, NJ, 1984, xxi+384 pp.


\vspace{5mm}


\bibitem{LDLDisconn} G. Lusztig. On the representations of reductive groups
with disconnected center, {\em Ast\'{e}risque}, 168 (1988), 157-166.

\vspace{5mm}

\bibitem{QuantumCodes} F. Montealegre-Mora, D. Gross. Rank-deficient representations in the theta correspondence over finite fields arise from quantum codes. {\em Rep. Theory}, 25 (2021), 193-223.

\vspace{5mm}

\bibitem{Moeglin} C. Moeglin, M.-F. Vignéras, J.-L. Waldspurger: {\em 
Correspondances de Howe sur un corps
p-adique}. Lecture Notes in Math., 1291, Springer-Verlag, 1987.

\vspace{5mm}

\bibitem{Pan1} S.-Y. Pan. Howe correspondence of unipotent characters for a finite symplectic/even-orthogonal dual pair. {\em Am. J. Math.}, John Hopkins Univ. Press, 146 (2024), 813-869.


\vspace{5mm}

\bibitem{Pan2} S.-Y. Pan. Lusztig correspondence and Howe correspondence for finite reductive dual pairs. {\em Math. Ann.} 390 (2024), 4657-4699.

\vspace{5mm}

\bibitem{Prasad} D. Prasad, Weil representation, Howe duality, and the theta correspondence,
In: {\em Theta Functions:
From the Classical to the Modern, CRM Proc. Lecture Notes}, 1, Amer. Math. Soc.,
Providence, RI, 1993, pp. 105-127.

\vspace{5mm}


\bibitem{Srinivasan}
B. Srinivasan. Weil representations of classical groups. {\em Invent. Math.}, 51 (1979), 143-153.

\vspace{5mm}

\bibitem{Waldspurger} J.-L. Waldspurger:
D\'{e}monstration d'une conjecture de dualit\'{e} de Howe dans le cas p-adique, $p\neq 2$. {\em
Festschrift in Honor of I. I. Piatetski-Shapiro on the Occasion of His Sixtieth Birthday, Part I, Israel Math. Conf. Proc.}, 2 (1990), 267-324
\vspace{5mm}


\bibitem{WeilOriginalReference} A. Weil. 
Sur certains groupes d'operateurs unitaires, {\em Acta Math.} 111 (1964), 143-211.



\end{thebibliography}
\end{document}